\newtheorem{theorem}{Theorem}[section]
\newtheorem{lemma}[theorem]{Lemma}
\newtheorem{definition}[theorem]{Definition}
\newtheorem{proposition}[theorem]{Proposition}
\newtheorem{corollary}[theorem]{Corollary}
\DeclareMathOperator{\Sp}{Sp}
\DeclareMathOperator{\im}{Im}
\DeclareMathOperator{\coim}{Coim}
\DeclareMathOperator{\Tor}{Tor}
\DeclareMathOperator{\Der}{Der}
\DeclareMathOperator{\gr}{gr}
\DeclareMathOperator{\ac}{ac}
\DeclareMathOperator{\Loc}{Loc}
\DeclareMathOperator{\Sym}{Sym}
\DeclareMathOperator{\rig}{rig}
\begin{document}
\title[Completed tensors and a global approach to $\wideparen{\mathcal{D}}$]{Completed tensor products and a global approach to $p$-adic analytic differential operators}
\author{Andreas Bode}
\address{Mathematical Institute\\University of Oxford\\Oxford OX2 6GG}
\email{andreas.bode@maths.ox.ac.uk}

\maketitle
\begin{abstract}
Ardakov--Wadsley defined the sheaf $\wideparen{\mathcal{D}}_X$ of $p$-adic analytic differential operators on a smooth rigid analytic variety by restricting to the case where $X$ is affinoid and the tangent sheaf admits a smooth Lie lattice. We generalize their results by dropping the assumption of a smooth Lie lattice throughout, which allows us to describe the sections of $\wideparen{\mathcal{D}}$ for arbitrary affinoid subdomains and not just on a suitable base of the topology. The structural results concerning $\wideparen{\mathcal{D}}$ and coadmissible $\wideparen{\mathcal{D}}$-modules can then be generalized in a natural way.\\
The main ingredient for our proofs is a study of completed tensor products over normed $K$-algebras, for $K$ a discretely valued field of mixed characteristic. Given a normed right module $U$ over a normed $K$-algebra $A$, we provide several exactness criteria for the functor $U\widehat{\otimes}_A-$ applied to complexes of strict morphisms, including a necessary and sufficient condition in the case of short exact sequences. 
\end{abstract}

\section{Introduction}
Let $K$ be a discretely valued field of mixed characteristic $(0,\  p)$ with discrete valuation ring $R$ and uniformizer $\pi$. Let $X=\Sp A$ be a smooth rigid analytic affinoid $K$-variety, and denote by $\mathcal{A}$ an affine formal model of $A$, i.e. the unit ball inside $A$ with respect to some chosen residue norm (see \cite[3.1]{Ardakov1}).\\
In \cite{Ardakov1}, Ardakov--Wadsley define the sheaf $\wideparen{\mathcal{D}}_X$ of $p$-adic analytic differential operators on $X$. Their construction is local in the sense that they determine sections only over a certain base of the weak topology, obtaining $\wideparen{\mathcal{D}}_X$ in the general case via the usual extension and glueing procedures. The affinoid subdomains $U=\Sp B$ forming this base are precisely those for which there exists a \emph{smooth Lie lattice} inside the tangent sheaf, i.e. for some affine formal model $\mathcal{B}$ of $B$, there exists a $\mathcal{B}$-lattice inside $\mathcal{T}(U)$ which is closed under the Lie bracket and finitely generated projective over $\mathcal{B}$. Accordingly, the results given in \cite{Ardakov1} concerning the algebraic structure of $\wideparen{\mathcal{D}}_X(X)$ are formulated under the assumption that $X$ itself admits a smooth Lie lattice.\\
In this paper, we present a \emph{global} approach to the sheaf $\wideparen{\mathcal{D}}_X$, by which we mean the following: given a smooth affinoid variety $X$, we define $\wideparen{\mathcal{D}}_X$ as a sheaf on the weak topology by explicitly giving the sections $\wideparen{\mathcal{D}}_X(U)$ for \emph{any} affinoid subdomain $U\subseteq X$, and establish the main results from \cite{Ardakov1} without assuming the existence of a smooth Lie lattice.\\

\begin{theorem}
\label{Mainresult}
Let $X$ be a smooth affinoid $K$-variety. Then $\wideparen{\mathcal{D}}_X$ is a sheaf on $X_w$ with vanishing higher \v{C}ech cohomology (generalizing \cite[Theorem A]{Ardakov1}), and for any affinoid subdomain $U\subseteq X$, the following holds:
\begin{enumerate}[(i)]
\item $\wideparen{\mathcal{D}}_X(U)=\wideparen{U(\mathcal{T}_X(U))}$ in the notation of \cite{Ardakov1}.
\item $\wideparen{\mathcal{D}}_X(U)$ is a two-sided Fr\'echet--Stein $K$-algebra (generalizing \cite[Theorem 6.4]{Ardakov1}).
\item The restriction morphism $\wideparen{\mathcal{D}}_X(X)\to \wideparen{\mathcal{D}}_X(U)$ is c-flat on both sides (generalizing \cite[Theorem 7.7]{Ardakov1}).
\end{enumerate}
\end{theorem}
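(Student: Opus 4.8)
The plan is to reduce the global statement to the local, smooth-Lie-lattice case treated in \cite{Ardakov1}, using the completed tensor product machinery advertised in the abstract as the essential new tool.

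First I would establish the key local-to-global comparison. For an affinoid subdomain $U = \Sp B \subseteq X$, the group $\wideparen{U(\mathcal{T}_X(U))}$ is defined (following \cite{Ardakov1}) as an inverse limit over affine formal models $\mathcal{B}$ of $B$ and over $n$ of the $\pi$-adic completions of enveloping algebras $U(\pi^n \mathcal{B}\text{-lattices in }\mathcal{T})$; the content of (i) is that this construction, applied to an arbitrary $U$, agrees with the sheafification of the local construction. The crucial point is a base-change formula: if $U' = \Sp B' \subseteq U$ is a further affinoid subdomain (possibly one admitting a smooth Lie lattice), then $B' \widehat{\otimes}_B \wideparen{U(\mathcal{T}_X(U))} \cong \wideparen{U(\mathcal{T}_X(U'))}$. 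This is exactly the kind of statement the completed-tensor-product exactness criteria are designed to prove: one writes the enveloping algebra level by level, uses that $\mathcal{T}_X$ is locally free, and checks that the relevant Rees-type presentations consist of strict maps so that $B' \widehat{\otimes}_B -$ is exact on them; then one passes to the inverse limit. Since every affinoid $X$ admits an admissible cover by affinoids carrying smooth Lie lattices (this is where smoothness of $X$ enters, via the fact that $\mathcal{T}_X$ is locally free and one can shrink to polydiscs), the presheaf $U \mapsto \wideparen{U(\mathcal{T}_X(U))}$ agrees on such a cover with Ardakov--Wadsley's sheaf, and the base-change formula upgrades this to an isomorphism of presheaves on all of $X_w$, giving both the sheaf property, the vanishing of higher \v{C}ech cohomology, and (i) simultaneously.

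For (ii), once (i) is known, $\wideparen{\mathcal{D}}_X(U) = \wideparen{U(\mathcal{T}_X(U))}$ is by construction a countable inverse limit of the Banach algebras $\wideparen{U(\pi^n \mathcal{B}, \mathcal{L})}_{K}$ (Ardakov--Wadsley's notation), and one must verify the Fréchet--Stein axioms: each transition map is a flat morphism of Noetherian Banach algebras with dense image, and each $\wideparen{U(\pi^n \mathcal{B}, \mathcal{L})}_K$ is (two-sided) Noetherian. Noetherianity follows from a filtered-ring argument — the associated graded of $\wideparen{U(\pi^n \mathcal{B}, \mathcal{L})}$ with respect to the order filtration is a finitely generated commutative algebra over $\mathcal{B}/\pi$, hence Noetherian — and this argument does not actually use that the chosen lattice $\mathcal{L}$ is projective, only that it is finitely generated, which is precisely the generalization we are after; the flatness of transition maps is then again a consequence of the completed-tensor-product exactness results applied to the filtered pieces. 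For (iii), c-flatness of $\wideparen{\mathcal{D}}_X(X) \to \wideparen{\mathcal{D}}_X(U)$ means that $\wideparen{\mathcal{D}}_X(U)\,\widehat{\otimes}_{\wideparen{\mathcal{D}}_X(X)} -$ is exact on coadmissible modules; by the Fréchet--Stein formalism this reduces to flatness at each finite level, i.e. flatness of $\wideparen{U(\pi^n\mathcal{B},\mathcal{L}_B)}_K$ over $\wideparen{U(\pi^n\mathcal{A},\mathcal{L}_A)}_K$ (for compatible choices of models and lattices), which is proved by the same graded/completed-tensor argument together with the fact that $B$ is flat over $A$.

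The main obstacle I anticipate is the base-change isomorphism $B'\widehat{\otimes}_B \wideparen{U(\mathcal{T}_X(U))}\cong\wideparen{U(\mathcal{T}_X(U'))}$ in full generality, i.e. without a smooth Lie lattice on $U$. Without projectivity of the lattice, the enveloping algebra $U(\mathcal{L})$ is no longer a nice PBW-type algebra with a free associated graded module, so the naive argument that identifies $\widehat{\otimes}$ with a completion of an ordinary tensor product breaks down; one has to control $\Tor$-terms and, more delicately, the strictness of the morphisms appearing in a presentation of $U(\mathcal{L})$ as a quotient of a free algebra — strictness is exactly the hypothesis needed to apply the exactness criteria for $U\widehat{\otimes}_A-$, and verifying it requires a careful norm estimate on enveloping algebras of (merely finitely generated, non-projective) Lie lattices. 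I expect this to be where the bulk of the technical work lies, and where the new completed-tensor-product theory does its real job; the remaining deductions of (i)--(iii) should then be relatively formal once the base-change formula and the unconditional Noetherianity are in hand.
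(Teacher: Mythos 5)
Your overall architecture is aligned with the paper's: define level-$n$ sheaves by completed tensor products, control exactness via bounded $\pi$-torsion of $\Tor$-groups, and pass to the inverse limit. You also correctly identify the central technical obstacle — without a projective Lie lattice, one must control $\Tor^{\mathcal{A}^\circ}_s(\mathcal{U},-)$ rather than assume flatness — and your observation that Noetherianity of $\widehat{U(\pi^n\mathcal{L})}_K$ only needs Rinehart's surjection $\Sym\mathcal{L}\twoheadrightarrow\gr U(\mathcal{L})$, not the isomorphism, is exactly right and matches the paper. However, there are two genuine gaps.

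The more serious one is in your treatment of (ii). You propose proving flatness of the transition maps $\widehat{U(\pi^{n+1}\mathcal{L})}_K\to\widehat{U(\pi^n\mathcal{L})}_K$ ``as a consequence of the completed-tensor-product exactness results applied to the filtered pieces.'' This does not work, and it is not what the paper does. The completed-tensor machinery of Section 2 takes a \emph{fixed} normed $A$-module $U$ and controls the exactness of $U\widehat{\otimes}_A(-)$ on strict complexes of $A$-modules; it says nothing directly about flatness of one Banach algebra over another built from the same non-projective lattice. Moreover, if you tried to reduce to associated gradeds of the degree filtration, you would land on the question of whether $\Sym_{\mathcal{A}}(\pi^n\mathcal{L})$ is flat over $\Sym_{\mathcal{A}}(\pi^{n+1}\mathcal{L})$, which again runs into the same non-projectivity obstruction you set out to avoid. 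The paper instead invokes Emerton's flatness criterion \cite[Proposition 5.3.10]{Emerton}, applied to the auxiliary filtration $F'_iU=U_1\cdot F_iU$ on $U=\im(U(\mathcal{L})\to U_A(L))$; the point is that $\gr'U$ is finitely generated over $U_1$ by \emph{central} elements, a condition checkable without any projectivity. This is a different tool entirely, and your proposal has no substitute for it. Relatedly, you never address $\pi$-adic separatedness of $U_n$, which the paper must establish (Lemmas \ref{freesep} and \ref{generalsep}, by restriction to a cover on which $\mathscr{L}$ becomes free) in order for the ``norm with unit ball $U_n$'' to actually be a norm.

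The second gap is in the sheaf and cohomology claims. You assert that agreement with Ardakov--Wadsley's sheaf on a covering by affinoids with smooth Lie lattices, together with a base-change formula $B'\widehat{\otimes}_B\wideparen{U(\mathcal{T}_X(U))}\cong\wideparen{U(\mathcal{T}_X(U'))}$, yields the sheaf property, vanishing \v{C}ech cohomology, and (i) ``simultaneously.'' This conflates two statements: extension by agreement on a base gives uniqueness and the sheaf axiom, but vanishing of higher \v{C}ech cohomology for an \emph{arbitrary} affinoid covering of $X$ is a strictly stronger assertion that does not follow formally. The paper proves it directly by applying Theorem \ref{fullcomplexstrict} to the augmented \v{C}ech complex of $\mathcal{O}_X$ for a $\pi^n\mathcal{L}$-accessible covering; the decisive input is Lemma \ref{torsionofsubdom}, which bounds the $\pi$-torsion of $\Tor^{\mathcal{A}}_s(\mathcal{B},U_{\mathcal{A}}(\mathcal{L}))$ by an induction on the number of accessibility steps, exploiting the explicit presentations $0\to\mathcal{C}\langle t\rangle\xrightarrow{u_i}\mathcal{C}\langle t\rangle\to\mathcal{C}_i\to 0$. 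Your sketch gestures at ``Rees-type presentations'' but this inductive control of $\Tor$ along Laurent/rational localizations is precisely the missing argument, and it must be done at the integral level, not merely after inverting $\pi$.
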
 

The notions alluded to in the Theorem will be defined in the main body of the text.\\
\\
In a similar fashion, we discuss coadmissible $\wideparen{\mathcal{D}}_X$-modules, the natural analogues of coherent modules in this setting, and generalize the results from \cite{Ardakov1}.

\begin{theorem}
\label{Mainresultmodule}
Let $X$ be a smooth affinoid $K$-variety, and let $\mathcal{M}$ be a coadmissible $\wideparen{\mathcal{D}}_X$-module. Then the following holds:
\begin{enumerate}[(i)]
\item $M=\mathcal{M}(X)$ is a coadmissible $\wideparen{\mathcal{D}}_X(X)$-module, and $\mathcal{M}\cong \Loc M$ (generalizing \cite[Theorem 8.4]{Ardakov1}).
\item $\mathcal{M}$ is a sheaf on $X_w$ with vanishing higher \v{C}ech cohomology (generalizing \cite[Theorem 8.2]{Ardakov1}).
\end{enumerate}
\end{theorem}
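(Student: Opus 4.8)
The plan is to treat the two parts together, first establishing the good behaviour of the localisation functor on coadmissible modules over $\wideparen{\mathcal{D}}_X(X)$, and then bootstrapping from the local definition of a coadmissible $\wideparen{\mathcal{D}}_X$-module to the global assertions. Write $A=\wideparen{\mathcal{D}}_X(X)$; by Theorem \ref{Mainresult}(ii) this is a two-sided Fr\'echet--Stein algebra, so we fix a presentation $A=\varprojlim_n A_n$ with the $A_n$ Noetherian Banach $K$-algebras and flat transition maps, and recall that a coadmissible $A$-module is then $M=\varprojlim_n M_n$ with $M_n=A_n\otimes_{A_{n+1}}M_{n+1}$ finitely generated and with dense transition maps. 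For an affinoid subdomain $U=\Sp B\subseteq X$, Theorem \ref{Mainresult}(i) identifies $\wideparen{\mathcal{D}}_X(U)=\wideparen{U(\mathcal{T}_X(U))}$, with a compatible Fr\'echet--Stein presentation $\varprojlim_n B_n$ built from the enveloping algebras of a lattice in $\mathcal{T}_X(U)$, and Theorem \ref{Mainresult}(iii) says the restriction map $A\to\wideparen{\mathcal{D}}_X(U)$ is c-flat. We define $\Loc M$ on $X_w$ by $U\mapsto \wideparen{\mathcal{D}}_X(U)\,\widehat{\otimes}_A M$, the coadmissible tensor product; c-flatness makes this coadmissible over $\wideparen{\mathcal{D}}_X(U)$, and $(\Loc M)(X)=M$.

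The first step is to prove that $\Loc M$ is a sheaf on $X_w$ with vanishing higher \v{C}ech cohomology. I would show that for every finite admissible affinoid cover $\{V_j\}$ of an affinoid subdomain $U\subseteq X$ the augmented \v{C}ech complex of $\Loc M$ is exact, by reducing to level $n$ and passing to the limit. At level $n$ one identifies $B_{n,V_j}\otimes_{A_n}M_n$ with the sections over $V_j$ of a coherent module attached to $M_n$, and wants exactness of the level-$n$ \v{C}ech complex from Tate's acyclicity theorem and a Kiehl-type glueing argument; this is exactly where the completed tensor product machinery of the abstract part enters, since one must know that $\wideparen{\mathcal{D}}_X(V_j)\,\widehat{\otimes}_A(-)$ and its finite-level avatars stay exact on the (strict) \v{C}ech complex. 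Here the necessary and sufficient exactness criterion for short exact sequences, extended to complexes of strict morphisms, supplies the base-change input that in \cite{Ardakov1} is available only under a smooth Lie lattice. One then takes $\varprojlim_n$: by coadmissibility the transition systems are essentially surjective, hence Mittag--Leffler, so $\varprojlim_n$ is exact on the level-$n$ \v{C}ech complexes and commutes with the formation of the limit complex, giving exactness of the \v{C}ech complex of $\Loc M$; in particular $(\Loc M)(V)=\wideparen{\mathcal{D}}_X(V)\,\widehat{\otimes}_A M$ for every affinoid subdomain $V\subseteq U$, compatibly with restriction.

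Now let $\mathcal{M}$ be a coadmissible $\wideparen{\mathcal{D}}_X$-module and $\{U_i\}$ an admissible affinoid cover with $M_i:=\mathcal{M}(U_i)$ coadmissible over $\wideparen{\mathcal{D}}_X(U_i)$ and $\mathcal{M}|_{U_i}\cong\Loc(M_i)$. Applying the first step over each $U_i$ shows $\mathcal{M}|_{U_i}$ is a sheaf with vanishing higher \v{C}ech cohomology; since the sheaf axioms are local, $\mathcal{M}$ is a sheaf on $X_w$, and vanishing of higher \v{C}ech cohomology on $X$ follows by refining an arbitrary cover to be subordinate to $\{U_i\}$ and combining acyclicity over the $U_i$ and over their affinoid intersections via the \v{C}ech-to-derived functor spectral sequence; this gives (ii). For (i), put $M:=\mathcal{M}(X)$; the sheaf property presents $M$ as the kernel of $\prod_i M_i\to\prod_{i,k}\mathcal{M}(U_i\cap U_k)$, and by the first step applied over $U_i$ each $\mathcal{M}(U_i\cap U_k)=\wideparen{\mathcal{D}}_X(U_i\cap U_k)\,\widehat{\otimes}_{\wideparen{\mathcal{D}}_X(U_i)}M_i$ is coadmissible over $\wideparen{\mathcal{D}}_X(U_i\cap U_k)$. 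Since restriction of scalars along the c-flat restriction maps of $\wideparen{\mathcal{D}}_X$ preserves coadmissibility (as in \cite{Ardakov1}), all these terms are coadmissible $A$-modules, and as such modules form an abelian category closed under kernels, $M$ is coadmissible. Finally the canonical map $\Loc M\to\mathcal{M}$ is an isomorphism because its value on each $U_i$ is the map $\wideparen{\mathcal{D}}_X(U_i)\,\widehat{\otimes}_A M\to M_i$, which is an isomorphism by c-flatness together with the kernel description of $M$.

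The main obstacle I expect is the first step, specifically controlling $\wideparen{\mathcal{D}}_X(U)\,\widehat{\otimes}_A(-)$ on \v{C}ech complexes without a smooth Lie lattice. In \cite{Ardakov1} the level-$n$ objects are honest coherent sheaves on a smooth formal model, so flat base change and Tate's theorem apply immediately; here the level-$n$ enveloping algebras are built from a lattice in the tangent sheaf that is only coherent, not finitely generated projective, and the relevant restriction maps need not be flat. Verifying the hypotheses of the abstract exactness criteria --- strictness of the \v{C}ech differentials and the required density and flatness conditions at each finite level --- uniformly in $n$, so that the passage to the inverse limit is legitimate, is the delicate technical core of the argument.
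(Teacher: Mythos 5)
Your overall architecture (first establish sheafiness and acyclicity of $\Loc M$ level by level via the Banach algebras $\mathscr{U}_n$, then pass to the limit using Mittag--Leffler, then deduce the glueing statement) is indeed the paper's architecture, and you correctly put your finger on where the completed tensor product machinery earns its keep: replacing the smooth--Lie--lattice argument for acyclicity of $\mathscr{U}_n$ itself. At level $n$ the paper does this not by invoking Tate's theorem directly for a ``coherent module attached to $M_n$'' but by dimension shifting: reduce to finitely generated $\mathscr{U}_n(X)$-modules, take a presentation $0\to N'\to \mathscr{U}_n(X)^{\oplus k}\to N\to 0$, tensor the \v{C}ech complex (flatness of $\mathscr{U}_n(Y)$ over $\mathscr{U}_n(X)$, Theorem~\ref{Dnflat}), and iterate using acyclicity of $\mathscr{U}_n$ (Theorem~\ref{sheafDn}). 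Your description is close enough in spirit that this is a presentational difference, not a gap.

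There is, however, a genuine gap in your argument for part (i). You deduce coadmissibility of $M=\mathcal{M}(X)$ from closure of coadmissible $A$-modules ($A=\wideparen{\mathcal{D}}_X(X)$) under kernels, after asserting that ``restriction of scalars along the c-flat restriction maps of $\wideparen{\mathcal{D}}_X$ preserves coadmissibility.'' This is not true, and it is not a fact available from \cite{Ardakov1}. The restriction map $A\to \wideparen{\mathcal{D}}_X(U_i)$ does not make $\wideparen{\mathcal{D}}_X(U_i)$ a coadmissible (let alone finitely generated) $A$-module, so a coadmissible $\wideparen{\mathcal{D}}_X(U_i)$-module $M_i$ has no reason to be coadmissible over $A$ by restriction of scalars, and your appeal to the abelian category of coadmissible $A$-modules being closed under kernels therefore does not apply to the terms of your equalizer diagram. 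The correct route, which is what \cite[Theorem 8.4]{Ardakov1} does and which the paper simply cites (observing that the needed inputs --- flatness of $\mathscr{U}_n(Y)$ over $\mathscr{U}_n(X)$ and acyclicity of $\mathscr{U}_n$ --- are now available without a smooth Lie lattice), is to work at each finite level $n$: the level-$n$ \v{C}ech complex consists of \emph{finitely generated} modules over the \emph{Noetherian Banach} algebras $\mathscr{U}_n(-)$, so Kiehl-style descent shows $\mathcal{M}_n(X)$ is a finitely generated $\mathscr{U}_n(X)$-module, and one then verifies the coadmissibility conditions for the inverse system $\bigl(\mathcal{M}_n(X)\bigr)_n$. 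The final isomorphism $\Loc M\cong\mathcal{M}$ also really comes from this level-$n$ identification rather than ``c-flatness together with the kernel description,'' which on its own is circular.

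Your worry about restriction maps ``need not be flat'' at finite level is unfounded: Theorem~\ref{Dnflat} establishes flatness of $\mathscr{U}_n(Y)$ over $\mathscr{U}_n(X)$ without any smoothness hypothesis on $\mathcal{L}$, and this flatness is precisely what lets the dimension-shifting argument run. The delicate input is, as you say, the acyclicity of $\mathscr{U}_n$, which the paper extracts from Theorem~\ref{fullcomplexstrict} via Lemma~\ref{torsionofsubdom}.
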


We believe these generalizations to be valuable for the following reason. The assumption of a smooth Lie lattice could be viewed as an additional smoothness condition for a chosen formal $R$-model of $X$. In this sense, our approach seems cleaner, as it only needs to take into account the geometry over $K$ -- to obtain a satisfactory $\mathcal{D}$-module theory in rigid analytic geometry, we require smoothness over $K$, and not smoothness over $K$ plus additional smoothness properties over $R$.\\
\\
The main ingredient in our proofs is a detailed study of completed tensor products over $K$-algebras, given in section 2. To be precise, we let $A$ be a normed $K$-algebra and $U$ a normed right $A$-module which is flat as an abstract $A$-module. We investigate the effect of the functor $U\widehat{\otimes}_A-$ on complexes of normed $A$-modules, consisting of strict morphisms. Exactness properties can be fully described in terms of the corresponding tensor products of unit balls and their $\pi$-torsion.

\begin{proposition}
\label{Mainresultcompltensor}
Let $(C^{\bullet}, \partial)$ be an exact cochain complex of left normed $A$-modules with strict morphisms, which is bounded above. 
\begin{enumerate}[(i)]
\item If the unit ball $U^\circ$ is flat over $A^\circ$, then $U\widehat{\otimes}_A C^\bullet$ is also exact.
\item More generally, if $\Tor_s^{A^\circ}(U^\circ, (C^i)^\circ)$ has bounded $\pi$-torsion for each $s\geq 0$ and each $i$, then $U\widehat{\otimes}_A C^\bullet$ is exact.
\end{enumerate}
\end{proposition}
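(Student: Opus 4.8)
The plan is to reduce to the case of a single short exact sequence — for which, as the abstract announces, a sharper criterion is available — and then to treat that case by passing to unit balls and $\pi$-adic completions over $A^\circ$ and finally inverting $\pi$.

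\emph{Reduction to short exact sequences.} Since $C^\bullet$ is exact with strict differentials, put $Z^i:=\ker\partial^i=\im\partial^{i-1}$ with the subspace norm from $C^i$; then $0\to Z^i\to C^i\to Z^{i+1}\to 0$ is a strict short exact sequence, and splicing these recovers $C^\bullet$. If $U\widehat{\otimes}_A-$ carries each of these to a short exact sequence, then $U\widehat{\otimes}_A C^\bullet$ is exact: the map $U\widehat{\otimes}_A C^i\to U\widehat{\otimes}_A C^{i+1}$ factors as a surjection onto $U\widehat{\otimes}_A Z^{i+1}$ followed by an injection, and the resulting kernel/image identifications match in every degree. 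So it suffices to treat one strict short exact sequence $0\to L\to M\to N\to 0$. For part (ii) I first note that $\Tor^{A^\circ}_s(U^\circ,(Z^i)^\circ)$ still has bounded $\pi$-torsion: this follows by downward induction on $i$, starting from the top degree $m$ (where $Z^m=C^m$, using boundedness above) and propagating along $0\to (Z^i)^\circ\to (C^i)^\circ\to (Z^{i+1})^\circ\to Q^{i+1}\to 0$ — in which $Q^{i+1}$ is bounded $\pi$-torsion by strictness of $\partial^i$ — via the long exact $\Tor$ sequences, using that extensions and subquotients of bounded-$\pi$-torsion modules are again of that type. Since only finitely many $Z^i$ intervene when checking exactness in a fixed degree, uniformity of the bounds is not an issue; for part (i) there is nothing to propagate, as flatness of $U^\circ$ over $A^\circ$ already settles the short exact sequence case.

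\emph{The short exact sequence case.} Using the comparison results of Section~2, I identify $U\widehat{\otimes}_A(-)$ applied to $0\to L\to M\to N\to 0$ with $K\otimes_R\widehat{\bar D}^\bullet$, where $\bar D^i$ is $U^\circ\otimes_{A^\circ}(-)^\circ$ modulo its $\pi$-power-torsion submodule (so each $\bar D^i$ is $R$-flat) and $\widehat{(-)}$ is termwise $\pi$-adic completion; passing to this torsion-free quotient is harmless because the torsion is bounded (it is the torsion of $\Tor_0^{A^\circ}(U^\circ,(-)^\circ)$) and $K\otimes_R-$ annihilates it. Next I bound the cohomology of $\bar D^\bullet$: the unit-ball sequence $0\to L^\circ\to M^\circ\to N^\circ\to 0$ is exact up to a bounded-$\pi$-torsion cokernel $Q$, and applying $U^\circ\otimes_{A^\circ}-$ and chasing the long exact $\Tor$ sequences attached to $L^\circ,M^\circ,N^\circ$ and $Q$ (all with bounded $\pi$-torsion; in case (i) one just uses exactness of $U^\circ\otimes_{A^\circ}-$) shows that $H^i(U^\circ\otimes_{A^\circ}(-)^\circ)$, and hence $H^i(\bar D^\bullet)$, is killed by a fixed power of $\pi$ in each of the three degrees. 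Here one uses that $\Tor_s^{A^\circ}(U^\circ,-)$ for $s\ge 1$ is automatically $\pi$-power torsion on these modules — inverting $\pi$ turns it into $\Tor_s^{A}(U,-)=0$ by abstract flatness of $U$ — so the real input of hypothesis (ii) is exactly that this torsion is \emph{bounded}.

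\emph{Completion and conclusion.} It remains to show: if $E^\bullet$ is a bounded-above complex of $R$-flat modules with $H^i(E^\bullet)$ killed by a fixed $\pi^c$ for all $i$, then $H^i(\widehat E^\bullet)$ is killed by a fixed power of $\pi$. For $n\ge c$ the universal coefficient sequence $0\to H^i(E^\bullet)/\pi^n\to H^i(E^\bullet/\pi^n)\to H^{i+1}(E^\bullet)[\pi^n]\to 0$ (valid since $E^i$ is $R$-flat) exhibits $H^i(E^\bullet/\pi^n)$ as an extension of $H^{i+1}(E^\bullet)$ by $H^i(E^\bullet)$, hence killed by $\pi^{2c}$ uniformly in $n$; the transition maps of $(E^\bullet/\pi^n)_n$ are termwise surjective, so the Milnor sequence $0\to\varprojlim^1_n H^{i-1}(E^\bullet/\pi^n)\to H^i(\widehat E^\bullet)\to\varprojlim_n H^i(E^\bullet/\pi^n)\to 0$ shows $H^i(\widehat E^\bullet)$ is killed by $\pi^{4c}$, as $\varprojlim$ and $\varprojlim^1$ of $\pi^{2c}$-killed modules are $\pi^{2c}$-killed. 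Applying this to $E^\bullet=\bar D^\bullet$ and then $K\otimes_R-$ (exact, and annihilating $\pi$-power torsion) yields exactness of $K\otimes_R\widehat{\bar D}^\bullet=U\widehat{\otimes}_A(0\to L\to M\to N\to 0)$, which closes the short exact sequence case and hence the proof.

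\emph{Main obstacles.} The two delicate points are: (a) setting up correctly the comparison between $U\widehat{\otimes}_A M$ and the $\pi$-adic completion of $U^\circ\otimes_{A^\circ}M^\circ$, keeping in mind that the norms in play agree only up to commensurability and bounded torsion, which dissolves after inverting $\pi$ (this is the Section~2 machinery); and (b) the point — genuinely essential — that one needs \emph{boundedly} $\pi$-torsion cohomology, not merely $\pi$-power-torsion cohomology, since e.g. $\widehat{\bigoplus_k R/\pi^k}$ contains non-torsion elements, so a complex with such a cohomology group may fail to stay exact after completing and inverting $\pi$. This is precisely why the hypotheses in (ii) ask for bounded $\pi$-torsion in each $\Tor$, and why tracking the torsion bounds through the long exact sequences — which the short-exact-sequence reduction keeps finite in each degree — is the technical core.
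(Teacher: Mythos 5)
Your proof is correct and takes a genuinely different route from the paper. The paper's argument (Lemma \ref{sesstrictiff}, Proposition \ref{complexstrict}, Corollary \ref{flatunitballcomplex}, Theorem \ref{fullcomplexstrict}) establishes that under the stated hypotheses the complex $U\otimes_A C^{\bullet}$ consists of \emph{strict} morphisms, and then invokes Proposition \ref{strictcompletionexact}, the fact that completion of a strict exact sequence stays exact; the core technical device is the characterization in Lemma \ref{sesstrictiff} of strictness of a tensored short exact sequence in terms of an epimorphism condition on the induced maps of $\pi$-torsion submodules. You bypass the strictness machinery entirely: you identify $U\widehat{\otimes}_A(-)$ with $K\otimes_R$ of a termwise $\pi$-adic completion of $R$-flat lattices (via Lemma \ref{tensorunitball}, after killing the bounded torsion), and then control the effect of $\pi$-adic completion on cohomology through the universal-coefficient sequence modulo $\pi^n$ together with the Milnor $\varprojlim^1$-sequence, so that "cohomology killed by a fixed $\pi^c$" survives completion and then dies after inverting $\pi$. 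The torsion bookkeeping — the downward induction along long exact $\Tor$-sequences propagating bounded torsion from $(C^i)^\circ$ to $(\ker\partial^i)^\circ$ — is essentially identical to the inductive step in the paper's Theorem \ref{fullcomplexstrict}. What the paper's approach buys is a stronger intermediate output: strictness of $U\otimes_A C^{\bullet}$ and the isomorphism $U\widehat{\otimes}_A\mathrm{H}^j(C^{\bullet})\cong\mathrm{H}^j(U\widehat{\otimes}_A C^{\bullet})$ for complexes with non-vanishing cohomology, both of which are used verbatim in the sheaf-theoretic arguments of Section 4 (notably Theorem \ref{sheafDn}). What your approach buys is a more transparent view of the analytic content — the completed tensor product is literally the generic fiber of a $\pi$-adic completion — and a self-contained argument from standard homological algebra; your diagnosis of the key obstacle (that \emph{boundedly} $\pi$-torsion cohomology, not merely $\pi$-power-torsion, is what survives $\pi$-adic completion and the passage to the generic fiber) is exactly the right intuition and matches the role that $\mathcal{BT}$ and the quotient category $\mathcal{Q}$ play throughout Section 2.
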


The results in section 2 are given in greater generality, allowing for complexes with non-zero cohomology as well, but the general idea remains the same: we obtain exactness properties as soon as the unit ball $U^\circ$ is sufficiently close to being flat in the sense that all relevant Tor groups have bounded $\pi$-torsion.\\
\\
The connection to Theorem \ref{Mainresult} is the following. If the tangent sheaf $\mathcal{T}(X)$ admits a smooth $\mathcal{A}$-Lie lattice $\mathcal{L}$ over some affine formal model $\mathcal{A}$, then the enveloping algebra $U(\mathcal{L})$ is flat over $\mathcal{A}$. Writing $U=U(\mathcal{T}(X))$, $U^\circ=U(\mathcal{L})$, many of the arguments in \cite{Ardakov1} (e.g. Theorem 3.5, Lemma 3.6 and Proposition 4.3.c) therein) can thus be interpreted as using the smooth Lie lattice in order to invoke Proposition \ref{Mainresultcompltensor}.(i). We obtain our more general results by using Proposition \ref{Mainresultcompltensor}.(ii) instead. \\
These considerations will also play a crucial role in studying the \v{C}ech complex of coadmissible $\wideparen{\mathcal{D}}$-modules on proper $K$-spaces in our follow-up paper \cite{Bodeproper}.\\
\\
We briefly describe the structure of the paper.\\
In section 2, we recall elementary properties of completed tensor products and strict morphisms, before giving the exactness results alluded to above.\\
In section 3, we prove Theorem \ref{Mainresult}.(ii) in the general setting of a finitely generated projective Lie--Rinehart algebra over an affinoid algebra. The key idea here is a useful criterion for flatness established by Emerton \cite{Emerton}.\\
Section 4 applies the results from the previous sections in order to establish the main theorems. \\
\\
The results in this paper form part of the author's PhD thesis, which was produced under the supervision of Simon Wadsley. We would like to thank him for his encouragement and patience. We also thank Konstantin Ardakov for reading an earlier draft. 
\subsection*{Notation}
Throughout, $K$ is a discretely valued field of mixed characteristic $(0,\  p)$ with discrete valuation ring $R$ and uniformizer $\pi$.\\
Given a semi-normed $K$-vector space $V$, we denote by $V^\circ$ the unit ball of all elements in $V$ with semi-norm $\leq 1$. We define the value set of $V$ to be the set $|V|\setminus \{0\}$. In this way, the value set of $K$ is $|K^*|$, a discrete subset of $\mathbb{R}$ by assumption.\\
A normed $K$-algebra $A$ is always required to have a submultiplicative norm, i.e. $A^\circ$ is a subring. Similarly, a normed $A$-module is a normed $K$-vector space $M$ with an $A$-module structure satisfying $|am|\leq |a|\cdot |m|$ for all $a\in A$, $m\in M$. In particular, $M^\circ$ is an $A^\circ$-module.\\
We denote the completion of a semi-normed $K$-vector space $V$ by $\widehat{V}$. We also write $\widehat{M}$ for the $\pi$-adic completion of an $R$-module $M$, but it should always be clear from context which completion we are using. We sometimes shorten $\widehat{M}\otimes_R K$ to $\widehat{M}_K$.\\
If $i=(i_1, \dots, i_m)\in \mathbb{N}^m$ is a multi-index, we write $|i|=i_1+i_2+\dots + i_m$, and abbreviate the expression
\begin{equation*}
X_1^{i_1}X_2^{i_2} \dots X_m^{i_m}
\end{equation*}
to $X^i$.\\
Given an affinoid $K$-variety $X=\Sp A$, we let $X_w$ denote the weak Grothendieck topology (consisting of affinoid subdomains, with finite coverings by affinoid subdomains as coverings) and $X_{\rig}$ the strong Grothendieck topology (admissible open subspaces and admissible coverings, see \cite[Definition 5.1/4]{Bosch}).

\section{Completed tensor products and strict morphisms}
Throughout this section, $A$ will denote a normed (unital, not necessarily commutative) $K$-algebra. In particular, $A$ will contain a field with non-trivial valuation, implying that every normed $A$-module is a normed $K$-vector space in a natural way.\\
Moreover, we will assume for simplicity that $|A|\setminus \{0\}=|K^*|$.

\subsection{Definitions and basic properties}

Given a normed right $A$-module $M$ and a normed left $A$-module $N$, the tensor product $M\otimes_A N$ is equipped with a semi-norm given by
\begin{equation*}
||x||=\inf \ \{\max_i |m_i|\cdot |n_i|\},
\end{equation*}
where the infimum is taken over all expressions $x=\sum_i m_i\otimes n_i\in M\otimes N$. We call this the tensor product semi-norm on $M\otimes N$.

\begin{definition}[{see \cite[2.1.7]{BGR}}]
\label{comptensor}
Given a normed right $A$-module $M$ and a normed left $A$-module $N$, the \textbf{completed tensor product} of $M$ and $N$, written $M\widehat{\otimes}_A N$, is the completion of the semi-normed space $M\otimes_A N$ with respect to the tensor product semi-norm.
\end{definition}
A priori, $M\widehat{\otimes}_A N$ is just a Banach $K$-space, but it will naturally inherit the structure of a left  Banach $B$-module if $M$ is a normed $(B, A)$-bimodule for some normed $K$-algebra $B$. As usual, this gives the completed tensor product an $A$-module structure if $A$ is commutative -- as we will deal with non-commutative algebras later, we will be careful to formulate our results in full generality.\\ 
\\
We briefly describe the completed tensor product in terms of a universal property (see \cite[Appendix B]{Bosch}).\\
We endow $M\times N$ with the product semi-norm $|(m, n)|=|m|\cdot |n|$ and call a bounded $K$-linear map $\phi: M\times N\to E$ into a semi-normed $K$-vector space $E$ \textbf{$A$-balanced} if $\phi(ma, n)=\phi(m, an)$ for all $m\in M$, $n\in N$, $a\in A$. Then $M\otimes_A N$ and $M\widehat{\otimes}_A N$ can be characterized by the following universal properties.\\
If $E$ is a semi-normed $K$-vector space and $\phi: M\times N\to E$ is an $A$-balanced map, then there exists a unique bounded $K$-linear morphism $\theta: M\otimes_A N\to E$ such that $\phi$ factors as
\begin{equation*}
\begin{xy}
\xymatrix{
M\times N \ar[d]_{\iota} \ar[r]^{\phi} & E\\
M\otimes_A N \ar[ru]_{\theta} &
}
\end{xy}
\end{equation*}
where $\iota: M\times N \to M\otimes_A N$ is the canonical morphism $M\times N\to M\otimes_A N$. By the universal property of completions, the completed tensor product $M\widehat{\otimes}_A N$ satisfies the analogous universal property for $A$-balanced maps $M\times N\to E$, where $E$ is a \emph{Banach} $K$-vector space.\\
This immediately implies the functoriality of $\widehat{\otimes}$, and moreover shows that the canonical morphism
\begin{equation*}
M\widehat{\otimes}_A N\to \widehat{M}\widehat{\otimes}_A \widehat{N}
\end{equation*}
is an isomorphism of normed $K$-vector spaces (see \cite[Proposition 2.1.7/4]{BGR}).\\
\\
In studying the tensor product semi-norm on $M\otimes N$, it will be crucial to describe the unit ball in terms of the unit balls $M^\circ$, $N^\circ$.\\
We could not find an explicit reference for the following result, even though it is certainly well-known amongst experts.
\begin{lemma}
\label{tensorunitball}
Let $M$ be a normed right $A$-module, $N$ a normed left $A$-module such that $|M|\setminus\{0\}$ and $|N|\setminus\{0\}$ are discrete. Suppose that at least one of the two modules has value set equal to $|K^*|$. Then the unit ball of $M\otimes_A N$ under the tensor product semi-norm is the image of $M^\circ\otimes_{A^\circ} N^\circ$ under the canonical map.
\end{lemma}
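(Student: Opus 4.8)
The containment of the image of $M^\circ \otimes_{A^\circ} N^\circ$ inside the unit ball of $M\otimes_A N$ is immediate: if $m\in M^\circ$ and $n\in N^\circ$, then $\|m\otimes n\| \leq |m|\cdot|n| \leq 1$ by definition of the tensor product semi-norm, and unit balls are additive subgroups, so any finite sum $\sum m_i\otimes n_i$ with $m_i\in M^\circ$, $n_i\in N^\circ$ lands in the unit ball. The content is the reverse inclusion. So the plan is to take $x\in M\otimes_A N$ with $\|x\|\leq 1$ and produce an expression $x = \sum_j m_j\otimes n_j$ with all $m_j\in M^\circ$ and all $n_j\in N^\circ$.

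First I would fix a representation $x = \sum_{i=1}^r m_i \otimes n_i$ with $\max_i |m_i|\cdot|n_i|$ close to (but perhaps strictly greater than) $1$; here is where discreteness of the value sets enters, since it lets us assume after a small perturbation of the infimum that in fact $\max_i |m_i|\cdot|n_i|\leq 1$ — the value set of $M\otimes_A N$ is contained in the (discrete) set of products $|M^*|\cdot|N^*|$, so the infimum defining $\|x\|$ is attained and equals some value $\leq 1$. Now say WLOG (by the hypothesis) that $|N|\setminus\{0\} = |K^*|$. Then for each $i$ choose $\lambda_i \in K^*$ with $|\lambda_i| = |n_i|$ if $n_i\neq 0$ (and drop the term if $n_i = 0$); set $n_i' = \lambda_i^{-1} n_i \in N^\circ$, which has norm $|n_i'| = 1$, and $m_i' = \lambda_i m_i$, so that $m_i\otimes n_i = m_i'\otimes n_i'$ and $|m_i'| = |\lambda_i|\cdot|m_i| = |m_i|\cdot|n_i| \leq 1$, i.e.\ $m_i'\in M^\circ$. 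Then $x = \sum_i m_i'\otimes n_i'$ is the desired expression.

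The only real subtlety — and the step I expect to need the most care — is the passage from ``$\|x\| \leq 1$'' to ``there is a representation with $\max_i |m_i|\cdot|n_i| \leq 1$''. The infimum in the definition of $\|x\|$ need not a priori be attained by a single representation, so one argues as follows: the set of all values $\max_i |m_i|\cdot|n_i|$, over representations of $x$, is a subset of $\{\,|m|\cdot|n| : m\in M, n\in N\,\} \subseteq |M|\cdot|N|$ together with $\{0\}$, which by the discreteness hypotheses on $|M|\setminus\{0\}$ and $|N|\setminus\{0\}$ is a discrete subset of $\mathbb{R}_{\geq 0}$; hence this set of attained values has a minimum, which therefore equals $\|x\|\leq 1$, and a representation realizing it is the one to feed into the rescaling argument above. (If $x = 0$ the statement is trivial.) One should note that the hypothesis that at least one of $|M|\setminus\{0\}$, $|N|\setminus\{0\}$ equals $|K^*|$ is exactly what makes the rescaling scalars $\lambda_i$ available in $K^*$; without it one could only rescale within $|M^*|$ or $|N^*|$, which need not contain the needed values. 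This completes the argument.
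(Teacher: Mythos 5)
Your proof is essentially the same as the paper's: use discreteness to find a representation $x=\sum m_i\otimes n_i$ with each $|m_i|\cdot|n_i|\leq 1$, then use the value-set-$=|K^*|$ hypothesis to rescale each elementary tensor by a scalar in $K^*$ so that both factors land in the unit balls. However, there is a flaw in your discreteness step that you should fix. You assert that
\begin{equation*}
\{\,|m|\cdot|n| : m\in M,\ n\in N\,\}
\end{equation*}
is discrete ``by the discreteness hypotheses on $|M|\setminus\{0\}$ and $|N|\setminus\{0\}$''. This does not follow from discreteness alone: for instance if $|M|\setminus\{0\}=\{2^a : a\in\mathbb{Z}\}$ and $|N|\setminus\{0\}=\{3^b : b\in\mathbb{Z}\}$, the product set is dense in $\mathbb{R}_{>0}$. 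You need the hypothesis that at least one value set equals $|K^*|$ already at this point. The paper handles it by assuming, say, $|M|\setminus\{0\}=|K^*|$ and then observing that every product $|m|\cdot|n|$ with $m\neq 0$ equals $|\pi^k n|$ for some $k\in\mathbb{Z}$, hence the entire product set is simply $|N|\setminus\{0\}$, which is discrete by hypothesis. In other words, the $|K^*|$ hypothesis is doing double duty (discreteness of the set of attainable values and availability of rescaling scalars), whereas you invoke it only for the rescaling. Once this is corrected, your argument agrees with the paper's, with the inessential difference that you normalize on the $N$ side rather than the $M$ side.
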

\begin{proof}
Suppose without loss of generality that $|M|\setminus\{0\}=|K^*|$, then
\begin{equation*}
\{x\in \mathbb{R}_{>0}: \ x=|m|\cdot |n|, \ m\in M, \ n\in N\}=|N|\setminus \{0\}
\end{equation*}
is discrete. Let $x\in M\otimes N$ be in the unit ball. Then there exists an expression $x=\sum m_i\otimes n_i$ such that $|m_i|\cdot |n_i|\leq 1$ for all $i$ -- if $|x|<1$, this follows trivially from the definition of the semi-norm, and if $|x|=1$, discreteness implies that we can replace the infimum in the definition of $|x|$ by a minimum.\\
Obviously we can assume $m_i\neq 0$ for each $i$, and hence $|m_i|>0$.\\
We will now show that for each $i$, there exists some integer $k_i\in \mathbb{Z}$ such that $|\pi^{-k_i}m_i|\leq 1$ and $|\pi^{k_i}n_i|\leq 1$, and thus
\begin{equation*}
x=\sum \pi^{-k_i}m_i\otimes \pi^{k_i}n_i
\end{equation*}
is in the image of $M^\circ\otimes N^\circ$.\\
For each $i$, $|m_i|>0$ implies that there exists some integer $k_i\in \mathbb{Z}$ such that $|m_i|=|\pi^{k_i}|$ by assumption on the value set of $M$. Thus $|\pi^{-k_i}m_i|=1$, and 
\begin{equation*}
|\pi^{k_i}n_i|=|m_i|\cdot |n_i|\leq 1.
\end{equation*}
The result follows.
\end{proof}
Recall (see \cite[Lemma 2.2]{SchneiderNFA}) that if $V$ is a normed $K$-vector space, then the norm on $V$ is equivalent to the \textbf{gauge norm} associated to the unit ball $V^\circ$, given by
\begin{equation*}
|v|_{\mathrm{gauge}}= \inf_{\substack{a\in K \\x\in aV^\circ}}|a|.
\end{equation*}
In particular, any $K$-vector space norm is equivalent to one with the same unit ball and value set $|K^*|$ by discreteness. Functoriality of $\otimes$ thus implies the following more general result.
\begin{corollary}
\label{tensorunitballgen}
Let $M$ be a normed right $A$-module and $N$ a normed left $A$-module. Then the tensor product semi-norm on $M\otimes_A N$ is equivalent to the gauge semi-norm associated to the lattice which is the image of the canonical morphism $M^\circ\otimes_{A^\circ} N^\circ \to M\otimes_A N$.
\end{corollary}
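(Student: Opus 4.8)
The plan is to reduce to Lemma \ref{tensorunitball} by first renorming $M$ and $N$ so that their value sets become $|K^*|$ without changing their unit balls, and then transporting the resulting description of the unit ball of the tensor product back along the renorming.

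First I would invoke the remark preceding the statement (a consequence of \cite[Lemma 2.2]{SchneiderNFA} together with discreteness of $|K^*|$) to choose norms $\|\cdot\|_M$ on $M$ and $\|\cdot\|_N$ on $N$ that are equivalent to the given ones, have the same unit balls $M^\circ$ and $N^\circ$, and have value set equal to $|K^*|$ (the case $M=0$ or $N=0$ being trivial, since then $M\otimes_A N = 0$). With these norms, Lemma \ref{tensorunitball} applies and identifies the unit ball of $M\otimes_A N$, for the tensor product semi-norm $\|\cdot\|_{M\otimes N}$ built from $\|\cdot\|_M$ and $\|\cdot\|_N$, with the image $L$ of the canonical map $M^\circ\otimes_{A^\circ} N^\circ\to M\otimes_A N$. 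Since $M^\circ$ and $N^\circ$ were not altered, $L$ is exactly the lattice appearing in the statement; and applying \cite[Lemma 2.2]{SchneiderNFA} once more shows that $\|\cdot\|_{M\otimes N}$ is equivalent to the gauge semi-norm attached to $L$.

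It then remains to check that $\|\cdot\|_{M\otimes N}$ is equivalent to the tensor product semi-norm $|\cdot|_{M\otimes N}$ defined by the original norms on $M$ and $N$. This follows from functoriality of $\otimes_A$: a bounded morphism $M\to M'$ of normed right $A$-modules (resp.\ $N\to N'$ of normed left $A$-modules) induces a bounded morphism of tensor products for the tensor product semi-norms, with operator norm at most that of the given morphism, which is immediate from the definition of the tensor product semi-norm (or from the universal property). Applying this to the identity maps between $(M,|\cdot|_M)$ and $(M,\|\cdot\|_M)$, and likewise for $N$ -- all bounded since the respective norms are equivalent -- yields the equivalence of $|\cdot|_{M\otimes N}$ and $\|\cdot\|_{M\otimes N}$, and hence of $|\cdot|_{M\otimes N}$ and the gauge semi-norm of $L$. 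No step here is delicate; the only point that deserves a moment's attention is confirming that passing to the gauge norm normalizes the value set to $|K^*|$ while keeping the unit ball intact, which is precisely what makes Lemma \ref{tensorunitball} applicable.
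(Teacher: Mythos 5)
Your proposal is correct and follows essentially the same route the paper intends: renorm $M$ and $N$ by their gauge norms (which by \cite[Lemma 2.2]{SchneiderNFA} and discreteness are equivalent, preserve the unit balls, and normalize the value sets to $|K^*|$), apply Lemma \ref{tensorunitball} to identify the unit ball of the resulting tensor seminorm with the lattice $L$, and then transport back via functoriality of $\otimes_A$ applied to the identity maps, which are bounded in both directions. The paper's proof consists of exactly this observation stated in one sentence, so your write-up is a faithful elaboration rather than a new argument; the only detail worth flagging, which you correctly noted, is that the gauge renorming keeps both $M^\circ$ and $N^\circ$ unchanged while forcing the value sets into $|K^*|$, which is precisely what makes Lemma \ref{tensorunitball} available.
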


To ease notation, we will, for any morphism $\phi: M\to N$ of semi-normed $K$-vector spaces, reserve $\im \phi$ for the image of $\phi$ equipped with the subspace semi-norm, and write $\coim \phi$ for $M/\ker\phi$ with the quotient semi-norm.

\begin{definition}[{see \cite[Definition 1.1.9/1]{BGR}}]
\label{strictdef}
A continuous linear map $\phi$ between two semi-normed $K$-vector spaces $G\to H$ is called \textbf{strict} if the natural morphism $\coim \phi\to \im \phi$ is a homeomorphism.
\end{definition}

In practice, we often use the following equivalent property as a definition.
\begin{lemma}[{see \cite[Lemma 1.1.9/2]{BGR}}]
\label{strictcriterion}
Let $\phi: M\to N$ be a continuous morphism between two semi-normed $K$-vector spaces. Then $\phi$ is strict if and only if there exists some integer $a$ satisfying the following:\\
For any $x\in M$ with $|\phi(x)|\leq 1$, there exists $y\in M$ such that $\phi(x)=\phi(y)$ and $|y|\leq |\pi|^a$, i.e. $N^\circ \cap \im \phi\subseteq \phi(\pi^aM^\circ)$.
\end{lemma}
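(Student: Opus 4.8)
The plan is to reformulate strictness as a single norm estimate on the inverse of the canonical map $\coim\phi\to\im\phi$, and then to deduce the two implications by a rescaling argument. First I would record that the canonical linear bijection $j\colon\coim\phi\to\im\phi$, $x+\ker\phi\mapsto\phi(x)$, is automatically bounded: continuity of $\phi$ gives $C>0$ with $|\phi(m)|\le C|m|$ for all $m$, hence $|\phi(x)|=|\phi(x-z)|\le C|x-z|$ for every $z\in\ker\phi$, so $|\phi(x)|\le C\|x+\ker\phi\|$ (quotient semi-norm on $\coim\phi$). Since a bounded linear bijection of semi-normed spaces is a homeomorphism exactly when its inverse is bounded, $\phi$ is strict if and only if there is some $D>0$ with
\[
\|x+\ker\phi\|\ \le\ D\,|\phi(x)|\qquad\text{for all }x\in M.
\]
I would also note at the outset that the reformulation "$N^\circ\cap\im\phi\subseteq\phi(\pi^aM^\circ)$" is word for word the assertion that every $x$ with $|\phi(x)|\le 1$ admits some $y$ with $\phi(y)=\phi(x)$ and $|y|\le|\pi|^a$, so there is nothing to check there; it suffices to compare the displayed estimate with this $y$-condition.

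For the implication "strict $\Rightarrow$ $y$-condition" I would take $D$ as above, choose an integer $a$ with $|\pi|^a>D$ (possible since $|\pi|^a\to\infty$ as $a\to-\infty$), and observe that if $|\phi(x)|\le 1$ then the estimate yields $\inf_{z\in\ker\phi}|x-z|\le D<|\pi|^a$, so some $z\in\ker\phi$ satisfies $|x-z|\le|\pi|^a$ and $y:=x-z$ does the job. For the converse, suppose the $y$-condition holds for the integer $a$; one must verify the estimate $\|x+\ker\phi\|\le D|\phi(x)|$ for an arbitrary $x\in M$, not only one with $|\phi(x)|\le 1$, and this is precisely where discreteness of $|K^*|$ enters. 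If $\phi(x)=0$ both sides vanish; otherwise I would use discreteness to choose $\lambda\in K^*$ with $|\lambda|\le|\phi(x)|^{-1}$ maximal, so that also $|\lambda|^{-1}<|\pi|^{-1}|\phi(x)|$. Then $|\phi(\lambda x)|\le 1$, so the hypothesis provides $y\in M$ with $\phi(y)=\phi(\lambda x)$ and $|y|\le|\pi|^a$; since $x-\lambda^{-1}y\in\ker\phi$,
\[
\|x+\ker\phi\|\ \le\ |\lambda^{-1}y|\ =\ |\lambda|^{-1}|y|\ \le\ |\lambda|^{-1}|\pi|^a\ <\ |\pi|^{a-1}|\phi(x)|,
\]
so the estimate holds with $D=|\pi|^{a-1}$ and $\phi$ is strict.

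I do not expect a genuine obstacle here: this is a routine lemma, and the only step requiring real care is the rescaling in the converse direction, where the discreteness of the value group $|K^*|$ (assumed throughout the section) is what allows one to pass from the normalized situation $|\phi(x)|\le 1$ supplied by the hypothesis to the estimate for an arbitrary $x$. Working with semi-norms in place of norms changes nothing, since every computation above is valid verbatim for semi-normed $K$-vector spaces.
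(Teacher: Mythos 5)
Your argument is correct, and the paper itself gives no proof here — it simply cites \cite[Lemma 1.1.9/2]{BGR} — so the appropriate benchmark is the standard argument, which is exactly what you give: reformulate strictness as a two-sided norm estimate on the canonical bijection $\coim\phi\to\im\phi$, note one inequality is automatic from continuity of $\phi$, and pass between the remaining estimate and the normalized $y$-condition by rescaling with a maximal $\lambda\in K^*$ with $|\lambda|\le|\phi(x)|^{-1}$, which exists by discreteness of $|K^*|$. The only point worth stressing explicitly (which you do handle correctly) is the discreteness step in the converse direction: since $|K^*|=\{|\pi|^n:n\in\mathbb{Z}\}$, the chosen $\lambda$ satisfies $|\lambda|^{-1}<|\pi|^{-1}|\phi(x)|$, giving the uniform constant $D=|\pi|^{a-1}$.
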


In the setting of Banach spaces, the Open Mapping Theorem allows for the following criterion.
\begin{lemma}
\label{closedimagestrict}
Let $f: M\to N$ be a continuous morphism of Banach spaces. Then $f$ is strict if and only if the image of $f$ is closed in $N$.
\end{lemma}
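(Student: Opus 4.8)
The plan is to prove both directions using the Open Mapping Theorem for Banach spaces, exploiting the universal property of the coimage and the completeness of all the spaces involved.

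\medskip

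\textbf{Proof.} Suppose first that $f$ is strict. By Lemma \ref{strictcriterion}, there is an integer $a$ such that $N^\circ \cap \im f \subseteq f(\pi^a M^\circ)$; equivalently, the map $\coim f \to \im f$ admits a bounded inverse, so $\coim f$ and $\im f$ are homeomorphic as semi-normed spaces. Now $\coim f = M/\ker f$ is a Banach space, since $M$ is complete and $\ker f$ is closed (being the preimage of $\{0\}$ under the continuous map $f$); the quotient of a Banach space by a closed subspace is complete. Hence $\im f$ is complete, and a complete subspace of the Hausdorff space $N$ is closed. This gives one direction.

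\medskip

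Conversely, suppose $\im f$ is closed in $N$. Then $\im f$, with the subspace norm inherited from $N$, is itself a Banach space. The induced map $\bar f \colon \coim f \to \im f$ is a continuous bijection (by the first isomorphism theorem on the underlying vector spaces), and both source and target are Banach spaces, as noted above for $\coim f$ and just now for $\im f$. By the Open Mapping Theorem, $\bar f$ is open, hence a homeomorphism, which is precisely the statement that $f$ is strict. \qed

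\medskip

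The only subtlety worth flagging is that one must verify completeness of $\coim f = M/\ker f$ and of the closed subspace $\im f$ before invoking the Open Mapping Theorem; both are standard facts in non-archimedean functional analysis (see \cite{SchneiderNFA}), the first because quotients of Banach spaces by closed subspaces are Banach, the second because closed subspaces of Banach spaces are Banach. Once both spaces are known to be complete, the Open Mapping Theorem applies verbatim in the non-archimedean setting, and no further work is needed.
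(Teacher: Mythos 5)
Your proof is correct and follows essentially the same route as the paper's: one direction transfers completeness from $\coim f$ to $\im f$ via the homeomorphism given by strictness, and the converse applies the Open Mapping Theorem to the induced bijection of Banach spaces $\coim f \to \im f$. You simply spell out the intermediate completeness checks (that $\ker f$ is closed, that quotients of Banach spaces by closed subspaces are Banach, that closed subspaces of Banach spaces are Banach) which the paper leaves implicit.
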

\begin{proof}
If $f$ is strict, then $\im f\cong \coim f$ is a Banach subspace of $N$, and therefore closed. Conversely, if $\im f$ is closed, it is Banach, so the Open Mapping Theorem asserts that the surjection $M\to \im f$ is strict. Thus $\coim f \cong \im f$ as required.
\end{proof}

This means in particular that any exact sequence of Banach spaces with continuous differentials consists of strict morphisms, as each image is closed (being equal to the kernel of the next map).\\
\\ 
The key property of strict morphisms for us is the following.

\begin{proposition}[{see \cite[Propositions 1.1.9/4, 5]{BGR}}]
\label{strictcompletionexact}
If $M, N$ are semi-normed (left) $A$-modules and $\phi: M\to N$ is strict, then the completion 
\begin{equation*}
\widehat{\phi}: \widehat{M}\to \widehat{N}
\end{equation*}
is also strict and has kernel $\widehat{\ker\ \phi}$ and image $\widehat{\im\ \phi}$.\\
In particular, an exact sequence consisting of strict morphisms of semi-normed $A$-modules remains exact after completion.
\end{proposition}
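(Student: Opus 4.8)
The plan is to reduce everything to the strictness criterion of Lemma~\ref{strictcriterion}, reformulated as follows: $\phi\colon M\to N$ is strict if and only if there is a constant $C>0$ such that every element of $\im\phi$ admits a \emph{controlled lift}, i.e.\ for each $x\in M$ there is $y\in M$ with $\phi(y)=\phi(x)$ and $|y|\le C\,|\phi(x)|$. Given the integer $a$ from Lemma~\ref{strictcriterion}, for $\phi(x)\ne 0$ one would pick $k\in\mathbb{Z}$ maximal with $|\pi|^k\ge |\phi(x)|$, apply the criterion to $\pi^{-k}x$ and rescale to obtain $y$ with $|y|\le|\pi|^{k+a}\le |\pi|^{a-1}|\phi(x)|$; the case $\phi(x)=0$ is trivial, and the converse is immediate. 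I would then use controlled lifts of $\phi$ throughout.

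First I would treat the image and the strictness of $\widehat\phi$ together. Since $\widehat\phi$ is continuous and $\widehat\phi(\lim x_n)=\lim\phi(x_n)$, the image $\im\widehat\phi$ is automatically contained in the closure of $\im\phi$ in $\widehat N$, which is (by the standard identification of the completion of a subspace with its closure) exactly $\widehat{\im\ \phi}$. For the reverse inclusion, and simultaneously the norm estimate needed for strictness, I would take $\hat n$ in the closure of $\im\phi$ with $|\hat n|\le 1$, choose $x_n\in M$ with $\phi(x_n)\to\hat n$, and pass to a subsequence so that $|\phi(x_1)|\le1$ and $|\phi(x_{n+1})-\phi(x_n)|\le|\pi|^n$ for all $n\ge1$. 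Taking a controlled lift $y_1$ of $\phi(x_1)$ and controlled lifts $z_n$ of $\phi(x_{n+1}-x_n)$, the series $\hat y=y_1+\sum_{n\ge1}z_n$ converges in $\widehat M$ with $|\hat y|\le C$, and $\widehat\phi(\hat y)=\lim_n\phi(x_n)=\hat n$ by continuity. This both identifies $\widehat{\im\ \phi}=\im\widehat\phi$ and, via Lemma~\ref{strictcriterion}, shows $\widehat\phi$ is strict. For the kernel, the inclusion $\widehat{\ker\ \phi}\subseteq\ker\widehat\phi$ is clear since $\ker\widehat\phi$ is closed; conversely, for $\hat x\in\ker\widehat\phi$ I would pick $x_n\to\hat x$ in $M$, so $|\phi(x_n)|\to 0$, take controlled lifts $y_n$ of $\phi(x_n)$ (so $|y_n|\le C|\phi(x_n)|\to0$), and note that $x_n-y_n\in\ker\phi$ converges to $\hat x$, whence $\hat x\in\widehat{\ker\ \phi}$.

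For the final assertion, given an exact sequence with strict differentials, exactness at a term $N$ sitting between maps $M\xrightarrow{f}N\xrightarrow{g}P$ means $\im f=\ker g$ as subsets of $N$; passing to closures in $\widehat N$ and applying the identifications $\im\widehat f=\widehat{\im\ f}=\overline{\im f}$ and $\ker\widehat g=\widehat{\ker\ g}=\overline{\ker g}$ just established gives exactness of the completed sequence at $\widehat N$. The genuinely delicate point is the bookkeeping in the image/strictness step: one must arrange the approximating sequence so that the controlled lifts of successive differences have geometrically decaying norms, which is what simultaneously guarantees convergence of $\hat y$ in $\widehat M$ and the uniform bound $|\hat y|\le C$. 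Once that is in place, the remainder is formal manipulation with closures, continuity, and the completion of subspaces.
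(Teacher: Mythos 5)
The paper itself gives no proof here: Proposition \ref{strictcompletionexact} is a citation to \cite[Propositions 1.1.9/4, 5]{BGR}. Your argument is a correct, self-contained reconstruction of exactly the standard BGR proof: reformulate Lemma \ref{strictcriterion} as the existence of a uniform constant $C$ furnishing controlled lifts, then propagate lifts to the completion by an ultrametric telescoping argument to get $\im\widehat\phi=\widehat{\im\phi}$ together with the norm bound witnessing strictness, and handle the kernel by subtracting off small lifts. The small bookkeeping points all check out: the passage from the integer $a$ of Lemma \ref{strictcriterion} to the constant $C=|\pi|^{a-1}$ is valid because one can choose $k$ maximal with $|\pi|^{k}\geq |\phi(x)|$ and use $|\phi(x)|>|\pi|^{k+1}$; for $|\hat n|\leq 1$ one can arrange $|\phi(x_1)|\leq 1$ on the approximating sequence by the ultrametric inequality (eventually $|\phi(x_n)|=|\hat n|$ when $\hat n\neq 0$, and trivially when $\hat n=0$); and the identifications $\widehat{\im\phi}$, $\widehat{\ker\phi}$ with the respective closures in $\widehat N$, $\widehat M$ are the standard facts about completions of subspaces with the induced semi-norm, which is what allows the final exactness conclusion to be purely a statement about closures. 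So this is correct and is essentially the proof the cited reference gives.
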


We also note that strict surjections behave well under tensor products.
\begin{theorem}[{see \cite[Proposition 2.1.8/6]{BGR}}]
\label{surjtensorstrict}
If $\phi_1: M_1\to N_1$, resp. $\phi_2: M_2\to N_2$ are strict surjective morphisms of normed right, resp. left $A$-modules (and $A$ contains a field with a non-trivial valuation), then the morphism 
\begin{equation*}
\phi_1\otimes \phi_2: M_1\otimes_AM_2\to N_1\otimes_A N_2
\end{equation*}
is surjective and strict with respect to the corresponding tensor product semi-norms. \\
Thus $\phi_1\widehat{\otimes} \phi_2$ is still surjective.
\end{theorem}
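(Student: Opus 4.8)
The plan is to prove Theorem~\ref{surjtensorstrict} by treating the two assertions separately. Surjectivity is purely algebraic: the ordinary tensor product over $A$ is right exact, the simple tensors $n_1\otimes n_2$ with $n_j\in N_j$ generate $N_1\otimes_A N_2$, and any choice of preimages $m_j\in M_j$ of $n_j$ (which exist since $\phi_j$ is onto) shows $n_1\otimes n_2=(\phi_1\otimes\phi_2)(m_1\otimes m_2)$ lies in the image; hence $\phi_1\otimes\phi_2$ is surjective. The hypothesis that $A$ contains a nontrivially valued field — automatic here, since $A$ is a $K$-algebra — is what makes $M_1\otimes_A M_2$ and $N_1\otimes_A N_2$ semi-normed $K$-vector spaces in the first place, and it is used implicitly in the reduction to simple tensors over $A^\circ$ below.

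For strictness I would use the unit-ball criterion of Lemma~\ref{strictcriterion} together with the lattice description of the tensor product semi-norm in Corollary~\ref{tensorunitballgen}. Applying Lemma~\ref{strictcriterion} to the strict surjections $\phi_j$ gives integers $a_1,a_2$ with $N_j^\circ\subseteq\phi_j(\pi^{a_j}M_j^\circ)$ for $j=1,2$ (surjectivity lets us drop the intersection with $\im\phi_j$). Let $L_M\subseteq M_1\otimes_A M_2$ and $L_N\subseteq N_1\otimes_A N_2$ denote the images of $M_1^\circ\otimes_{A^\circ}M_2^\circ$ and $N_1^\circ\otimes_{A^\circ}N_2^\circ$ under the canonical maps. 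The key step is the inclusion
\[
L_N\ \subseteq\ (\phi_1\otimes\phi_2)\bigl(\pi^{a_1+a_2}L_M\bigr).
\]
Indeed, $L_N$ is generated as an abelian group by the images of simple tensors $n_1\otimes n_2$ with $n_j\in N_j^\circ$, and the right-hand side is a subgroup, so it is enough to lift such a generator; writing $n_j=\phi_j(\pi^{a_j}m_j)$ with $m_j\in M_j^\circ$, the element $\pi^{a_1+a_2}(m_1\otimes m_2)\in\pi^{a_1+a_2}L_M$ is carried to $n_1\otimes n_2$ by $\phi_1\otimes\phi_2$. By Corollary~\ref{tensorunitballgen} the tensor product semi-norms on $M_1\otimes_A M_2$ and $N_1\otimes_A N_2$ are equivalent to the gauge semi-norms of $L_M$ and $L_N$; feeding the displayed inclusion into Lemma~\ref{strictcriterion} (the equivalences only shift exponents by a fixed constant) produces a single integer $a$ such that every $x\in M_1\otimes_A M_2$ with $|(\phi_1\otimes\phi_2)(x)|\le 1$ admits a preimage of the same image with semi-norm $\le|\pi|^a$, i.e. $\phi_1\otimes\phi_2$ is strict.

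For the completed statement, by functoriality of completion and the universal property of $\widehat{\otimes}$ recalled after Definition~\ref{comptensor}, the map $\phi_1\widehat{\otimes}\phi_2$ is the completion of the strict surjection $\phi_1\otimes\phi_2$; Proposition~\ref{strictcompletionexact} then identifies its image with the closure of $N_1\otimes_A N_2$, which is all of $N_1\widehat{\otimes}_A N_2$, so it is surjective. The only genuine obstacle I anticipate is \emph{uniformity}: converting the pointwise liftings furnished by strictness of the $\phi_j$ into one exponent $a$ valid throughout $N_1\otimes_A N_2$, despite the tensor product semi-norm being defined by an infimum. Passing to the lattices $L_M,L_N$ removes the infimum from the argument and reduces everything to bookkeeping with powers of $\pi$; alternatively one can work directly with representations $z=\sum_i n_1^{(i)}\otimes n_2^{(i)}$ achieving $\max_i|n_1^{(i)}|\cdot|n_2^{(i)}|\le 2$, lift each factor via a scaled form of Lemma~\ref{strictcriterion} (using $K\subseteq A$ to rescale simple tensors and discreteness of $|K^*|$ to bound the scaling), and sum, at the cost of slightly more care with the constants.
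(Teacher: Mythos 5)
The paper cites this result from \cite[Proposition 2.1.8/6]{BGR} without giving a proof of its own, so there is no in-paper argument to compare against; the only remark in the text is a clarification that ``epimorphism'' in the reference should be read as ``surjective morphism.'' Your proof is correct and self-contained, and it is well adapted to the lattice machinery the paper builds in Section 2. The reduction to the containment $L_N\subseteq(\phi_1\otimes\phi_2)(\pi^{a_1+a_2}L_M)$ is the right move: the right-hand side is a subgroup, $L_N$ is generated by images of simple tensors $n_1\otimes n_2$ with $n_j\in N_j^\circ$, and each such generator lifts termwise using the strictness exponents for $\phi_1,\phi_2$ (after first dropping the intersection with $\im\phi_j$ in Lemma \ref{strictcriterion}, which is legitimate because each $\phi_j$ is surjective). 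Passing to lattices is exactly what kills the uniformity issue you flag at the end, because the infimum in the definition of the tensor semi-norm never has to be invoked once one works with $L_M, L_N$ rather than with particular representations of elements; Corollary \ref{tensorunitballgen} then translates the lattice inclusion back into the criterion of Lemma \ref{strictcriterion}, at the cost of fixed constants from the equivalence of the tensor and gauge semi-norms, and Proposition \ref{strictcompletionexact} gives the final surjectivity of $\phi_1\widehat{\otimes}\phi_2$. The observation the paper makes right after the theorem (that the tensor semi-norm of two quotient semi-norms is equivalent to the quotient semi-norm on the tensor product) suggests the BGR argument works directly with quotient semi-norms; your version reaches the same destination along a route more consonant with the $R$-model language used throughout the paper.
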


Since the codomain of a strict surjection is (up to equivalence) equipped with the quotient semi-norm, we can summarize the above by saying that the tensor semi-norm of two quotients is equivalent to the quotient semi-norm from the corresponding tensor product.\\
We also note that the reference states this theorem with `epimorphism' instead of `surjective morphism', but the proof makes it clear that this is understood to be an epimorphism in some category in which the two notions coincide (presumedly the category of \emph{semi}-normed $A$-modules ).\\
%\begin{lemma}
%\label{extendformalmodel}
%Let $\phi: A\to B$ be a morphism of affinoid $K$-algebras, and let $\mathcal{A}\subset A$, $\mathcal{B}\subset B$ be affine formal models. Then $\mathcal{B}':=\phi(\mathcal{A})\cdot \mathcal{B}$ is an affine formal model of $B$.  
%\end{lemma}
%\begin{proof}
%Let $\alpha: T_m \to A$, $\beta: T_n\to B$ be surjections of $K$-algebras such that $\alpha(R\langle X_1, \dots, X_m\rangle)=\mathcal{A}$ and $\beta(R\langle Y_1, \dots Y_n\rangle)=\mathcal{B}$. Note that 
%\begin{equation*}
%T_m\widehat{\otimes}_K T_n\cong K[X_1, \dots, X_m]\widehat{\otimes}_K K[Y_1, \dots Y_n] \cong T_{m+n},
%\end{equation*}
%and the morphism $\theta: T_{m+n} \to B$ given by 
%\begin{equation*}
%T_{m+n}\cong T_m\widehat{\otimes}_K T_n\to A\widehat{\otimes}_K B\to B
%\end{equation*}
%gives rise to a residue norm with unit ball $\phi(\mathcal{A})\cdot \mathcal{B}$.
%\end{proof}
%In particular, there exists a residue norm which makes $B$ a Banach $A$-module, and it thus follows from the definition that we can view any Banach $B$-module as a Banach $A$-module.\\
\\ 
The composition of strict morphisms is not necessarily strict, but at least we have the following result.
\begin{lemma}
\label{strictcomposition}
Let 
\begin{equation*}
\begin{xy}
\xymatrix{
L\ar[r]^f& M\ar[r]^g& N}
\end{xy}
\end{equation*}
be a composition of continuous morphisms between semi-normed $A$-modules. Suppose that $f$ and $g$ are strict and that at least one of the following is satisfied:
\begin{enumerate}[(i)]
\item $f$ is surjective.
\item $g$ is injective.
\end{enumerate}
Then the composition $gf$ is also strict.
\end{lemma}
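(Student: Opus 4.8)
The plan is to check strictness of $gf$ by means of the explicit criterion in Lemma~\ref{strictcriterion}: it suffices to exhibit a single integer $a$ with $N^\circ\cap\im(gf)\subseteq (gf)(\pi^aL^\circ)$. Strictness of $f$ provides an integer $b$, and strictness of $g$ an integer $c$, satisfying the analogous inclusions for $f$ and for $g$ individually; the claim will then follow by taking $a=b+c$ in each of the two cases and chasing a given element $x\in N^\circ\cap\im(gf)$ through these inclusions.

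In case (i), surjectivity of $f$ upgrades the strictness inclusion for $f$ to $M^\circ\subseteq f(\pi^bL^\circ)$, and it also gives $\im(gf)=\im g$. For $x\in N^\circ\cap\im(gf)=N^\circ\cap\im g$ we first write $x=g(\pi^c m)$ with $m\in M^\circ$ using strictness of $g$; then $\pi^c m\in\pi^cf(\pi^bL^\circ)=f(\pi^{b+c}L^\circ)$, so $\pi^c m=f(\pi^{b+c}l)$ for some $l\in L^\circ$, whence $x=(gf)(\pi^{b+c}l)$, as wanted.

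In case (ii) I argue symmetrically, but now use injectivity of $g$ to lift along $f$ in a controlled way. Given $x\in N^\circ\cap\im(gf)$, write $x=g(m)$ with $m=f(l_0)\in\im f$ for some $l_0\in L$. Since $x\in N^\circ\cap\im g$, strictness of $g$ yields $m'\in\pi^cM^\circ$ with $g(m')=x=g(m)$, and injectivity of $g$ forces $m=m'\in\pi^cM^\circ$; hence $\pi^{-c}m\in M^\circ\cap\im f\subseteq f(\pi^bL^\circ)$ by strictness of $f$, say $\pi^{-c}m=f(\pi^bl)$ with $l\in L^\circ$, so that $m=f(\pi^{b+c}l)$ and $x=g(m)=(gf)(\pi^{b+c}l)$.

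There is no genuine obstacle here; the one point requiring care is the sign bookkeeping for the exponents $b,c$ (either of which may be negative, so that $\pi^bL^\circ$ can be strictly larger than $L^\circ$), but this causes no trouble since Lemma~\ref{strictcriterion} permits an arbitrary integer. Conceptually, hypotheses (i) and (ii) are exactly what is needed to pull an element of $N$ back to $L$ with norm control: either because $f$ is onto, or because the injectivity of $g$ identifies the preimage of $x$ under $g$ with the actual element $f(l_0)$, which the strictness of $g$ then places in the correct ball. (One could alternatively phrase the proof using $\coim$ and $\im$ and Proposition~\ref{strictcompletionexact}-style reasoning, but the norm-estimate version above is the most direct.)
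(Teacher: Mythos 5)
Your proof is correct and follows essentially the same line as the paper's: both use the explicit criterion of Lemma~\ref{strictcriterion}, take the sum of the two integers guaranteed for $f$ and $g$, and in each case produce a preimage with the required norm bound by first lifting through $g$ and then through $f$ (with surjectivity of $f$, resp.\ injectivity of $g$, supplying the membership in $\im f$ needed to invoke strictness of $f$). The paper phrases the chase starting from $x\in L$ with $|gf(x)|\leq 1$ rather than from an element of $N^\circ\cap\im(gf)$, but this is only a cosmetic difference.
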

\begin{proof}
Let $a, b$ be integers such that $M^\circ \cap \im f\subseteq f(\pi^a L^\circ)$ and $N^\circ \cap \im g\subseteq g(\pi^b M^\circ)$. We will show that in both cases, $N^\circ \cap \im gf \subseteq gf(\pi^{a+b}L^\circ)$.\\
Let $x\in L$ satisfy $|gf(x)|\leq 1$. By definition, there exists some $y\in M$ such that $g(y)=gf(x)$ and $|y|\leq |\pi|^b$.\\
If $f$ is surjective, then $y \in \im f$, so that $\pi^{-b}y\in M^\circ \cap \im f$. Thus there exists $z\in L$ such that $f(z)=y$ and $|z|\leq |\pi|^{a+b}$. But then $gf(z)=g(y)=gf(x)$, proving strictness of $gf$.\\
Similarly, if $g$ is injective, we know that $y=f(x)$, so again $\pi^{-b}y \in M^\circ \cap \im f$. So by strictness of $f$, there exists some $z\in L$ such that $|z|\leq |\pi|^{a+b}$ and $f(z)=y$. Then $gf(z)=g(y)=gf(x)$ yields the result.
\end{proof}

\subsection{Completed tensor products and short exact sequences}
Let $U$ be a normed right $A$-module with value set equal to $|K^*|$.\\
Let $\mathcal{U}$ be a right $A^\circ$-module with a surjective morphism $\mathcal{U}\to U^\circ$, inducing an isomorphism $\mathcal{U}\otimes_R K\cong U$. There is no harm in taking $\mathcal{U}=U^\circ$ in this section, but we will require our results in the more general setting later.\\
\\
We will now be concerned with the question under which conditions the functor $U\widehat{\otimes}_A -$ preserves the exactness of a given sequence.\\
We will first restrict ourselves to short exact sequences, before generalizing our results to other cochain complexes in the next subsection.\\
\\
As might be expected, we obtain some reasonable results in the case when the given sequence consists of strict morphisms. We briefly describe the general strategy of our arguments. Given a normed left $A$-module $M$, we analyse the $R$-module $\mathcal{U}\otimes_{A^\circ} M^\circ$, which determines the tensor product semi-norm on $U\otimes M$ by Lemma \ref{tensorunitball}. To pass from $\mathcal{U}\otimes M^\circ$ to the actual unit ball of $U\otimes M$, recall that the kernel of the morphism $\mathcal{U}\otimes_{A^\circ} M^\circ\to U\otimes M$ is the $\pi$-torsion of $\mathcal{U}\otimes M^\circ$.\\
Thus we will rephrase questions about strictness in terms of the corresponding `$R$-model tensors' $\mathcal{U}\otimes M^\circ$ and their $\pi$-torsion.\\
\\
As many of the arguments will involve properties of $R$-modules `up to bounded $\pi$-torsion', we introduce the following language (see \cite[3.4]{Ardakov1}).\\
Write $\mathcal{BT}$ for the category of $R$-modules which are bounded $\pi$-torsion, i.e. are killed by some power of $\pi$. Consider the quotient abelian category
\begin{equation*}
\mathcal{Q}=R\text{-mod}/\mathcal{BT},
\end{equation*}
with the natural quotient functor $q: R\text{-mod}\to \mathcal{Q}$.\\
For example, if $f: M\to N$ is a morphism of $R$-modules such that both kernel and cokernel of $f$ are objects in $\mathcal{BT}$, then $q(f)$ is an isomorphism between $q(M)$ and $q(N)$ (we will sometimes suppress the functor $q$ and write instead `$f$ induces an isomorphism between $M$ and $N$ in $\mathcal{Q}$').\\
In particular, if $a\in \mathbb{N}$, then the morphism $\pi^a: M\to M$ given by multiplication by $\pi^a$ induces an isomorphism in $\mathcal{Q}$ for any $R$-module $M$, as both kernel and cokernel of the map (in $R\text{-mod}$) are annihilated by $\pi^a$.

\begin{lemma}
\label{isoequivtorsion}
Let $f: M\to N$ be a morphism of $R$-modules inducing an isomorphism in $\mathcal{Q}$. Then $f$ induces an isomorphism in $\mathcal{Q}$ between $\pi\mathrm{-tor}(M)$ and $\pi\mathrm{-tor}(N)$.
\end{lemma}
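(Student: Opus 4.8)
The plan is to translate the hypothesis into a pair of concrete annihilation conditions and then chase elements. To say that $f$ induces an isomorphism in $\mathcal{Q}$ means precisely that $\ker f$ and $\coker f$ both lie in $\mathcal{BT}$, so there is an integer $a\geq 0$ with $\pi^a\cdot\ker f=0$ and $\pi^a\cdot\coker f=0$; the latter is the same as $\pi^a N\subseteq f(M)$. Since $f(\pi^n m)=\pi^n f(m)$, the map $f$ carries $\pi\mathrm{-tor}(M)$ into $\pi\mathrm{-tor}(N)$, so it restricts to a morphism $g\colon\pi\mathrm{-tor}(M)\to\pi\mathrm{-tor}(N)$, and everything reduces to checking that $\ker g$ and $\coker g$ lie in $\mathcal{BT}$.

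For the kernel this is immediate: $\ker g=\ker f\cap\pi\mathrm{-tor}(M)\subseteq\ker f$ is killed by $\pi^a$.

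For the cokernel, which is the one genuinely interesting point, I would argue as follows. Given $n\in\pi\mathrm{-tor}(N)$, pick $b\geq 0$ with $\pi^b n=0$. Using $\pi^a N\subseteq f(M)$, write $\pi^a n=f(m)$ for some $m\in M$. Then $f(\pi^b m)=\pi^b f(m)=\pi^a(\pi^b n)=0$, so $\pi^b m\in\ker f$ and hence $\pi^{a+b}m=0$; in particular $m\in\pi\mathrm{-tor}(M)$. Thus $\pi^a n=g(m)\in\im g$, and since $n$ was arbitrary, $\pi^a\cdot\pi\mathrm{-tor}(N)\subseteq\im g$, so $\coker g$ is annihilated by $\pi^a$, i.e.\ lies in $\mathcal{BT}$. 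Therefore $g$ induces an isomorphism in $\mathcal{Q}$.

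The only subtlety worth spelling out — and essentially the only place one could stumble — is that the torsion exponent $a+b$ of the element $m$ depends on $n$ through $b$. This is harmless: all that is needed from $m$ is membership in the submodule $\pi\mathrm{-tor}(M)$, whereas the uniform exponent required to conclude $\coker g\in\mathcal{BT}$ is the fixed $a$ coming from $\pi^a\cdot\coker f=0$. So there is no real obstacle here; the statement is a short diagram/element chase once the hypothesis has been unpacked.
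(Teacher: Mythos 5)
Your proof is correct and follows essentially the same element chase as the paper's: given a $\pi$-torsion element of $N$, lift a suitable $\pi$-power multiple of it to $M$, observe that the lift must itself be $\pi$-torsion because its image is, and conclude that the cokernel of the restricted map is uniformly $\pi$-power torsion. The only cosmetic difference is that you collapse the two constants (for kernel and cokernel of $f$) into one and you spell out the kernel case explicitly, neither of which changes the argument.
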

\begin{proof}
By definition, there exists some positive integers $a$ and $b$ such that $\pi^a$ annihilates the kernel of $f$ and $\pi^b$ annihilates the cokernel of $f$.\\
Restricting to $\pi\mathrm{-tor}(M)$, $f$ thus induces an isomorphism between $\pi\mathrm{-tor}(M)$ and $f(\pi\mathrm{-tor}(M))$ in $\mathcal{Q}$. Now let $x\in N$ be $\pi$-torsion, i.e. there exists $n$ such that $\pi^nx=0$. By assumption, there exists some $y\in M$ with $f(y)=\pi^bx$, and hence $\pi^{n-b}y\in \ker f$. Thus $\pi^{a+n-b}y=0$, so $y\in \pi\mathrm{-tor}(M)$. Therefore $\pi^b\cdot \pi\mathrm{-tor}(N)\subseteq f(\pi\mathrm{-tor}(M))\subseteq \pi\mathrm{-tor}(N)$, finishing the proof.
\end{proof}

\begin{lemma}
\label{equivalenttorsion}
Let $M$ be a normed left $A$-module with two equivalent norms $|\mathrm{-}|_1$, $|\mathrm{-}|_2$, with respective unit balls $M^\circ_1$, $M^\circ_2$. Then $\Tor_s^{A^\circ}(\mathcal{U}, M^\circ_1)$ is isomorphic to $\Tor_s^{A^\circ}(\mathcal{U}, M^\circ_2)$ in $\mathcal{Q}$ for each $s\geq 0$.
\end{lemma}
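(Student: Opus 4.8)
The plan is to exploit the fact that two equivalent norms on $M$ have unit balls that are sandwiched between scalar multiples of one another, and then to push this sandwiching through the $\Tor$ functor, using the language of the quotient category $\mathcal{Q}$. Concretely, since $|\mathrm{-}|_1$ and $|\mathrm{-}|_2$ are equivalent, there exist integers $a, b \geq 0$ with
\[
\pi^a M^\circ_1 \subseteq M^\circ_2 \subseteq \pi^{-b} M^\circ_1,
\]
and both $M^\circ_1, M^\circ_2$ are $A^\circ$-submodules of $M$ whose $K$-span is all of $M$. The key observation is that multiplication by $\pi^a$ gives an $A^\circ$-linear map $M^\circ_1 \to M^\circ_2$, and similarly $\pi^b$ gives $M^\circ_2 \to M^\circ_1$; composing these gives multiplication by $\pi^{a+b}$ on each module, which, as noted in the excerpt right before Lemma \ref{isoequivtorsion}, induces an isomorphism in $\mathcal{Q}$.

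First I would apply the functor $\Tor_s^{A^\circ}(\mathcal{U}, -)$ to the two maps $M^\circ_1 \xrightarrow{\pi^a} M^\circ_2$ and $M^\circ_2 \xrightarrow{\pi^b} M^\circ_1$, obtaining $A^\circ$-linear (indeed $R$-linear) maps
\[
\Tor_s^{A^\circ}(\mathcal{U}, M^\circ_1) \longrightarrow \Tor_s^{A^\circ}(\mathcal{U}, M^\circ_2) \longrightarrow \Tor_s^{A^\circ}(\mathcal{U}, M^\circ_1).
\]
By functoriality of $\Tor$ in the second variable, the composite of these two maps is the map induced by multiplication by $\pi^{a+b}$ on $M^\circ_1$, which by naturality is just multiplication by $\pi^{a+b}$ on $\Tor_s^{A^\circ}(\mathcal{U}, M^\circ_1)$. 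Since multiplication by $\pi^{a+b}$ on any $R$-module has kernel and cokernel killed by $\pi^{a+b}$, this composite becomes an isomorphism in $\mathcal{Q}$. Symmetrically, the composite in the other order is multiplication by $\pi^{a+b}$ on $\Tor_s^{A^\circ}(\mathcal{U}, M^\circ_2)$, again an isomorphism in $\mathcal{Q}$. A standard diagram-chase argument (if $gf$ and $fg$ are both isomorphisms, then $f$ and $g$ are isomorphisms) applied in the abelian category $\mathcal{Q}$ then shows that $q$ applied to the map $\Tor_s^{A^\circ}(\mathcal{U}, M^\circ_1) \to \Tor_s^{A^\circ}(\mathcal{U}, M^\circ_2)$ is an isomorphism, which is exactly the claim.

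I do not anticipate a serious obstacle here; the main point to be careful about is the very first step, namely that $\pi^a M^\circ_1 \subseteq M^\circ_2$ actually provides a well-defined $A^\circ$-module homomorphism $M^\circ_1 \to M^\circ_2$. This is immediate once one notes that $M^\circ_1$ and $M^\circ_2$ are both $A^\circ$-submodules of the ambient module $M$ and that multiplication by $\pi^a$ inside $M$ carries $M^\circ_1$ into $M^\circ_2$; the map is $A^\circ$-linear because $\pi^a$ is central. One should also note, for the composite-of-isomorphisms argument in $\mathcal{Q}$, that $\mathcal{Q}$ is an abelian category (stated in the excerpt), so the elementary lemma "$gf$ and $fg$ isomorphisms $\Rightarrow$ $f,g$ isomorphisms" applies verbatim. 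Everything else is routine functoriality of derived functors.
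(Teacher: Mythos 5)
Your proof is correct and follows essentially the same route as the paper: sandwich the unit balls by powers of $\pi$, apply $\Tor_s^{A^\circ}(\mathcal{U},-)$, observe that both composites are multiplication by $\pi^{a+b}$, and conclude they are isomorphisms in $\mathcal{Q}$. The only cosmetic difference is that the paper reads off the kernel and cokernel of $\pi^b$ directly from the two composites rather than invoking the general "$gf$ and $fg$ isomorphisms $\Rightarrow f,g$ isomorphisms" lemma, but these are the same argument in substance.
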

\begin{proof}
By equivalence of norms, there exist positive integers $a$ and $b$ such that
\begin{equation*}
\pi^aM^\circ_1\subseteq M^\circ_2\subseteq \pi^{-b}M^\circ_1,
\end{equation*}
inducing $\mathcal{B}$-module morphisms
\begin{equation*}
\begin{xy}
\xymatrix{
f_1: \Tor_s^{A^\circ}(\mathcal{U}, M^\circ_1)\ar[r]^{\pi^a}& \Tor_s^{A^\circ}(\mathcal{U}, M^\circ_2)\ar[r]^{\pi^b}& \Tor_s^{A^\circ}(\mathcal{U}, M^\circ_1)}
\end{xy}
\end{equation*}
and
\begin{equation*}
\begin{xy}
\xymatrix{
f_2: \Tor_s^{A^\circ}(\mathcal{U}, M^\circ_2)\ar[r]^{\pi^b}& \Tor_s^{A^\circ}(\mathcal{U}, M_1^\circ)\ar[r]^{\pi^a}& \Tor_s^{A^\circ}(\mathcal{U}, M^\circ_2)}
\end{xy}
\end{equation*}
both of which are simply multiplication by $\pi^{a+b}$ by functoriality. Thus the kernel of
\begin{equation*}
\pi^b: \Tor_s^{A^\circ}(\mathcal{U}, M^\circ_2)\to \Tor_s^{A^\circ}(\mathcal{U}, M^\circ_1)
\end{equation*}
is annihilated by some positive power of $\pi$ by looking at $f_2$, likewise for the cokernel by looking at $f_1$.
\end{proof}

\begin{lemma}
\label{sequenceoftorsion}
Suppose 
\begin{equation*}
\begin{xy}
\xymatrix{
L\ar[r]^f & M\ar[r]^g& N}
\end{xy}
\end{equation*}
is an exact sequence of $R$-modules, $N$ has bounded $\pi$-torsion, and $L\in \mathcal{BT}$, i.e. there exists a positive integer $a$ such that $\pi^aL=0$. Then $M$ has bounded $\pi$-torsion.
\end{lemma}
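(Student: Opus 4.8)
The statement is an elementary diagram chase, so the plan is simply to find a single power of $\pi$ that annihilates every $\pi$-torsion element of $M$. Since $N$ has bounded $\pi$-torsion, fix a positive integer $b$ such that $\pi^b$ kills $\pi\mathrm{-tor}(N)$, and recall we are given a positive integer $a$ with $\pi^a L=0$. I claim $\pi^{a+b}$ annihilates $\pi\mathrm{-tor}(M)$, which gives the result.

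To see this, take an arbitrary $m\in M$ with $\pi^n m=0$ for some $n\geq 0$. First I would push $m$ forward: $\pi^n g(m)=g(\pi^n m)=0$, so $g(m)\in \pi\mathrm{-tor}(N)$, and hence $\pi^b g(m)=0$, i.e.\ $g(\pi^b m)=0$. Next I would use exactness at $M$: $\pi^b m\in\ker g=\im f$, so there exists $\ell\in L$ with $f(\ell)=\pi^b m$. Finally, multiplying by $\pi^a$ and using $\pi^a\ell=0$ gives $\pi^{a+b}m=\pi^a f(\ell)=f(\pi^a\ell)=0$, as wanted.

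There is no real obstacle here; the only point to be slightly careful about is that ``bounded $\pi$-torsion'' for $N$ means its $\pi$-torsion submodule (not $N$ itself) is killed by a fixed power of $\pi$, which is exactly what is used when passing from $g(m)\in\pi\mathrm{-tor}(N)$ to $\pi^b g(m)=0$. Everything else is functoriality of multiplication by $\pi$ and exactness of the given sequence.
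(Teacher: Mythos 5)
Your proof is correct and follows essentially the same route as the paper's: push a $\pi$-torsion element of $M$ into $N$, use boundedness there to land $\pi^b m$ in $\ker g = \im f$, pull back to $L$, and kill it with $\pi^a$. The only difference is that you spell out the intermediate steps a bit more explicitly.
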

\begin{proof}
Let $b$ be an integer such that $\pi^b$ annihilates every $\pi$-torsion element of $N$, and let $x\in M$ be a $\pi$-torsion element. Then its image $g(x)$ is $\pi$-torsion in $N$, so $\pi^bg(x)=0$, and $\pi^bx$ has some preimage $y$ in $L$. By assumption $\pi^ay=0$, so $\pi^{a+b}x=0$.
\end{proof}
\begin{lemma}
\label{sesstrictiff}
Let
\begin{equation*}
0\to L \to  M \to  N\to 0
\end{equation*}
be a strict short exact sequence of normed left $A$-modules. Assume that tensoring with $U$ yields a short exact sequence
\begin{equation*}
\begin{xy}
\xymatrix{
0 \ar[r] & U\otimes_A L \ar[r]^f & U\otimes_A M\ar[r]^g& U\otimes_A N \ar[r]& 0.
}
\end{xy}
\end{equation*}
Then this sequence is strict with respect to the tensor semi-norms if and only if the following condition is satisfied:\\
The induced map 
\begin{equation*}
\pi\mathrm{-tor}(\mathcal{U}\otimes M^\circ)\to \pi\mathrm{-tor}(\mathcal{U}\otimes N^\circ)
\end{equation*}
is an epimorphism in $\mathcal{Q}$, i.e. there exists a non-negative integer $r$ such that for any $\pi$-torsion element $x\in \mathcal{U}\otimes N^\circ$, $\pi^rx$ is the image of some $\pi$-torsion element of $\mathcal{U}\otimes M^\circ$.
\end{lemma}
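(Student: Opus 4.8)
\emph{Proof plan.} The plan is to translate everything into statements about the $R$-model tensors $T_\bullet := \mathcal{U}\otimes_{A^\circ}(-)^\circ$ and to notice that the surjection $g$ is strict for free, so that all the content sits in the injection $f$. I would first fix convenient norms: since the given sequence is strict, the norm on $L$ is equivalent to the subspace norm from $M$ and the norm on $N$ to the quotient norm, and replacing them by these is harmless both for the strictness assertion (an equivalence-class property) and, by Lemmas \ref{isoequivtorsion} and \ref{equivalenttorsion}, for the $\mathcal{Q}$-isomorphism type of the map $\pi\mathrm{-tor}(\mathcal{U}\otimes M^\circ)\to\pi\mathrm{-tor}(\mathcal{U}\otimes N^\circ)$. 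So we may assume $L^\circ = L\cap M^\circ$ and $N^\circ = \im(M^\circ\to N)$, whence
\begin{equation*}
0\to L^\circ\to M^\circ\to N^\circ\to 0
\end{equation*}
is exact. Put $T_L = \mathcal{U}\otimes_{A^\circ}L^\circ$, $T_M = \mathcal{U}\otimes_{A^\circ}M^\circ$, $T_N = \mathcal{U}\otimes_{A^\circ}N^\circ$; applying $\mathcal{U}\otimes_{A^\circ}-$ yields an exact sequence $T_L\xrightarrow{f'}T_M\xrightarrow{h}T_N\to 0$, so $h$ is onto and $\ker h$ equals the image of $f'$. Since $|A|\setminus\{0\} = |K^*|$ forces $A^\circ\otimes_R K = A$, we have $U\otimes_A M = T_M\otimes_R K$ (and likewise for $L, N$); and since $\mathcal{U}\twoheadrightarrow U^\circ$ and $|U|\setminus\{0\} = |K^*|$, Corollary \ref{tensorunitballgen} lets me take, up to equivalence, the unit ball of $U\otimes_A M$ to be the image $\overline{T_M}$ of $T_M$ in $T_M\otimes_R K$, i.e. $T_M/\pi\mathrm{-tor}(T_M)$, and similarly $\overline{T_L}$, $\overline{T_N}$; then $f$, $g$ are the localizations of $f'$, $h$.

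Now $g$ is automatically strict: being the localization of the surjection $h$, it carries $\overline{T_M}$ onto $\overline{T_N}$, hence maps the unit ball onto the unit ball and is strict by Lemma \ref{strictcriterion} (with $a=0$). So the sequence is strict if and only if $f$ is. Since $f$ is injective and $f(\overline{T_L})\subseteq\overline{T_M}\cap\im f$, with both of these $R$-lattices in the $K$-space $\im f$, their quotient is $\pi$-torsion, and Lemma \ref{strictcriterion} shows $f$ is strict exactly when $(\overline{T_M}\cap\im f)/f(\overline{T_L})$ is \emph{bounded} $\pi$-torsion. I would compute this quotient inside $T_M\otimes_R K$. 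Write $P=\pi\mathrm{-tor}(T_M)$ and $Q=\pi\mathrm{-tor}(T_N)$, and note $h(P)\subseteq Q$. Exactness of localization gives $\im f = \ker(h\otimes_R K)$, so $\overline{T_M}\cap\im f$ is the image of $h^{-1}(Q)$, namely $h^{-1}(Q)/P$ (using $P\subseteq h^{-1}(Q)$), while $f(\overline{T_L})$ is the image of the image of $f'$, i.e. of $\ker h$, namely $(\ker h+P)/P$; hence
\begin{equation*}
(\overline{T_M}\cap\im f)/f(\overline{T_L}) = h^{-1}(Q)/(\ker h+P)\cong Q/h(P),
\end{equation*}
the last isomorphism coming from the surjection $h^{-1}(Q)\twoheadrightarrow Q$ with kernel $\ker h$, which sends $P$ onto $h(P)$.

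Therefore $f$ is strict if and only if $Q/h(P)\in\mathcal{BT}$, i.e. if and only if the induced map $\pi\mathrm{-tor}(\mathcal{U}\otimes M^\circ)\to\pi\mathrm{-tor}(\mathcal{U}\otimes N^\circ)$ is an epimorphism in $\mathcal{Q}$ (equivalently: there is $r$ with $\pi^r\,\pi\mathrm{-tor}(T_N)\subseteq h(\pi\mathrm{-tor}(T_M))$), and together with the automatic strictness of $g$ this is exactly the claim. The step I expect to be the main obstacle is the bookkeeping in the middle paragraph: correctly identifying $\overline{T_M}\cap\im f$ and $f(\overline{T_L})$ as explicit submodules of $T_M\otimes_R K$ and tracking their $\pi$-torsion submodules, together with checking that the reductions ``up to equivalent norms'' genuinely leave both sides of the desired equivalence unchanged.
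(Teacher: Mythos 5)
Your argument is correct and rests on the same scaffolding as the paper's: normalize so that the unit balls form a short exact sequence, use the two-row commutative diagram linking the $R$-model tensor $T_\bullet=\mathcal{U}\otimes_{A^\circ}(-)^\circ$ to $U\otimes_A(-)$, observe that the vertical maps have $\pi$-torsion kernels and unit-ball images, and note that $g$ is strict for free so only $f$ is at issue. The difference is one of packaging rather than route. The paper proves the two implications of the ``if and only if'' separately by chasing elements around the diagram. You instead feed Lemma~\ref{strictcriterion} into the observation that $f(\overline{T_L})\subseteq\overline{T_M}\cap\im f$ are two lattices in $\im f$, reducing strictness of $f$ to the single condition that the quotient lies in $\mathcal{BT}$, and then identify that quotient explicitly as $h^{-1}(Q)/(\ker h+P)\cong Q/h(P)$, which is visibly the cokernel of $\pi\mathrm{-tor}(T_M)\to\pi\mathrm{-tor}(T_N)$. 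This yields the equivalence in one stroke and makes the ``obstruction to strictness'' a concrete module; the paper's version is somewhat more elementary but repeats the same diagram chase twice. One small point worth making explicit in a polished write-up: you quietly replace the image of $U^\circ\otimes_{A^\circ}M^\circ$ (what Corollary~\ref{tensorunitballgen} provides) by the image of $\mathcal{U}\otimes_{A^\circ}M^\circ$; this is fine because $\mathcal{U}\twoheadrightarrow U^\circ$ forces the two images in $U\otimes_A M$ to coincide, but it deserves a sentence.
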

\begin{proof}
Without loss of generality (using Lemmas \ref{isoequivtorsion} and \ref{equivalenttorsion}), we can assume that $L$ and $N$ are equipped with the subspace and quotient norm, respectively, and that $M$ (and hence $L$ and $N$) has discrete value set equal to $|K^*|$.\\
\\
Thus we have a short exact sequence
\begin{equation*}
0\to  L^\circ \to M^\circ \to N^\circ \to 0.
\end{equation*}
Note that the map $M^\circ \to N^\circ$ is indeed surjective since $M$ has discrete value set. Thus we obtain the following commutative diagram with exact rows:
\begin{equation*}
\begin{xy}
\xymatrix{ & \mathcal{U}\otimes_{A^\circ}L^\circ\ar[r]^{f^\circ} \ar[d] & \mathcal{U}\otimes_{A^\circ}M^\circ\ar[r]^{g^\circ} \ar[d] & \mathcal{U}\otimes_{A^\circ}N^\circ\ar[r] \ar[d] & 0\\
0 \ar[r] & U\otimes_A L\ar[r]^f & U\otimes_A M\ar[r]^g & U\otimes_A N\ar[r] & 0}
\end{xy}
\end{equation*}
The kernel of each vertical arrow consists of the $\pi$-torsion submodule, and by Lemma \ref{tensorunitball}, the images of the vertical maps are the unit balls of the terms in the second row with respect to the tensor semi-norms. Given an element $x$ in some term in the first row, we will denote its image under the vertical map by $\overline{x}$.\\
Note that by Theorem \ref{surjtensorstrict}, $g$ is always strict.\\
\\
Suppose now that the condition stated in the Lemma is satisfied, and let $x\in U\otimes L$ such that $|f(x)|\leq 1$, i.e. there exists some $y\in \mathcal{U}\otimes M^\circ$ such that $\overline{y}=f(x)$. Note that $g(\overline{y})=0$ implies that $g^\circ(y)$ is $\pi$-torsion in $\mathcal{U}\otimes N^\circ$. Then $\pi^rg^\circ(y)$ is the image of some $\pi$-torsion element $z$ of $\mathcal{U}\otimes M^\circ$ by assumption. Thus $\pi^ry-z$ is in the kernel of $g^\circ$ and hence has a preimage $u$ in $\mathcal{U}\otimes L^\circ$. \\
\\
Since $z$ is $\pi$-torsion, $\overline{\pi^ry-z}=\pi^rf(x)$, and by commutativity of the diagram and injectivity of $f$, we have $\overline{u}=\pi^rx$, so $|\pi^rx|\leq 1$ in $U\otimes L$. Thus $\im f \cap (U\otimes M)^\circ \subseteq f(\pi^{-r}(U\otimes L)^\circ)$, proving that $f$ is strict.\\
\\
Conversely, suppose the map $f$ is strict, i.e. there exists an integer $r$ such that $\im f\cap (U\otimes M)^\circ \subseteq f(\pi^{-r}(U\otimes L)^\circ)$. Without loss of generality, we can choose $r$ to be non-negative. Now let $x\in \mathcal{U}\otimes N^\circ$ be a $\pi$-torsion element. By surjectivity, there exists some $y\in \mathcal{U}\otimes M^\circ$ such that $x=g^\circ(y)$. Then $\overline{y}\in\ker g$ has norm $\leq 1$ in $U\otimes M$, and hence $\overline{y}\in \im f \cap (U\otimes M)^\circ$. By definition of $r$, $\pi^r\overline{y}$ has a preimage in $(U\otimes L)^\circ$, and thus a preimage $z$ in $\mathcal{U}\otimes L^\circ$. \\
Thus $\pi^ry-f^\circ(z)\in \mathcal{U}\otimes M^\circ$ is $\pi$-torsion, and $\pi^rx=g^\circ(\pi^ry-f^\circ(z))$ is the image of a $\pi$-torsion element. 
\end{proof}
The condition in Lemma \ref{sesstrictiff} is in particular satisfied if $\mathcal{U}$ is a flat right $A^\circ$-module, as the module $\mathcal{U}\otimes N^\circ\subseteq \mathcal{U}\otimes_{A^\circ} N\cong U\otimes_A N$ is then $\pi$-torsionfree, or more generally whenever $\mathcal{U}\otimes N^\circ$ has bounded $\pi$-torsion.

\subsection{Completed tensor products and cochain complexes}

Finally, we need a variant of the results above to deal with more general cochain complexes. For this, we keep the set-up of the previous subsection, assuming additionally that $U$ is flat.\\
Consider a cochain complex $(C^{\bullet}, \partial)$ of left Banach $A$-modules. We assume without loss of generality that $|C^j|=|K^*|$ for each $j$, and we suppose additionally that for each $j$, the differential $\partial^j$ is strict.\\
\\
It is worth pointing out that we do not assume $A$ and $U$ to be themselves complete at this point.\\
We will state all our results for the case of left $A$-modules -- the corresponding statements for right $A$-modules can be proved mutatis mutandis.\\
Note that it follows from Lemma \ref{closedimagestrict} that the images $\im \partial^j$ are closed in $C^{j+1}$.\\
\\
Unless explicitly stated otherwise, the modules $\im\partial^{j-1}$ and $\ker\partial^j$ are equipped with the subspace norms induced from the normed $A$-module $C^j$. All tensor products will be equipped with the corresponding tensor semi-norms, and we will write e.g. $\coim (U\otimes \partial^j)$ when we equip the tensor product $U\otimes \coim \partial^j$ with the quotient semi-norm induced from $U\otimes C^{j}\to U\otimes \coim\partial^j$, or $\im (U\otimes \partial^{j-1})$ when we equip the tensor product $U\otimes \im \partial^{j-1}$ with the subspace semi-norm inherited from $U\otimes_A C^j$. We will sometimes abbreviate $\im\partial^j$ to $\im$ when it is obvious which term in the complex we are considering. \\
When we say that two semi-normed $K$-vector spaces are isomorphic, it will be as topological vector spaces, except when we say explicitly that we consider them as abstract vector spaces. The same holds for semi-normed $A$-modules.\\ 
\\
Since $\ker\partial^j$ is closed in $C^j$, it is Banach, and $\im\partial^{j-1}$ is assumed to be closed. Hence the quotient norm induced from the short exact sequence
\begin{equation*}
0 \to \im\partial^{j-1}\to \ker\partial^j\to \mathrm{H}^j(C^{\bullet})\to 0
\end{equation*}
turns $\mathrm{H}^j(C^{\bullet})$ into a Banach $A$-module.\\
With this choice of norm, the short exact sequence above consists of strict morphisms by definition.
\begin{proposition}
\label{complexstrict}
Suppose that for each $j$, the following is satisfied:
\begin{enumerate}[(i)]
\item The $R$-module $\mathcal{U} \otimes_{A^{\circ}} \mathrm{H}^j(C^{\bullet})^\circ$ has bounded $\pi$-torsion.
\item The morphism $U\otimes_A \ker\partial^j\to U\otimes_A C^j$ is strict.
\end{enumerate}
Then the complex $U\otimes_A C^{\bullet}$ consists of strict morphisms, and the canonical morphism
\begin{equation*}
U\widehat{\otimes}_A \mathrm{H}^j(C^{\bullet})\to \mathrm{H}^j(U\widehat{\otimes}_A C^{\bullet})
\end{equation*}
is an isomorphism for each $j$.
\end{proposition}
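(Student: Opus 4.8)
The proof proposal proceeds by reducing everything to the case of short exact sequences, which has already been analyzed in Lemma \ref{sesstrictiff}. The basic observation is that a strict cochain complex $(C^\bullet, \partial)$ with strict differentials and closed images can be broken up, for each $j$, into the two strict short exact sequences
\begin{equation*}
0\to \ker\partial^j\to C^j\to \im\partial^j\to 0
\end{equation*}
and
\begin{equation*}
0\to \im\partial^{j-1}\to \ker\partial^j\to \mathrm{H}^j(C^\bullet)\to 0.
\end{equation*}
The differential $\partial^j$ itself factors as the composition of the strict surjection $C^j\twoheadrightarrow \im\partial^j$, the inclusion $\im\partial^j\hookrightarrow \ker\partial^{j+1}$, and the strict inclusion $\ker\partial^{j+1}\hookrightarrow C^{j+1}$ (this last one being hypothesis (ii)). So the strictness of $U\otimes_A\partial^j$, and subsequently the cohomology comparison, will all follow once we control what $U\otimes_A-$ does to the two displayed short exact sequences and to these inclusions.

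\textbf{Key steps.} First I would show that tensoring the first short exact sequence $0\to \ker\partial^j\to C^j\to \im\partial^j\to 0$ with $U$ keeps it short exact and strict. Since $U$ is flat as an abstract $A$-module, the sequence $0\to U\otimes_A\ker\partial^j\to U\otimes_A C^j\to U\otimes_A\im\partial^j\to 0$ is exact; it is strict because $U\otimes_A\ker\partial^j\to U\otimes_A C^j$ is strict by hypothesis (ii), and then the quotient map $U\otimes_A C^j\to U\otimes_A\im\partial^j$ is automatically strict (a surjection whose kernel is a strict subspace, with the target carrying the quotient seminorm; alternatively invoke Theorem \ref{surjtensorstrict}). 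Next, for the second short exact sequence $0\to \im\partial^{j-1}\to\ker\partial^j\to\mathrm{H}^j(C^\bullet)\to 0$, flatness again gives exactness of $0\to U\otimes_A\im\partial^{j-1}\to U\otimes_A\ker\partial^j\to U\otimes_A\mathrm{H}^j(C^\bullet)\to 0$, and here I invoke Lemma \ref{sesstrictiff}: its strictness is equivalent to the map $\pi\text{-tor}(\mathcal{U}\otimes\ker\partial^j)^\circ\to\pi\text{-tor}(\mathcal{U}\otimes\mathrm{H}^j(C^\bullet)^\circ)$ being an epimorphism in $\mathcal{Q}$ — and this holds because hypothesis (i) says $\mathcal{U}\otimes_{A^\circ}\mathrm{H}^j(C^\bullet)^\circ$ has bounded $\pi$-torsion, so the target of that map is already zero in $\mathcal{Q}$ (as noted in the remark following Lemma \ref{sesstrictiff}). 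Then I assemble: $U\otimes_A\partial^j$ factors as strict surjection $\circ$ inclusion of $U\otimes_A\im\partial^j$ into $U\otimes_A\ker\partial^{j+1}$ $\circ$ strict inclusion $U\otimes_A\ker\partial^{j+1}\hookrightarrow U\otimes_A C^{j+1}$. The middle inclusion $U\otimes_A\im\partial^j\hookrightarrow U\otimes_A\ker\partial^{j+1}$ I claim is strict with closed image: its cokernel is $U\otimes_A\mathrm{H}^{j+1}(C^\bullet)$, and the short exact sequence $0\to U\otimes\im\partial^j\to U\otimes\ker\partial^{j+1}\to U\otimes\mathrm{H}^{j+1}\to 0$ was just shown to be strict, so the inclusion is strict. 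Using Lemma \ref{strictcomposition} (composing a strict injection with the strict surjection, via the injectivity clause, and then with the strict injection from (ii)) — or more simply, composing two strict injections (closed image inside closed image is closed) and precomposing with a strict surjection onto that image — I conclude $U\otimes_A\partial^j$ is strict. For the cohomology statement, strictness of the complex $U\otimes_A C^\bullet$ means $\mathrm{H}^j(U\otimes_A C^\bullet)$ is the cokernel of the strict injection $U\otimes_A\im\partial^{j-1}\hookrightarrow U\otimes_A\ker\partial^j$, i.e. $U\otimes_A\mathrm{H}^j(C^\bullet)$ with its quotient seminorm, with the induced comparison map a seminorm-preserving (hence topological) isomorphism; applying $\widehat{(-)}$ and Proposition \ref{strictcompletionexact} together with the isomorphism $U\widehat\otimes_A\ker\partial^j = \widehat{U\otimes_A\ker\partial^j}$ etc., the completion of this strict short exact sequence stays strict exact, so $\mathrm{H}^j(U\widehat\otimes_A C^\bullet)\cong \widehat{U\otimes_A\mathrm{H}^j(C^\bullet)} = U\widehat\otimes_A\mathrm{H}^j(C^\bullet)$ as desired.

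\textbf{Main obstacle.} The delicate point is bookkeeping with the seminorms: asserting that $U\otimes_A\partial^j$ is strict requires knowing that the image $U\otimes_A\im\partial^j$, when given the subspace seminorm from $U\otimes_A C^{j+1}$, is topologically isomorphic to the same space given the quotient seminorm from $U\otimes_A C^j$ — i.e. that "tensor a strict surjection" and "tensor a strict injection" are compatible. This is exactly where hypothesis (ii) and Theorem \ref{surjtensorstrict} together with Lemma \ref{strictcomposition} do the work, but one must be careful that the intermediate object $\ker\partial^{j+1}$ has its subspace seminorm throughout and that the factorization through it is legitimate; the flatness of $U$ is what guarantees all the relevant tensored sequences are short exact at the algebraic level so that these seminorm comparisons are comparisons of the same underlying vector space. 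A secondary subtlety is ensuring that the canonical map $U\widehat\otimes_A\mathrm{H}^j(C^\bullet)\to \mathrm{H}^j(U\widehat\otimes_A C^\bullet)$ really is the one induced by functoriality and agrees with the isomorphism produced by the completion argument, which is a routine but necessary diagram chase using the universal property of $\widehat\otimes$.
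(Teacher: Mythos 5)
Your proposal is correct and follows essentially the same route as the paper's proof: split the complex via the two strict short exact sequences $0\to\ker\partial^j\to C^j\to\coim\partial^j\to 0$ and $0\to\im\partial^{j-1}\to\ker\partial^j\to\mathrm{H}^j(C^\bullet)\to 0$, use Lemma \ref{sesstrictiff} together with hypothesis (i) to get strictness of the second after tensoring, factor $U\otimes\partial$ through these pieces and apply Lemma \ref{strictcomposition}, and then pass to completions via Proposition \ref{strictcompletionexact}. The paper writes the factorization one index lower and spells out the chain of isomorphisms identifying $\widehat{\im(U\otimes\partial)}$ and $\widehat{\ker(U\otimes\partial)}$ more explicitly, but these are presentational differences only.
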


\begin{proof}
By assumption, $\mathcal{U}\otimes \mathrm{H}^j(C^\bullet)^\circ$ has bounded $\pi$-torsion, so we can apply Lemma \ref{sesstrictiff} to see that
\begin{equation*}
0\to U\otimes_A \im\partial^{j-1}\to U\otimes_A \ker\partial^j\to U\otimes_A \mathrm{H}^j(C^{\bullet})\to 0
\end{equation*}
is strict exact.\\
\\
Now the map $U\otimes_A C^{j-1}\to  U\otimes_A C^j$ factors as
\begin{equation*}
U\otimes_A C^{j-1}\to U\otimes \coim\partial^{j-1}\cong U\otimes \im\partial^{j-1}\to U\otimes \ker\partial^j\to U\otimes C^j,
\end{equation*}
where the first map is a strict surjection by Theorem \ref{surjtensorstrict}, the second map is a homeomorphism by strictness of $\partial^{j-1}$, and the third map is a strict injection by the above. Since we also assume that the fourth map is a strict injection, Lemma \ref{strictcomposition} now implies that $U\otimes C^{\bullet}$ consists of strict morphisms.\\
\\
We have also seen above that the sequence
\begin{equation*}
0 \to U\otimes \im\partial^{j-1}\to U\otimes \ker \partial^j\to U\otimes \mathrm{H}^j(C^{\bullet})\to 0
\end{equation*}
is strict exact, so that its completion
\begin{equation*}
0 \to U\widehat{\otimes} \im\partial^{j-1}\to U\widehat{\otimes} \ker\partial^j\to U\widehat{\otimes} \mathrm{H}^j(C^{\bullet})\to 0
\end{equation*}
is also exact by Proposition \ref{strictcompletionexact}. We will now identify the first two terms with the corresponding images and kernels in the complex $U\widehat{\otimes} C^{\bullet}$, i.e. we show that the vertical arrows in the commutative diagram
\begin{equation*}
\begin{xy}
\xymatrix{
0 \ar[r] & U\widehat{\otimes} \im \ar[r] \ar[d] & U\widehat{\otimes} \ker \ar[r] \ar[d] & U\widehat{\otimes} \mathrm{H}^j(C^\bullet) \ar[r] \ar[d] & 0\\
0 \ar[r] & \im (U\widehat{\otimes} \partial^{j-1})\ar[r] & \ker (U\widehat{\otimes} \partial^j)\ar[r] & \mathrm{H}^j(U\widehat{\otimes} C^\bullet)\ar[r] & 0 
}
\end{xy}
\end{equation*}
are isomorphisms, completing the proof.\\
\\
Note that by strictness of $U\otimes \partial$, we can invoke Proposition \ref{strictcompletionexact} to identify $\im(U\widehat{\otimes} \partial^{j-1})$  and $\ker (U\widehat{\otimes} \partial^j)$ with the completions of the image and kernel of $U\otimes \partial$, respectively.\\
But now we have natural isomorphisms of normed $K$-vector spaces
\begin{equation*}
U\widehat{\otimes} \im\cong U\widehat{\otimes} \coim\cong \widehat{\coim(U\otimes \partial^{j-1})}\cong \widehat{\im(U\otimes \partial^{j-1})}.
\end{equation*}
We will explain each of these isomorphisms in turn. The first isomorphism is due to the strictness of $\partial^{j-1}$. The second isomorphism follows from Theorem \ref{surjtensorstrict} applied to the strict surjection $C^{j-1}\to \coim\partial^{j-1}$. For the third isomorphism, note that $U\otimes \partial^{j-1}$ is a strict morphism by the above, so
\begin{equation*}
\coim(U\otimes \partial^{j-1})\cong \im(U\otimes \partial^{j-1})
\end{equation*}
by definition, and completion gives the desired isomorphism.\\
This proves that the first vertical arrow is an isomorphism.\\
\\
Similarly, since we assume that $U\otimes \ker\partial^j\to U\otimes C^j$ is strict, we have
\begin{equation*}
U\otimes \ker \partial^j\cong \ker(U\otimes \partial^j),
\end{equation*}
and hence
\begin{equation*}
U\widehat{\otimes} \ker\partial^j\cong \widehat{\ker(U\otimes \partial^j)}.
\end{equation*}
Now Proposition \ref{strictcompletionexact} implies again
\begin{equation*}
U\widehat{\otimes} \ker \partial^j\cong \ker(U\widehat{\otimes} \partial^j),
\end{equation*}
proving that the second vertical arrow is an isomorphism.\\
\\
By exactness of the rows of the diagram, it therefore follows that the third arrow is also an isomorphism, and
\begin{equation*}
U\widehat{\otimes} \mathrm{H}^j(C^{\bullet})\cong \mathrm{H}^j(U\widehat{\otimes} C^{\bullet})
\end{equation*}
as required.
\end{proof}

\begin{corollary}
\label{flatunitballcomplex}
Suppose that $\mathcal{U}$ is a flat right $A^\circ$-module. Then $U\otimes_A C^\bullet$ consists of strict morphisms, and 
\begin{equation*}
U\widehat{\otimes}_A \mathrm{H}^j(C^\bullet)\to \mathrm{H}^j(U\widehat{\otimes}C^\bullet)
\end{equation*}
is an isomoprhism for each $j$.
\end{corollary}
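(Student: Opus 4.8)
The plan is to deduce Corollary \ref{flatunitballcomplex} directly from Proposition \ref{complexstrict} by checking that its two hypotheses are automatically satisfied when $\mathcal{U}$ is flat over $A^\circ$. So the whole argument reduces to verifying hypotheses (i) and (ii) of the Proposition under the flatness assumption.

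For hypothesis (i), I would argue as follows. Since $\mathrm{H}^j(C^\bullet)$ is a normed $A$-module, its unit ball $\mathrm{H}^j(C^\bullet)^\circ$ is an $A^\circ$-submodule of the $K$-vector space $\mathrm{H}^j(C^\bullet)$, hence in particular $\pi$-torsionfree as an abstract $R$-module. Because $\mathcal{U}$ is flat over $A^\circ$, the natural map $\mathcal{U}\otimes_{A^\circ}\mathrm{H}^j(C^\bullet)^\circ \to \mathcal{U}\otimes_{A^\circ}\mathrm{H}^j(C^\bullet) \cong \mathcal{U}\otimes_{A^\circ}(\mathrm{H}^j(C^\bullet)^\circ\otimes_R K)$ is injective, and the target is a $K$-vector space (using that $\mathcal{U}\otimes_R K \cong U$), hence $\pi$-torsionfree. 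Therefore $\mathcal{U}\otimes_{A^\circ}\mathrm{H}^j(C^\bullet)^\circ$ is $\pi$-torsionfree, which is in particular bounded $\pi$-torsion (killed by $\pi^0$). This is exactly the remark already made after Lemma \ref{sesstrictiff}, now applied with $N=\mathrm{H}^j(C^\bullet)$.

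For hypothesis (ii), I need to show that $U\otimes_A\ker\partial^j \to U\otimes_A C^j$ is strict. The key point is that $U^\circ$ is flat over $A^\circ$ as well: since $U$ has value set $|K^*|$, $U^\circ\otimes_R K \cong U$, so one may take $\mathcal{U}=U^\circ$ in the preceding discussion, and $\mathcal{U}$ being flat means $U^\circ$ is flat. Now apply Lemma \ref{sesstrictiff} (or rather its short-exact-sequence set-up, reformulated for the inclusion) to the strict short exact sequence $0\to \ker\partial^j\to C^j\to \coim\partial^j\to 0$ of normed $A$-modules: tensoring with $U$ over $A$ stays exact because $U$ is flat, and the relevant $\pi$-torsion condition holds since $U^\circ\otimes_{A^\circ}(\coim\partial^j)^\circ$ embeds into the $K$-vector space $U\otimes_A\coim\partial^j$ and is thus $\pi$-torsionfree. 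Hence the sequence $0\to U\otimes_A\ker\partial^j\to U\otimes_A C^j\to U\otimes_A\coim\partial^j\to 0$ is strict, which gives strictness of the first map — this is precisely hypothesis (ii).

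Once both hypotheses are verified, Proposition \ref{complexstrict} immediately yields that $U\otimes_A C^\bullet$ consists of strict morphisms and that $U\widehat{\otimes}_A\mathrm{H}^j(C^\bullet)\to \mathrm{H}^j(U\widehat{\otimes}_A C^\bullet)$ is an isomorphism, completing the proof. I expect no genuine obstacle here; the only thing to be a little careful about is making the identification $\mathcal{U}=U^\circ$ (legitimate because of the value-set hypothesis on $U$) so that flatness of $\mathcal{U}$ over $A^\circ$ delivers flatness of $U^\circ$ over $A^\circ$, which is what is needed for the torsionfreeness arguments behind both (i) and (ii). Everything else is a direct citation of Lemma \ref{sesstrictiff} and Proposition \ref{complexstrict}.
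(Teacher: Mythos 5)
Your proposal is essentially the paper's own proof: both reduce Corollary \ref{flatunitballcomplex} to Proposition \ref{complexstrict} by observing that flatness of $\mathcal{U}$ makes $\mathcal{U}\otimes_{A^\circ}\mathrm{H}^j(C^\bullet)^\circ$ and $\mathcal{U}\otimes_{A^\circ}(\coim\partial^j)^\circ$ $\pi$-torsionfree (hence of bounded $\pi$-torsion), giving condition (i) directly and condition (ii) via Lemma \ref{sesstrictiff} applied to $0\to\ker\partial^j\to C^j\to\coim\partial^j\to 0$.

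One small inaccuracy worth flagging: in your treatment of hypothesis (ii) you insert an unnecessary step, asserting that one may ``take $\mathcal{U}=U^\circ$'' and then deducing flatness of $U^\circ$. The hypothesis of the Corollary is that $\mathcal{U}$ itself is flat over $A^\circ$, and $\mathcal{U}$ need not equal $U^\circ$ (the paper explicitly keeps $\mathcal{U}$ a more general module with a surjection onto $U^\circ$). Fortunately, your actual torsionfreeness argument only uses $\mathcal{U}\otimes_{A^\circ}(\coim\partial^j)^\circ\hookrightarrow\mathcal{U}\otimes_{A^\circ}\coim\partial^j$, which holds for any flat $\mathcal{U}$; replace $U^\circ$ by $\mathcal{U}$ throughout that paragraph and the argument is exactly right.
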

\begin{proof}
As noted before, both $\mathcal{U}\otimes \mathrm{H}^j(C^\bullet)^\circ\subseteq U\otimes \mathrm{H}^j(C^\bullet)$ and $\mathcal{U}\otimes (\coim \partial^j)^\circ\subseteq U\otimes \coim$ are $\pi$-torsionfree for each $j$. \\
The sequence
\begin{equation*}
0\to \ker\partial^j\to C^j\to \coim\partial^j\to 0
\end{equation*}
is trivially strict, so applying Lemma \ref{sesstrictiff} shows that the conditions of Proposition \ref{complexstrict} are satisfied.
\end{proof}
Applying this to an exact cochain complex bounded above proves Proposition \ref{Mainresultcompltensor}.(i).\\
\\
Our next results can be viewed as an extension of this result to the case when $\mathcal{U}$ is not flat. It turns out that we can establish analogous statements as long as $\mathcal{U}$ is sufficiently close to being flat in the sense that all corresponding Tor groups should have bounded $\pi$-torsion.\\
\\
To simplify notation, we will from now on abbreviate $\Tor_s^{A^\circ}(\mathcal{U}, M)$ to $T_s(M)$ for any left $A^\circ$-module $M$.\\
Note that for $s\geq 1$, this is always a $\pi$-torsion module, as
\begin{equation*}
T_s(M)\otimes_R K=\Tor_s^A (U, M\otimes_R K)=0,
\end{equation*}
since $K$ is flat over $R$, and $U$ is assumed to be flat over $A$ (see \cite[Corollary 3.2.10]{Weibel}).
\begin{theorem}
\label{fullcomplexstrict}
Suppose that for large enough $j$, $T_s((\coim \partial^j)^\circ)$ and $T_s((\ker \partial^j)^\circ)$ have bounded $\pi$-torsion for all $s\geq 0$. Suppose further that for all $j$, the following is satisfied:
\begin{enumerate}[(i)]
\item $T_s\left(\mathrm{H}^j(C^\bullet)^\circ\right)$ has bounded $\pi$-torsion for all $s\geq 0$.
\item $T_s\left ((C^j)^\circ\right)$ has bounded $\pi$-torsion for all $s\geq 0$.
\end{enumerate}
Then the complex $U\otimes C^{\bullet}$ consists of strict morphisms, and the canonical morphism
\begin{equation*}
U\widehat{\otimes}_A \mathrm{H}^j(C^{\bullet})\to \mathrm{H}^j(U\widehat{\otimes}C^{\bullet})
\end{equation*}
is an isomorphism for each $j$.
\end{theorem}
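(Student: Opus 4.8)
The plan is to reduce everything to Proposition \ref{complexstrict}. That result, applied for each $j$, already delivers both assertions — strictness of the maps in $U\otimes_A C^\bullet$ and the isomorphism $U\widehat{\otimes}_A\mathrm{H}^j(C^\bullet)\cong\mathrm{H}^j(U\widehat{\otimes}_A C^\bullet)$ — provided that for each $j$ the module $\mathcal{U}\otimes_{A^\circ}\mathrm{H}^j(C^\bullet)^\circ = T_0(\mathrm{H}^j(C^\bullet)^\circ)$ has bounded $\pi$-torsion (this is the $s=0$ case of hypothesis (i)) and that the canonical map $U\otimes_A\ker\partial^j\to U\otimes_A C^j$ is strict. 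The first is free, so the whole proof comes down to establishing the second for every $j$. Since $U$ is flat it keeps the trivially strict short exact sequence $0\to\ker\partial^j\to C^j\to\coim\partial^j\to 0$ exact, so by Lemma \ref{sesstrictiff} together with the remark following it, strictness of $U\otimes_A\ker\partial^j\to U\otimes_A C^j$ will follow once we know that $\mathcal{U}\otimes_{A^\circ}(\coim\partial^j)^\circ=T_0((\coim\partial^j)^\circ)$ has bounded $\pi$-torsion for each $j$.

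To obtain this I would prove, by \emph{descending} induction on $j$, the stronger statement that $T_s((\ker\partial^j)^\circ)$ and $T_s((\coim\partial^j)^\circ)$ have bounded $\pi$-torsion for all $s\geq 0$ and all $j$; the base case is precisely the ``for large enough $j$'' hypothesis. Note first that strictness of $\partial^{j}$ identifies $\coim\partial^{j}\cong\im\partial^{j}$ as topological $A$-modules, hence as a single abstract $A$-module carrying two equivalent norms, so Lemma \ref{equivalenttorsion} lets us pass freely between $T_s((\coim\partial^j)^\circ)$ and $T_s((\im\partial^j)^\circ)$ in $\mathcal{Q}$. Assuming the claim at level $j$, I would first feed the short exact sequence of unit balls $0\to(\im\partial^{j-1})^\circ\to(\ker\partial^j)^\circ\to\mathrm{H}^j(C^\bullet)^\circ\to 0$ into the long exact sequence of $\Tor^{A^\circ}(\mathcal{U},-)$: the three-term exact piece
\begin{equation*}
T_{s+1}(\mathrm{H}^j(C^\bullet)^\circ)\to T_s((\im\partial^{j-1})^\circ)\to T_s((\ker\partial^j)^\circ)
\end{equation*}
has left-hand term in $\mathcal{BT}$ (it is $\pi$-torsion since the index is $\geq 1$, and bounded by hypothesis (i)) and right-hand term of bounded $\pi$-torsion by the inductive hypothesis, so Lemma \ref{sequenceoftorsion} forces $T_s((\im\partial^{j-1})^\circ)$, hence $T_s((\coim\partial^{j-1})^\circ)$, to have bounded $\pi$-torsion for every $s\geq 0$.

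Next I would feed this into the long exact sequence attached to $0\to(\ker\partial^{j-1})^\circ\to(C^{j-1})^\circ\to(\coim\partial^{j-1})^\circ\to 0$: the piece
\begin{equation*}
T_{s+1}((\coim\partial^{j-1})^\circ)\to T_s((\ker\partial^{j-1})^\circ)\to T_s((C^{j-1})^\circ)
\end{equation*}
again has left-hand term in $\mathcal{BT}$ (bounded by the previous step, and $\pi$-torsion since the index is $\geq 1$) and right-hand term of bounded $\pi$-torsion by hypothesis (ii), so Lemma \ref{sequenceoftorsion} gives that $T_s((\ker\partial^{j-1})^\circ)$ has bounded $\pi$-torsion for all $s\geq 0$; this closes the induction. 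As in the proof of Lemma \ref{sesstrictiff}, I would arrange at the start — after replacing norms by equivalent ones, which is harmless by Lemmas \ref{isoequivtorsion} and \ref{equivalenttorsion} — that each $C^j$ has value set $|K^*|$ and that $\ker\partial^j$, $\im\partial^{j-1}$, $\coim\partial^j$, $\mathrm{H}^j(C^\bullet)$ carry the induced subspace or quotient norms, so that the sequences of unit balls above are genuinely short exact (using that the images in question are closed by strictness). With the claim established, $\mathcal{U}\otimes_{A^\circ}(\coim\partial^j)^\circ$ has bounded $\pi$-torsion for all $j$, hence $U\otimes_A\ker\partial^j\to U\otimes_A C^j$ is strict for all $j$, and Proposition \ref{complexstrict} applies.

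The step I expect to be the main obstacle is the bookkeeping in this descending induction: one must check that the two long exact sequences chain together in the right order (the information about $(\coim\partial^{j-1})^\circ$ produced by the first sequence is exactly what the second sequence consumes), that the $s=0$ terms are handled correctly — there $T_0$ is not automatically $\pi$-torsion, so the hypotheses on $(C^j)^\circ$ and $\mathrm{H}^j(C^\bullet)^\circ$ are genuinely needed rather than coming for free — and that the preliminary reduction to convenient norms is compatible with all of the short exact sequences of unit balls invoked. Everything else is a direct appeal to Lemmas \ref{sesstrictiff} and \ref{sequenceoftorsion} and to Proposition \ref{complexstrict}.
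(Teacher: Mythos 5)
Your proposal is correct and mirrors the paper's own proof: both reduce to Proposition \ref{complexstrict} by showing $\mathcal{U}\otimes_{A^\circ}(\coim\partial^j)^\circ$ has bounded $\pi$-torsion, and both establish this via a descending induction that chains the two long exact sequences (from $0\to\im\to\ker\to\mathrm{H}^\bullet\to 0$ and from $0\to\ker\to C^\bullet\to\coim\to 0$) in exactly the order you describe, invoking Lemmas \ref{sesstrictiff}, \ref{sequenceoftorsion}, \ref{equivalenttorsion}, and \ref{isoequivtorsion}. The only difference is a cosmetic shift in indexing (you pass from $j$ to $j-1$; the paper from $j+1$ to $j$), and your flagged concerns — the $s=0$ terms not being automatically $\pi$-torsion, and the normalizations needed so the unit-ball sequences are genuinely short exact — are exactly what the paper's setup and side remarks address.
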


\begin{proof}
Consider again the strict short exact sequence
\begin{equation*}
0\to \ker\partial^j\to C^j\to \coim\partial^j\to 0.
\end{equation*}
In the light of Proposition \ref{complexstrict} and Lemma \ref{sesstrictiff}, it is now enough to show that $\mathcal{U}\otimes (\coim\partial^j)^\circ$ has bounded $\pi$-torsion for each $j$. \\
We will in fact show the following stronger statement: for each $j$ and each $s\geq 0$, $T_s((\coim\partial^j)^o)$ and $T_s((\ker\partial^j)^\circ)$ have bounded $\pi$-torsion.\\
We will argue inductively on $j$. The statement is true for sufficiently large $j$ (and arbitrary $s$) by assumption. Let us now assume we have proved the statement for $T_s((\coim\partial^{j+1})^\circ)$ and $T_s((\ker\partial^{j+1})^\circ)$ for each $s$.\\
\\
Consider the following long exact sequence
\begin{equation*}
\dots \to T_{s+1}((\mathrm{H}^{j+1}(C^{\bullet}))^\circ)\to T_{s}((\im\partial^j)^\circ)\to T_{s}((\ker\partial^{j+1})^\circ)\to \dots,
\end{equation*}
obtained from the natural short exact sequence. \\
As noted earlier, $T_{s+1}((\mathrm{H}^{j+1}(C^{\bullet}))^\circ)$ is a $\pi$-torsion module, since $U$ is flat over $A$, and it has bounded $\pi$-torsion by assumption.\\
Thus, $T_{s+1}((\mathrm{H}^{j+1}(C^\bullet))^\circ)\in \mathcal{BT}$ is annihilated by some positive power of $\pi$.\\
By inductive hypothesis, $T_{s}((\ker\partial^{j+1})^\circ)$ has bounded $\pi$-torsion, so Lemma \ref{sequenceoftorsion} now implies that $T_{s}((\im\partial^j)^\circ)$ has bounded $\pi$-torsion for each $s\geq 0$.\\
\\
But now $(\im \partial^j)^\circ$ and $(\coim \partial^j)^\circ$ are unit balls of equivalent norms on $\im \partial^j$, so by Lemma \ref{equivalenttorsion}, $T_s((\coim \partial^j)^\circ)$ is isomorphic to $T_s((\im \partial^j)^\circ)$ in $\mathcal{Q}$, and has therefore bounded $\pi$-torsion for each $s\geq 0$ by Lemma \ref{isoequivtorsion}. \\
In particular, $T_s((\coim \partial^j)^\circ)\in \mathcal{BT}$ for $s\geq 1$ by flatness of $U$.\\
Now applying Lemma \ref{sequenceoftorsion} to the part of the long exact sequence 
\begin{equation*}
\dots \to T_{s+1}((\coim\partial^j)^\circ)\to T_s((\ker\partial^j)^\circ)\to T_s((C^j)^\circ)\to \dots
\end{equation*}
shows that $T_s((\ker\partial^j)^\circ)$ has bounded $\pi$-torsion for all $s\geq 0$.
\end{proof}
This proves in particular Proposition \ref{Mainresultcompltensor}.(ii).

\section{Enveloping algebras and Fr\'echet--Stein algebras}
For the entirety of this section, $A$ will denote an affinoid $K$-algebra with affine formal model $\mathcal{A}$.\\
\\
We briefly recall the notion of a Lie--Rinehart algebra and its enveloping algebra, as defined in \cite{Rinehart}.
Let $S$ be a commutative base ring and $B$ a commutative $S$-algebra. Then a $B$-module $L$ is a \textbf{$(S, B)$-Lie algebra} if $L$ is also an $S$-Lie algebra equipped with a $B$-linear Lie algebra homomorphism
\begin{equation*}
\rho: L\to \Der_S(B),
\end{equation*}
called the anchor map, satisfying
\begin{equation*}
[x, by]=b[x, y]+\rho(x)(b)y
\end{equation*}
for $x, y\in L$, $b\in B$, i.e. the Lie bracket respects the $B$-action via a Leibniz rule.\\
\\
A standard example will be the $(K, A)$-Lie algebra $L=\Der_K(A)$, which is isomorphic to the global sections of the tangent sheaf $\mathcal{T}_X(X)$ for $X=\Sp A$.\\
\\
Given an $(S, B)$-Lie algebra $L$, Rinehart defined in \cite{Rinehart} the enveloping algebra $U_B(L)$, which comes equipped with two canonical injections
\begin{equation*}
i_B: B\to U_B(L), \ i_L: L\to U_B(L)
\end{equation*} 
and satisfies the following universal property.
\begin{proposition}[{see \cite[Lemma 2.1.2]{Ardakovequiv}}]
\label{univenvelope}
Let $T$ be an associative $S$-algebra together with an $S$-algebra morphism $j_B: B\to T$ and an $S$-Lie algebra morphism $j_L: L\to T$, satisfying
\begin{equation*}
j_L(bx)=j_B(b)j_L(x) \ \forall b\in B, \ x\in L
\end{equation*}
and
\begin{equation*}
[j_L(x), j_B(b)]=j_B(\rho(x)(b)) \ \forall b \in B, \ x\in L.
\end{equation*}
Then there exists a unique $S$-algebra morphism $\phi: U_B(L)\to T$ such that $j_B=\phi\circ i_B$ and $j_L=\phi \circ i_L$.
\end{proposition}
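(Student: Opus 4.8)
The statement to prove is the universal property of the Rinehart enveloping algebra $U_B(L)$ (Proposition \ref{univenvelope}). The natural strategy is to recall Rinehart's explicit construction of $U_B(L)$ as a quotient of a larger algebra, and to exhibit the desired morphism $\phi$ by checking that the defining relations are respected by the pair $(j_B, j_L)$. Concretely, one forms the semidirect-product-type $S$-Lie algebra $B \rtimes L$ (with underlying $S$-module $B \oplus L$ and bracket combining the $L$-bracket, the anchor action, and the trivial bracket on $B$), passes to its universal enveloping algebra $U_S(B \rtimes L)$ in the classical Lie-algebraic sense, and then defines $U_B(L)$ as the quotient by the two-sided ideal generated by the elements $1_{U_S(B\rtimes L)} - 1_B$, $b \cdot b' - bb'$ (products in $U_S$ versus products in $B$), and $b \cdot x - bx$ for $b, b' \in B$, $x \in L$ (product in $U_S$ versus the $B$-action on $L$). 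The canonical maps $i_B, i_L$ are the composites of the inclusions $B, L \hookrightarrow B \rtimes L \hookrightarrow U_S(B \rtimes L)$ with the quotient.

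**Key steps in order.** First I would recall this construction precisely, fixing notation for the ideal $I$ so that $U_B(L) = U_S(B \rtimes L)/I$. Second, given the data $(T, j_B, j_L)$ as in the statement, I would observe that $j_B$ and $j_L$ together define an $S$-Lie algebra homomorphism $B \rtimes L \to T$: the hypothesis $[j_L(x), j_B(b)] = j_B(\rho(x)(b))$ is exactly the statement that this map respects the mixed bracket in $B \rtimes L$, while $j_B$ being an algebra map (hence Lie map for the commutator bracket, which is trivial on $B$ since $B$ is commutative) and $j_L$ being a Lie map handle the remaining two bracket components. Third, by the universal property of $U_S(-)$ for $S$-Lie algebras, this extends uniquely to an $S$-algebra homomorphism $\widetilde\phi: U_S(B \rtimes L) \to T$. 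Fourth, I would check $\widetilde\phi$ kills the generators of $I$: $\widetilde\phi$ sends $1$ to $1$ and sends the product $b \cdot b'$ to $j_B(b)j_B(b') = j_B(bb')$ since $j_B$ is an algebra map, and sends $b \cdot x$ to $j_B(b)j_L(x) = j_L(bx)$ by the first compatibility hypothesis; hence $\widetilde\phi$ factors through a unique $S$-algebra map $\phi: U_B(L) \to T$. Fifth, $j_B = \phi \circ i_B$ and $j_L = \phi \circ i_L$ follow by chasing the definitions. Finally, uniqueness of $\phi$ follows because $U_B(L)$ is generated as an $S$-algebra by $i_B(B)$ and $i_L(L)$ (a consequence of the PBW-type filtration on $U_B(L)$, or directly from the construction), so any algebra map out of it is determined by its restrictions to $B$ and $L$.

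**Main obstacle.** The genuinely delicate point is not the existence of $\phi$ — that is a formal consequence of universal properties once the relations are seen to be respected — but rather making sure the bookkeeping of the two multiplications (the one on $B$ as an algebra versus the one induced inside $U_S(B \rtimes L)$) and the $B$-module versus $S$-module structures on $L$ is done cleanly; this is precisely where the hypotheses $j_L(bx) = j_B(b)j_L(x)$ and the anchor-compatibility are consumed. A secondary subtlety is the uniqueness claim, which relies on knowing that $i_B(B)$ and $i_L(L)$ generate $U_B(L)$; since the excerpt cites \cite{Ardakovequiv} for the statement, I would simply invoke that reference (or Rinehart's original PBW theorem) for this generation fact rather than reprove it. In short, the proof is a matter of assembling standard universal properties, and the write-up's main burden is notational care rather than any conceptual difficulty.
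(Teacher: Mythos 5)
The paper does not prove this statement; it is quoted from \cite[Lemma 2.1.2]{Ardakovequiv}, which in turn rests on Rinehart's construction of $U_B(L)$. Your argument supplies exactly the proof being cited: you realize $U_B(L)$ as a quotient of $U_S(B\rtimes L)$, note that the two compatibility hypotheses on $(j_B, j_L)$ are precisely what makes $(b,x)\mapsto j_B(b)+j_L(x)$ an $S$-Lie algebra map into $T$ (with the commutator bracket), and check that the induced algebra map $\widetilde\phi\colon U_S(B\rtimes L)\to T$ annihilates the defining relations, so that it factors through $U_B(L)$. This is the standard route and your verification is correct. A minor remark on bookkeeping: Rinehart's own construction works inside the non-unital subalgebra of $U_S(B\oplus L)$ generated by $B\oplus L$ and imposes only the two relations $b\cdot b'-bb'$ and $b\cdot x-bx$; the image of $1_B$ then becomes a unit automatically, so the extra generator $1_{U_S}-1_B$ in your ideal $I$ is not needed if one follows him literally, though your unital variant is equivalent and arguably cleaner for the verification. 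Your appeal to the generation of $U_B(L)$ by $i_B(B)$ and $i_L(L)$ for uniqueness is also correct and follows immediately from the construction (no PBW-type input is required for that part), so the proof is complete.
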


Note that $U_B(L)$ comes equipped with a natural degree filtration, setting $F_0=B$, $F_1=B+L$, $F_i=F_1\cdot F_{i-1}$ for $i\geq 2$.\\ 
The following analogue of the Poincar\'e--Birkhoff--Witt Theorem is also due to Rinehart.
\begin{theorem}[{\cite[Theorem 3.1]{Rinehart}}]
\label{Rinehart}
Let $L$ be an $(S, B)$-Lie algebra which is finitely generated as a $B$-module. Then the morphism
\begin{equation*}
\Sym_BL\to \gr U_B(L)
\end{equation*}
is surjective, and is an isomorphism if $L$ is projective.
\end{theorem}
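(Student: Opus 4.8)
\emph{Strategy.} The two assertions are proved separately: surjectivity is formal, and the isomorphism statement for projective $L$ is reduced to the classical free case by localization. For surjectivity, note that the defining relations of $U_B(L)$, read off from the universal property, give $i_L(x)i_L(y)-i_L(y)i_L(x)=i_L([x,y])\in F_1$ for $x,y\in L$ and $i_L(x)i_B(b)-i_B(b)i_L(x)=i_B(\rho(x)(b))\in F_0$ for $b\in B$. Hence the commutator strictly lowers the degree filtration, so $\gr U_B(L)$ is a commutative ring with central $\gr_0=B$, and since $F_i=F_1\cdot F_{i-1}$ it is generated as a $B$-algebra by $\gr_1=F_1/F_0$, which is a quotient of $L$. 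This yields the canonical surjective graded $B$-algebra homomorphism $\Sym_BL\to \gr U_B(L)$; finite generation of $L$ plays no role here.

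\emph{Reduction to the free case.} Assume now $L$ is finitely generated projective. Injectivity of $\Sym_BL\to\gr U_B(L)$ may be tested after localizing $B$ at an arbitrary prime, since localization is exact, so it suffices to check that the constructions involved commute with localization. For a multiplicative set $T\subseteq B$, the module $T^{-1}L$ is again an $(S,T^{-1}B)$-Lie algebra, its bracket and anchor being extended in the unique way compatible with the Leibniz rule (every $S$-derivation of $B$ extends uniquely to $T^{-1}B$); the universal property of Proposition \ref{univenvelope} then identifies $T^{-1}U_B(L)$ with $U_{T^{-1}B}(T^{-1}L)$, this identification carries $F_\bullet$ to $T^{-1}F_\bullet$ (so $\gr$ commutes with localization as well), and $\Sym$ commutes with base change. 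Since a finitely generated projective module becomes free after localizing at a prime, we are reduced to the case where $L$ is free.

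\emph{The free case.} Let $\{e_i\}_{i\in I}$ be a basis of the free $B$-module $L$ with $I$ totally ordered, and put $P=\Sym_BL$. Following Rinehart, I would construct an action of the pair $(B,L)$ on $P$: let $B$ act by multiplication, and define $S$-linear operators $\theta(x)$ on $P$ for $x\in L$ by induction on the degree of ordered monomials, so that $\theta(e_i)$ left-multiplies by $e_i$ whenever the result stays ordered, otherwise transposes $e_i$ past a smaller generator at the cost of a bracket term, and acts on the $B$-coefficients through $\rho(e_i)$; in particular $\theta(x)$ raises degree by at most one and agrees with multiplication by $x$ in top degree. The recipe is arranged so that the $\theta(x)$ satisfy $\theta(bx)=b\,\theta(x)$, $[\theta(x),b]=\rho(x)(b)$ and $[\theta(x),\theta(y)]=\theta([x,y])$ as endomorphisms of $P$, whence Proposition \ref{univenvelope} assembles them into an $S$-algebra homomorphism $U_B(L)\to\operatorname{End}_S(P)$. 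Evaluating at $1\in P$ gives a $B$-linear map $\sigma\colon U_B(L)\to P$ with $\sigma(F_i)$ contained in the polynomials of degree $\leq i$; since $\sigma(i_L(e_{i_1})\cdots i_L(e_{i_d}))$ has top-degree component $e_{i_1}\cdots e_{i_d}$, the associated graded of $\sigma$ is a left inverse to $\Sym_BL\to\gr U_B(L)$, which together with surjectivity makes the latter an isomorphism.

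\emph{Main obstacle.} The technical heart is entirely in the free case: one must verify that the operators $\theta(x)$ are well defined by the inductive prescription and satisfy the three identities above. Because the Lie–Rinehart Leibniz rule forces $\theta(x)$ to be a $\rho(x)$-derivation rather than a $B$-linear map, the induction must carry the derivation terms through every straightening step, and the relation $[\theta(x),\theta(y)]=\theta([x,y])$ relies simultaneously on the Jacobi identity in $L$, the module Leibniz rule, and the fact that $\rho$ is a homomorphism of Lie algebras. This is precisely Rinehart's original computation in \cite{Rinehart}, which one could instead invoke verbatim.
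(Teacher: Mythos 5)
The paper does not prove this theorem; it is cited directly from Rinehart, so there is no in-text argument to compare your write-up against. Your outline is correct and in broad strokes reproduces Rinehart's own strategy: surjectivity is formal once one notes that the commutation relations drop filtration degree, so $\gr U_B(L)$ is commutative, has $B$ in degree zero, and is generated in degree one by a quotient of $L$; for injectivity one reduces to the free case by localizing at a prime of $B$, and the free case is the Poincar\'e--Birkhoff--Witt straightening argument producing a compatible $(B,L)$-action on $\Sym_B L$. One remark on the localization step: you invoke an identification $T^{-1}U_B(L)\cong U_{T^{-1}B}(T^{-1}L)$, which tacitly presupposes that $T$ is an Ore set in the noncommutative ring $U_B(L)$ (true, via the filtration, but a point that should be justified rather than passed over). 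In fact you can sidestep Ore localization entirely: the universal property of Proposition \ref{univenvelope} already gives a filtered ring homomorphism $U_B(L)\to U_{B_{\mathfrak p}}(L_{\mathfrak p})$, hence a $B$-linear map $\gr U_B(L)\to \gr U_{B_{\mathfrak p}}(L_{\mathfrak p})$ factoring through $(\gr U_B(L))_{\mathfrak p}$, and the composite
\begin{equation*}
\Sym_{B_{\mathfrak p}}L_{\mathfrak p}\longrightarrow (\gr U_B(L))_{\mathfrak p}\longrightarrow \gr U_{B_{\mathfrak p}}(L_{\mathfrak p})
\end{equation*}
is the canonical map, an isomorphism by the free case since $L_{\mathfrak p}$ is free over the local ring $B_{\mathfrak p}$. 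Injectivity of the first arrow at every prime then yields injectivity of $\Sym_B L\to \gr U_B(L)$, with no need to know $\gr$ commutes with localization. As you honestly note, the genuine content sits in the free case, and that is precisely Rinehart's computation.
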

In particular, if $B$ is Noetherian and $L$ is finitely generated, then $U_B(L)$ is a Noetherian $K$-algebra.
\begin{lemma}
\label{envisflat}
Let $L$ be an $(S, B)$-Lie algebra which is a finitely generated projective $B$-module. Then $U_B(L)$ is a flat left $B$-module.
\end{lemma}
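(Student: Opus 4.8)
The plan is to use the PBW-type filtration from Theorem~\ref{Rinehart} to reduce the flatness of $U_B(L)$ as a $B$-module to the flatness of the associated graded, and then to exploit that the associated graded is $\Sym_B L$, a symmetric algebra on a finitely generated projective $B$-module. First I would recall that $U_B(L)$ carries the exhaustive, $\mathbb{N}$-indexed degree filtration $F_0 = B \subseteq F_1 = B + L \subseteq \cdots$ with $F_i = F_1 \cdot F_{i-1}$, and that each $F_i$ is a $B$-submodule (using $i_B(B)$ as the left action). Since $L$ is projective over $B$, Theorem~\ref{Rinehart} gives $\gr U_B(L) \cong \Sym_B L$ as $B$-algebras, hence in particular as graded $B$-modules.

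Next I would observe that $\Sym_B L = \bigoplus_{n \geq 0} \Sym_B^n L$ is a flat $B$-module: each symmetric power $\Sym_B^n L$ of a finitely generated projective module is again finitely generated projective (a direct summand of $L^{\otimes n}$, compatibly with base change, since being projective is local and $L$ is locally free of finite rank), hence flat, and a direct sum of flat modules is flat. Therefore every graded piece $\gr_i U_B(L) = F_i / F_{i-1} \cong \Sym_B^i L$ is flat over $B$.

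Then I would run the standard induction: $F_0 = B$ is flat, and the short exact sequence of $B$-modules
\begin{equation*}
0 \to F_{i-1} \to F_i \to \gr_i U_B(L) \to 0
\end{equation*}
has flat outer terms in the inductive step ($F_{i-1}$ flat by induction, $\gr_i U_B(L) \cong \Sym_B^i L$ flat by the previous paragraph), so $F_i$ is flat — concretely, a module is flat iff $\Tor_1^B(-, N) = 0$ for all $N$, and the long exact sequence shows $\Tor_1^B(F_i, N)$ is squeezed between $\Tor_1^B(\gr_i U_B(L), N) = 0$ and $\Tor_1^B(F_{i-1}, N) = 0$ (using $\Tor_2^B(\gr_i U_B(L), N) = 0$). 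Hence each $F_i$ is flat over $B$. Finally, $U_B(L) = \bigcup_i F_i = \varinjlim_i F_i$ is a filtered colimit of flat $B$-modules, hence flat over $B$.

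The only genuinely substantive point — and the step I would be most careful about — is the claim that $\Sym_B^n L$ is flat (equivalently finitely generated projective) over $B$ when $L$ is finitely generated projective; this is where projectivity of $L$, rather than mere finite generation, is essential, and it is precisely the hypothesis under which Theorem~\ref{Rinehart} identifies $\gr U_B(L)$ with the full symmetric algebra. Everything else is the routine filtered-colimit/five-term-exact-sequence bookkeeping. One should also note at the outset that the degree filtration is by $B$-submodules only because we are asserting flatness on the \emph{left}, matching the left $B$-module structure via $i_B$; the right-module version would run identically with the opposite convention.
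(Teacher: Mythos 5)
Your argument matches the paper's essentially verbatim: use Theorem \ref{Rinehart} to identify $\gr U_B(L)$ with $\Sym_B L$, show each graded piece $\Sym_B^n L$ is finitely generated projective (hence flat), run the short-exact-sequence induction on the degree filtration $F_n U_B(L)$, and pass to the filtered colimit. The only difference is the justification for projectivity of $\Sym_B^n L$: the paper cites Bourbaki, whereas you argue directly; note, though, that $\Sym_B^n L$ is \emph{not} generally a direct summand of $L^{\otimes n}$ unless $n!$ is invertible in $B$ (which is not guaranteed here, since the lemma is later applied with $B$ an affine formal model over $R$, whose residue characteristic is $p$) --- the correct elementary argument is that if $L\oplus L'\cong F$ with $F$ finite free then $\Sym_B^n F\cong \bigoplus_{a+b=n}\Sym_B^a L\otimes_B \Sym_B^b L'$ exhibits $\Sym_B^n L$ as a direct summand of a finite free module, and your alternative appeal to local freeness of $L$ also suffices.
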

\begin{proof}
By \cite[III. 6.6, Corollary to Theorem 1]{Bourbakialg}, $\Sym_B L$ is a projective $B$-module, so each graded piece $(\Sym_B L)_n$ is projective and hence flat. The short exact sequence
\begin{equation*}
0\to F_{n-1} U_B(L)\to F_nU_B(L)\to (\Sym_B L)_n\to 0
\end{equation*} 
then ensures inductively that $F_nU_B(L)$ is a flat $B$-module for each $n$, and since tensor products commute with direct limits, $U_B(L)$ is also flat.
\end{proof}

In analogy to the procedure of analytification, we will be studying the following structure.\\
Given a $(K, A)$-Lie algebra $L$ which is finitely generated as an $A$-module, a \textbf{lattice} $\mathcal{L}$ is defined to be a finitely generated $\mathcal{A}$-submodule of $L$ such that $\mathcal{L}\otimes_R K=L$. We call $\mathcal{L}$ an \textbf{$(R, \mathcal{A})$-Lie lattice} if moreover $\mathcal{L}$ is closed under the Lie bracket and the $\mathcal{L}$-action on $A$ induced by $\rho$ preserves $\mathcal{A}$ (in particular, $\mathcal{L}$ is an $(R, \mathcal{A})$-Lie algebra). In this case $\pi^n\mathcal{L}$ is an $(R, \mathcal{A})$-Lie lattice for any non-negative integer $n$, and we can form the \textbf{Fr\'echet completion}
\begin{equation*}
\wideparen{U(L)}=\varprojlim \widehat{U_{\mathcal{A}}(\pi^n\mathcal{L})}_K.
\end{equation*}
It turns out that the Fr\'echet completion is independent of the choice of formal model $\mathcal{A}$ and Lie lattice $\mathcal{L}$, as shown in \cite[section 6.2]{Ardakov1}.\\
\\
We look at an easy example.\\
Any finite-dimensional $K$-Lie algebra $\mathfrak{g}$ is a $(K, K)$-Lie algebra with $\rho$ being the zero map. Choosing an ordered $K$-basis $x_1, \dots, x_m$ such that the $R$-span of the $x_i$ is closed under the Lie bracket, we get
\begin{equation*}
\wideparen{U(\mathfrak{g})}=\{ \sum_{i\in \mathbb{N}^m} a_i x^i: \ a_i\in K, \ |a_i| |\pi|^{-|i|n}\to 0 \ \text{as}\ |i|\to \infty \ \forall n\geq 0\},
\end{equation*}
the Arens--Michael envelope of $\mathfrak{g}$. This algebra, which is closely related to the representation theory of the associated $p$-adic Lie group, was already studied in \cite{Schmidtflat}, \cite{SchmidtVerma}. \\
We can think of this as a non-commutative version of analytic functions on $\mathfrak{g}^*$.
\begin{definition}
\label{defineFS}
A $K$-algebra $U$ is called a (left, two-sided) \textbf{Fr\'echet--Stein algebra} if $U=\varprojlim U_n$ is an inverse limit of countably many (left, two-sided) Noetherian Banach $K$-algebras $U_n$, such that for every $n$ the following is satisfied:
\begin{enumerate}[(i)]
\item The morphism $U_{n+1}\to U_n$ makes $U_n$ a flat $U_{n+1}$-module (on the right, on both sides).
\item The morphism $U_{n+1}\to U_n$ has dense image.
\end{enumerate}
\end{definition}
From now on, we will understand `Fr\'echet--Stein' to mean `two-sided Fr\'echet--Stein' throughout.\\
It is not difficult to see that $\wideparen{U(\mathfrak{g})}$ is in fact a Fr\'echet--Stein algebra, and that the same holds in general with $\mathfrak{g}$ being replaced by any $(K, A)$-Lie algebra which is a free finitely generated $A$-module. The only non-trivial ingredient for this is Rinehart's theorem as given in Theorem \ref{Rinehart}, which allows for explicit calculations.\\
\\
In \cite{Ardakov1}, this was generalized to the proposition that $\wideparen{U_A(L)}$ is Fr\'echet--Stein if $L$ is finitely generated projective over $A$ and contains an $(R, \mathcal{A})$-Lie lattice $\mathcal{L}$ that is projective over $\mathcal{A}$ -- again, the assumption of a projective Lie lattice is used to allow for explicit calculations in $U_{\mathcal{A}}(\pi^n\mathcal{L})$ by Theorem \ref{Rinehart}.\\
We now present a proof which doesn't require any assumptions on the Lie lattice.
\begin{theorem}
\label{envisFS}
Let $L$ be a $(K, A)$-Lie algebra which is a finitely generated projective $A$-module, and let $\mathcal{L}$ be any $(R, \mathcal{A})$-Lie lattice in $L$. Then
\begin{equation*}
\wideparen{U(L)}=\varprojlim \widehat{U_{\mathcal{A}}(\pi^n\mathcal{L})}_K
\end{equation*}
is a Fr\'echet--Stein algebra.
\end{theorem}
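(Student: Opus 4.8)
The plan is to verify the three conditions in Definition \ref{defineFS} for the tower $U_n = \widehat{U_{\mathcal{A}}(\pi^n\mathcal{L})}_K$, where the Noetherianity and dense image conditions are essentially formal, so that the real content lies in establishing the flatness condition (i). First I would record that each $U_n$ is a Noetherian Banach $K$-algebra: by Theorem \ref{Rinehart}, $\gr U_{\mathcal{A}}(\pi^n\mathcal{L})$ is a quotient of $\Sym_{\mathcal{A}}(\pi^n\mathcal{L})$, hence Noetherian (as $\mathcal{A}$ is Noetherian and $\pi^n\mathcal{L}$ is finitely generated), so $U_{\mathcal{A}}(\pi^n\mathcal{L})$ is Noetherian; passing to the $\pi$-adic completion and inverting $\pi$ preserves this (the $\pi$-adic completion of a Noetherian ring whose $\gr$ is Noetherian is Noetherian, and localization preserves Noetherianity). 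The dense image condition (ii) is immediate, since $U_{\mathcal{A}}(\pi^{n+1}\mathcal{L}) \subseteq U_{\mathcal{A}}(\pi^n\mathcal{L})$ has dense image in $U_n$ by construction, and $U_{n+1}$ contains this. One also needs that $\wideparen{U(L)} = \varprojlim U_n$ genuinely realises $L$ via the transition maps being the right ones, which is part of the set-up recalled from \cite[section 6.2]{Ardakov1}.

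The heart of the matter is showing that $U_n$ is flat over $U_{n+1}$ on both sides. This is where I expect to invoke Emerton's flatness criterion (alluded to in the introduction as the key tool of this section): it suffices to check flatness after passing to a suitable associated graded or to check it "coefficientwise". Concretely, I would write $U_{n+1} \to U_n$ as obtained by completing the inclusion $U_{\mathcal{A}}(\pi^{n+1}\mathcal{L}) \hookrightarrow U_{\mathcal{A}}(\pi^n\mathcal{L})$ $\pi$-adically and inverting $\pi$. The plan is to show the map $\widehat{U_{\mathcal{A}}(\pi^{n+1}\mathcal{L})} \to \widehat{U_{\mathcal{A}}(\pi^n\mathcal{L})}$ is flat after inverting $\pi$ by analysing the associated graded with respect to the $\pi$-adic filtration together with the degree filtration: the "doubly graded" object is governed by $\Sym$ of the Lie lattices modulo $\pi$, where one has genuine commutative algebra and can check flatness directly using that $\pi^n\mathcal{L}/\pi^{n+1}\mathcal{L}$ etc. behave well. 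Emerton's criterion (a complete filtered ring map is flat — or c-flat, flat after inverting $\pi$ — once the graded map is flat, under mild finiteness/completeness hypotheses) then upgrades this to flatness of $U_{n+1} \to U_n$.

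The main obstacle, and the reason the result is nontrivial without a smooth Lie lattice, is precisely that $\Sym_{\mathcal{A}}(\pi^n\mathcal{L})$ need no longer be a polynomial ring over $\mathcal{A}$ nor even $\pi$-torsionfree as we vary $n$: the graded pieces $(\Sym_{\mathcal{A}}\mathcal{L})_d$ can have $\pi$-torsion, and $\gr U_{\mathcal{A}}(\pi^n\mathcal{L})$ is only a \emph{quotient} of $\Sym$, not equal to it. So the commutative-algebra input that made the argument in \cite{Ardakov1} transparent is unavailable. The way around this is exactly the philosophy of section 2: one does not need $\mathcal{U} = U_{\mathcal{A}}(\pi^{n+1}\mathcal{L})$ to be flat over $\mathcal{A}$ (or the relevant graded ring), only that the obstruction Tor-groups have \emph{bounded $\pi$-torsion}, which becomes harmless after inverting $\pi$. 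Thus I would: (a) bound the $\pi$-torsion in $\gr$ of the enveloping algebras uniformly — using that $L$ and $\mathcal{L}$ are finitely generated so that, by Theorem \ref{Rinehart}, the kernel of $\Sym_{\mathcal{A}}(\pi^n\mathcal{L}) \to \gr U_{\mathcal{A}}(\pi^n\mathcal{L})$ and the $\pi$-torsion of $\Sym$ are controlled; (b) deduce that the relevant Tor-groups of the transition maps have bounded $\pi$-torsion; (c) apply Emerton's criterion together with the completed-tensor-product machinery (Theorem \ref{fullcomplexstrict} / Corollary \ref{flatunitballcomplex} in the guise of Proposition \ref{Mainresultcompltensor}) to conclude c-flatness, i.e. flatness of $U_{n+1} \to U_n$ after inverting $\pi$. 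The two-sided statement follows by running the same argument for right modules, which is legitimate since all the section-2 results hold mutatis mutandis on both sides.
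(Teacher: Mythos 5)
Your high-level reading is right: Noetherianity and denseness are routine, the content is flatness of $U_{n+1}\to U_n$, and Emerton's criterion is the tool. But the mechanism you propose for applying it is not what works, and you omit a necessary preliminary step.

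First, the omission. Before Emerton's criterion can be invoked, one needs the subalgebras $U_n\subseteq U_A(L)$ (image of $U_{\mathcal{A}}(\pi^n\mathcal{L})$) to be $\pi$-adically separated, so that they are genuine unit balls of norms. This is not automatic when $\mathcal{L}$ is not projective, and the paper spends two lemmas (Lemma~\ref{freesep} and Lemma~\ref{generalsep}) establishing it for $n$ large, by reducing to the free case via a covering of $\Sp A$ trivializing $L$. Your proposal never addresses this.

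Second, the mechanism. You suggest studying a "doubly graded" object (combining the $\pi$-adic and degree filtrations), reducing to commutative algebra modulo $\pi$, and then feeding bounded $\pi$-torsion estimates on Tor groups into the section-2 machinery (Theorem~\ref{fullcomplexstrict}, Proposition~\ref{Mainresultcompltensor}). This is not what the paper does, and as sketched it doesn't connect cleanly to Emerton's hypotheses: \cite[Proposition 5.3.10]{Emerton} is a statement about a ring $U$ equipped with a ring filtration whose degree-zero piece is the distinguished subring and whose associated graded is generated over it by finitely many \emph{central} elements; it is not a criterion about Tor groups having bounded $\pi$-torsion, and the Tor-machinery of section 2 is purpose-built for exactness of $U\widehat{\otimes}_A(-)$ along complexes, not for flatness of a Noetherian completion over a subalgebra. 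The paper's actual device (borrowed from \cite[Lemma 3]{Ardakovnonsplit}) is to replace the naive degree filtration $F_iU$ (with $F_0U=\mathcal{A}$) by the filtration $F_i'U = U_1\cdot F_iU$, so that $F_0'U=U_1$; the relation $[\mathcal{L},\pi\mathcal{L}]\subseteq\pi\mathcal{L}$ then gives $[\mathcal{L},U_1]\subseteq U_1$, which is exactly what makes $F'$ multiplicative and the images of the $\overline{\partial}_j$ central in $\gr'U$. That filtration, not a double-graded or Tor-based reduction, is the key idea you are missing, and without it the plan does not yield a proof.
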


Before we turn to the proof, we need to establish some lemmas. Throughout, $U_n$ will denote the image of $U(\pi^n\mathcal{L})$ in $U_A(L)$, i.e. the $\mathcal{A}$-subalgebra generated by $\mathcal{A}$ and $\pi^n\mathcal{L}$.\\
Note that by \cite[Lemma 2.5]{Ardakov1}, we have $\widehat{U(\pi^n\mathcal{L})}_K\cong \widehat{U_n}_K$, and we can therefore think of $\widehat{U(\pi^n\mathcal{L})}_K$ as the completion of $U_A(L)$ with respect to the semi-norm with unit ball $U_n$. Once $U_n$ is $\pi$-adically separated, this semi-norm is in fact a norm. 

\begin{lemma}
\label{freesep}
If $L$ is a free $A$-module, then $U_n$ is $\pi$-adically separated for sufficiently large $n$.
\end{lemma}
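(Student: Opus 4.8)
The plan is to sandwich $U_n$, for $n$ large, inside $U_{\mathcal{A}}(\mathcal{L}_1)$ for a suitable \emph{free} $(R,\mathcal{A})$-Lie lattice $\mathcal{L}_1\subseteq L$, and then invoke Rinehart's theorem: since $\mathcal{L}_1$ is free, hence projective, over $\mathcal{A}$, Theorem \ref{Rinehart} gives $\gr U_{\mathcal{A}}(\mathcal{L}_1)\cong \Sym_{\mathcal{A}}\mathcal{L}_1$, a polynomial ring over $\mathcal{A}$, so $U_{\mathcal{A}}(\mathcal{L}_1)$ is a free $\mathcal{A}$-module (ordered PBW monomials). As $\mathcal{A}$ is $\pi$-adically separated, so is every free $\mathcal{A}$-module, and hence so is any $\mathcal{A}$-submodule; this will force $U_n$ to be $\pi$-adically separated.

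Concretely, I would fix an $A$-basis $x_1,\dots,x_d$ of $L$; after rescaling each $x_i$ by a power of $\pi$ we may assume $\mathcal{M}_0:=\bigoplus_i \mathcal{A}x_i\subseteq \mathcal{L}$, and since $\mathcal{L}$ is a finitely generated $\mathcal{A}$-submodule of $L=\bigoplus_i Ax_i$ there is an integer $c\geq 0$ with $\mathcal{L}\subseteq \pi^{-c}\mathcal{M}_0$. Set $\mathcal{L}_1:=\pi^{c}\mathcal{M}_0$. The only genuine verification is that $\mathcal{L}_1$ is an $(R,\mathcal{A})$-Lie lattice: it is evidently finitely generated and free over $\mathcal{A}$ with $\mathcal{L}_1\otimes_R K=L$; because the bracket on $L$ is $K$-bilinear we have $[\mathcal{L}_1,\mathcal{L}_1]=\pi^{2c}[\mathcal{M}_0,\mathcal{M}_0]\subseteq \pi^{2c}[\mathcal{L},\mathcal{L}]\subseteq \pi^{2c}\mathcal{L}\subseteq \pi^{c}\mathcal{M}_0=\mathcal{L}_1$; and since $\mathcal{L}_1\subseteq \mathcal{M}_0\subseteq \mathcal{L}$, the anchor of $\mathcal{L}_1$ preserves $\mathcal{A}$. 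Finally, for $n\geq 2c$ one has $\pi^n\mathcal{L}\subseteq \pi^{n-c}\mathcal{M}_0\subseteq \pi^{c}\mathcal{M}_0=\mathcal{L}_1$.

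It remains to realize $U_n$ as a submodule of $U_{\mathcal{A}}(\mathcal{L}_1)$. The inclusions $\mathcal{A}\hookrightarrow A\to U_A(L)$ and $\mathcal{L}_1\hookrightarrow L\to U_A(L)$ satisfy the hypotheses of Proposition \ref{univenvelope}, so they induce an algebra homomorphism $U_{\mathcal{A}}(\mathcal{L}_1)\to U_A(L)$ whose image is the $\mathcal{A}$-subalgebra generated by $\mathcal{A}$ and $\mathcal{L}_1$; as $\pi^n\mathcal{L}\subseteq \mathcal{L}_1$, this image contains $U_n$. This homomorphism is injective: $U_{\mathcal{A}}(\mathcal{L}_1)$ is $R$-torsionfree (being free over $\mathcal{A}$), and it becomes the isomorphism $U_{\mathcal{A}}(\mathcal{L}_1)\otimes_R K\cong U_A(L)$ after inverting $\pi$, since the enveloping algebra construction commutes with the flat base change $R\to K$. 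Hence $U_n$ is an $\mathcal{A}$-submodule of the free, and therefore $\pi$-adically separated, module $U_{\mathcal{A}}(\mathcal{L}_1)$, and is thus $\pi$-adically separated for all $n\geq 2c$.

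The main obstacle is essentially bookkeeping: keeping the two rescalings (the one making $\mathcal{M}_0\subseteq\mathcal{L}$ and the subsequent choice of $c$) consistent, and checking that $\mathcal{L}_1$ genuinely closes under the bracket and the anchor — the one point where one must distinguish $K$-bilinearity of the bracket on $L$ from $\mathcal{A}$-linearity. Once $\mathcal{L}_1$ is in hand, everything reduces to Rinehart's theorem and the elementary fact that free modules over the $\pi$-adically separated ring $\mathcal{A}$ are $\pi$-adically separated.
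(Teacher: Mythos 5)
Your argument is correct and follows the same route as the paper: both reduce to a free Lie lattice sandwiching $\pi^n\mathcal{L}$ for $n$ large, then appeal to Rinehart's PBW theorem to identify the relevant unit ball as a free $\mathcal{A}$-module, which is $\pi$-adically separated because $\mathcal{A}$ is. The paper directly rescales the basis $\partial_i$ so that $\bigoplus \mathcal{A}\partial_i$ is itself a Lie lattice and proves separatedness of $U_0$ first, whereas you first form $\mathcal{M}_0$ and then pass to $\pi^c\mathcal{M}_0$ to close under the bracket — this is only a bookkeeping difference, and your more explicit verification of the bracket and anchor conditions, as well as of the injectivity of $U_{\mathcal{A}}(\mathcal{L}_1)\to U_A(L)$, only makes more transparent what the paper leaves implicit.
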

\begin{proof}
Let $\partial_1, \partial_2, \dots, \partial_m$ be an ordered $A$-basis of $L$, suitably rescaled such that $\oplus \mathcal{A}\partial_i$ is an $(R, \mathcal{A})$-Lie lattice. We are first going to assume that $\mathcal{L}=\oplus \mathcal{A}\partial_i$. \\
In this case it follows immediately that $U_0$ (and hence any $U_n$ for $n\geq 0$) is $\pi$-adically separated. Identifying $U_A(L)$ as a $K$-vector space with the space of ordered polynomial expressions in the $\partial_i$ with coefficients in $A$ by Rinehart's theorem, $U_0$ corresponds to the subset consisting of polynomials with coefficients in $\mathcal{A}$, which is $\pi$-adically separated since $\mathcal{A}$ is.\\
Now let $\mathcal{L}$ be an arbitrary $(R, \mathcal{A})$-Lie lattice. Since $\mathcal{L}$ is finitely generated, there exists some integer $n$ such that
\begin{equation*}
\pi^n\mathcal{L}\subseteq \oplus \mathcal{A}\partial_i,
\end{equation*}
and thus $U_n$ is contained in the $\mathcal{A}$-subalgebra of $U_A(L)$ generated by $\mathcal{A}$ and $\oplus \mathcal{A}\partial_i$. Therefore $U_n$ is $\pi$-adically separated by the first part of the proof.
\end{proof}

In fact, we can go further and drop the freeness condition.
\begin{lemma}
\label{generalsep}
Let $L$ be a $(K, A)$-Lie algebra which is a finitely generated projective $A$-module, and let $\mathcal{L}$ be any $(R, \mathcal{A})$-Lie lattice in $L$. Then $U_n\subseteq U_A(L)$ is $\pi$-adically separated for sufficiently large $n$.
\end{lemma}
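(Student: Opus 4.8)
The plan is to deduce this from the free case, Lemma \ref{freesep}, by passing to a finite affinoid covering of $X=\Sp A$ that trivialises $L$. Since $L$ is a finitely generated projective $A$-module, it is the module of global sections of a locally free coherent sheaf on $X$, so by quasi-compactness there is a finite covering of $X$ by affinoid subdomains $X_i=\Sp A_i$ with each $L_i:=L\otimes_A A_i$ free over $A_i$. For each $i$ I would choose an affine formal model $\mathcal{A}_i$ of $A_i$ containing the image of $\mathcal{A}$, and an $(R,\mathcal{A}_i)$-Lie lattice $\mathcal{L}_i$ in $L_i$ containing the image of $\mathcal{L}$ (possible after rescaling, since $\mathcal{A}$ and $\mathcal{L}$ are finitely generated). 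Lemma \ref{freesep} applied to $(A_i,\mathcal{A}_i,L_i,\mathcal{L}_i)$ then produces integers $m_i$ such that the $\mathcal{A}_i$-subalgebra of $U_{A_i}(L_i)$ generated by $\mathcal{A}_i$ and $\pi^{m}\mathcal{L}_i$ is $\pi$-adically separated for all $m\geq m_i$; set $n_0=\max_i m_i$.

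The inclusions $A\hookrightarrow A_i$ and $L\to L_i$ induce algebra maps $U_A(L)\to U_{A_i}(L_i)\cong U_A(L)\otimes_A A_i$, and because each $F_dU_A(L)$ is a finitely generated $A$-module and the index set is finite, the sheaf axiom for coherent sheaves on the affinoid $X$ shows that $U_A(L)\to\prod_i U_{A_i}(L_i)$ is injective (filter $U_A(L)=\varinjlim_d F_dU_A(L)$, apply the sheaf property degreewise, and note that a finite product commutes with the colimit). Under these maps the subring $U_n$ (generated over $\mathcal{A}$ by $\mathcal{A}$ and $\pi^n\mathcal{L}$) is carried into the subring of $U_{A_i}(L_i)$ generated by the image of $\mathcal{A}$ and by $\pi^n$ times the image of $\mathcal{L}$; once $n\geq n_0$ this lies in the $\mathcal{A}_i$-subalgebra generated by $\mathcal{A}_i$ and $\pi^{m_i}\mathcal{L}_i$, which is $\pi$-adically separated by the previous paragraph. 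Hence, for $n\geq n_0$, any $x\in\bigcap_m\pi^m U_n$ has zero image in every $U_{A_i}(L_i)$, so $x=0$ by injectivity, and $U_n$ is $\pi$-adically separated.

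The main obstacle is the bookkeeping needed to make Lemma \ref{freesep} applicable uniformly on the pieces $X_i$: one must choose the formal models and Lie lattices compatibly, and one must take $n$ large enough to dominate all the thresholds $m_i$ — which is exactly what forces the qualifier ``for sufficiently large $n$'' in the statement. The only external input is the injectivity of $U_A(L)\to\prod_i U_{A_i}(L_i)$, which is just the sheaf property of coherent sheaves on an affinoid space applied to the finite covering $\{X_i\}$. One might instead attempt a direct approach, filtering $U_n$ by the degree filtration of $U_A(L)$ and showing that each $\gr^F_d U_n$ is finitely generated over $\mathcal{A}$ — it would then be $\pi$-adically separated by Krull's intersection theorem, since $\pi$ lies in the Jacobson radical of the $\pi$-adically complete Noetherian ring $\mathcal{A}$, and a short argument exploiting that the $F_d$ are $K$-subspaces would transfer separatedness from the graded pieces to $U_n$; but controlling $F_dU_n=U_n\cap F_dU_A(L)$ integrally over $\mathcal{A}$ is delicate precisely because $\mathcal{L}$ is not assumed $\mathcal{A}$-projective, so Rinehart's theorem (Theorem \ref{Rinehart}) only supplies a surjection $\Sym_{\mathcal{A}}(\pi^n\mathcal{L})\twoheadrightarrow\gr U_{\mathcal{A}}(\pi^n\mathcal{L})$ in that case rather than an isomorphism.
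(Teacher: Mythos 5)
Your proof is correct and follows essentially the same approach as the paper: localize to a finite affinoid covering $(X_i)$ trivializing the locally free sheaf associated to $L$, apply Lemma \ref{freesep} on each piece after rescaling the lattice, and conclude via the injectivity of $U_A(L)\to\bigoplus_i U_{A_i}(A_i\otimes_A L)$. The paper cites \cite[Corollary 8.2.1/5]{BGR} for this injectivity where you spell out the filtration-by-degree argument reducing to coherent modules, but the underlying mechanism is identical.
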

\begin{proof}
Since $L$ is a finitely generated $A$-module, we obtain an associated coherent $\mathcal{O}_X$-module $\mathscr{L}$ on $X=\Sp A$. As $L$ is projective, $\mathscr{L}$ is locally free, so there exists a finite admissible covering $(X_i)$ of $X$ by affinoid subdomains such that $\mathscr{L}|_{X_i}$ is a free $\mathcal{O}_{X_i}$-module: By \cite[Theorem II.5.2/1]{BourbakiCA}, there exist $f_1, \dots, f_n$ in $A$ generating the unit ideal such that $A_{f_i}\otimes_A L$ is free for each $i$, so the rational covering $\left(X(f_1/f_i, \dots, f_n/f_i)\right)_i$ has the desired property.\\
Write $X_i=\Sp A_i$, and let $\mathcal{A}_i$ be an affine formal model of $A_i$ containing $\mathcal{A}$, which exists by \cite[Lemma 3.2.3]{Ardakovequiv}. Now $\mathscr{L}(X_i)=A_i\otimes_A L$ is a $(K, A_i)$-Lie algebra for any $i$ (see \cite[Corollary 2.4]{Ardakov1}). We write $\mathcal{L}_i$ for the image of $\mathcal{A}_i\otimes_{\mathcal{A}} \mathcal{L}$ in $A_i\otimes L$. Replacing $\mathcal{L}$ by $\pi^m\mathcal{L}$ for sufficiently large $m$ such that $\mathcal{L}(\mathcal{A}_i)\subseteq \mathcal{A}_i$ for all $i$ (which can be done as each formal model is topologically of finite type), we can assume that $\mathcal{L}_i$ is a $(R, \mathcal{A}_i)$-Lie lattice inside the free $A_i$-module $A_i\otimes_A L$.\\
In particular, if we denote the image of $U_{\mathcal{A}_i}(\pi^n\mathcal{L}_i)$ inside $U_{A_i}(A_i\otimes L)$ by $V_i^n$, then Lemma \ref{freesep} implies that for each $i$, $V_i^n$ is $\pi$-adically separated for sufficiently large $n$.\\
Restriction to each element of the covering yields the following commutative diagram.
\begin{equation*}
\begin{xy}
\xymatrix{
U_{\mathcal{A}}(\pi^n\mathcal{L})\ar[r]^g \ar[d] &U_A(L)\ar[d]^f\\
\oplus_i U_{\mathcal{A}_i}(\pi^n\mathcal{L}_i)\ar[r]^{\oplus g_i} & \oplus U_{A_i}(A_i\otimes_A L)
}
\end{xy}
\end{equation*}
Note that $U_{A_i}(A_i\otimes_A L)\cong A_i\otimes_A U_A(L)$ by \cite[Proposition 2.3]{Ardakov1}, and that the morphism $f$ is thus injective by \cite[Corollary 8.2.1/5]{BGR}. Therefore we can identify $U_n= \im g$ with the image of $fg$. By commutativity of the diagram, this is contained in $\oplus_i \text{Im}(g_i)=\oplus_i V_i^n$ and hence is $\pi$-adically separated for sufficiently large $n$ by Lemma \ref{freesep}.
\end{proof}

\begin{proof}[Proof of Theorem \ref{envisFS}]
By Rinehart, $\Sym(\pi^n\mathcal{L})\to \gr U(\pi^n\mathcal{L})$ is a surjection, so $\gr U(\pi^n\mathcal{L})$ is Noetherian. Hence $U(\pi^n\mathcal{L})$ is Noetherian by \cite[Corollary D.IV.5]{Oyst}, making $\widehat{U(\pi^n\mathcal{L})}_K$ a Noetherian Banach algebra. The denseness condition is straightforward, as every term $\widehat{U(\pi^n\mathcal{L})}_K$ contains $U_A(L)$ as a dense subspace. It remains to show flatness of the connecting maps.\\
\\
As before, let $U_n$ denote the image of $U(\pi^n\mathcal{L})$ in $U_A(L)$, and write $U_0=U$. Replacing $\mathcal{L}$ by $\pi^m\mathcal{L}$ for sufficiently large $m$, we can assume that $U_n$ is $\pi$-adically separated for all $n\geq 0$. By \cite[Lemma 2.5]{Ardakov1}, it is sufficient to prove that $\widehat{U_1}_K\to \widehat{U}_K$ is flat on both sides.\\
We are going to apply \cite[Proposition 5.3.10]{Emerton}, using the same kind of filtration as done in \cite[Lemma 3]{Ardakovnonsplit}.\\
Give $U$ the quotient filtration $F_iU$ induced from the surjection $U_{\mathcal{A}}(\mathcal{L})\to U$, i.e. $F_0U=\mathcal{A}$, $F_1U=\mathcal{L}+\mathcal{A}$, $F_iU=F_1U\cdot F_{i-1}U$ for $i\geq 2$. Now define a new filtration by
\begin{equation*}
F'_iU=U_1\cdot F_iU.
\end{equation*}
Note that $U$ is $\pi$-torsionfree (as it is a subring of the $K$-algebra $U(L)$), $\pi$-adically separated by assumption, left and right Noetherian (image of the Noetherian ring $U(\mathcal{L})$), and $U_1\subseteq U\subseteq U_1\otimes_R K=U_A(L)$.\\
\\
We now check the conditions specified in Proposition \cite[Lemma 5.3.9, Proposition 5.3.10]{Emerton}, in analogy to the proof of \cite[Lemma 3]{Ardakovnonsplit}.\\
Condition (ii) is clear: $F'_0U=U_1$.\\
For condition (i) we need to show that $F'_iU\cdot F'_jU\subseteq F'_{i+j}U$. For this it clearly suffices to show that $F_iU\cdot U_1\subseteq F'_iU$. Since $[\mathcal{L}, \pi \mathcal{L}]\subseteq \pi \mathcal{L}$, we have $[\mathcal{L}, U_1]\subseteq U_1$, i.e. 
\begin{equation*}
\mathcal{L}\cdot U_1\subseteq U_1\cdot \mathcal{L}+U_1.
\end{equation*}
Then inductively $F_iU\cdot U_1\subseteq F'_iU$ as required.\\
Condition (iii) requires that $\gr'U$ is finitely generated as an algebra over $U_1$ by central elements. Since $F_iU\subseteq F'_iU$ for each $i$, we have algebra morphisms 
\begin{equation*}
\begin{xy}
\xymatrix{
\Sym\mathcal{L}\ar[r]& \gr U_{\mathcal{A}}(\mathcal{L})\ar[r]& \gr U \ar[r]^{\sigma}& \gr' U,
}
\end{xy}
\end{equation*}
with the first two arrows being surjections. If $\mathcal{L}$ is generated by $\partial_1, \dots, \partial_m$ as an $\mathcal{A}$-module, write $\overline{\partial}_j$ for the symbol of $\partial_j$ in $\gr U$. We claim that $\gr' U$ is generated by the images of the $\overline{\partial}_j$'s over $U_1$, i.e. it is generated by $U_1$ and the image of $\sigma$.\\
First, let us establish the following notation: for $u\in U$, write $\overline{u}$ for its symbol in $\gr U$ and $\overline{u}'$ for its symbol in $\gr' U$.\\
Let 
\begin{equation*}
x\in \frac{F'_iU}{F'_{i-1}U},
\end{equation*}
and assume without loss of generality that there exist $u\in U_1$, $y\in F_iU$ such that
\begin{equation*}
\overline{u\cdot y}'=x.
\end{equation*}
If $y\in U_1F_{i-1}U=F'_{i-1}U$, we have $x=0$ and $x$ is obviously in the subalgebra generated by $\im\sigma$ over $U_1$. If $y\notin U_1F_{i-1}U$, both $\overline{y}$ and $\overline{y}'$ live in degree $i$, and we therefore have $\overline{y}'=\sigma(\overline{y})$. Since $F'_0U=U_1$ (and $\overline{u\cdot y}'$ also lives in degree $i$), it follows that
\begin{equation*}
x=\overline{u\cdot y}'=\overline{u}'\cdot \overline{y}'=\overline{u}'\cdot \sigma(\overline{y}),
\end{equation*}
proving the claim.\\
It remains to show that the $\sigma(\overline{\partial_j})$ are central. By commutativity of $\Sym\mathcal{L}$, we are done if we can show that these generators commute with everything in $F'_0U=U_1$.\\
But since $[\mathcal{L}, U_1]\subseteq U_1$ in $U$, we see that for any index $j$ and any $x\in U_1$, $\partial_jx-x\partial_j\in U_1=F'_0U$, so the $\sigma(\overline{\partial_j})$'s commute with $U_1$ in $\gr' U$.\\
Hence we can apply \cite[Proposition 5.3.10]{Emerton} to get that the morphism 
\begin{equation*}
\widehat{U_1}_K\to \widehat{U}_K
\end{equation*}
is flat on the right and on the left (note \cite{Emerton} only treats the case of left Noetherian $\mathbb{Z}_p$-algebras, but the proof naturally generalizes to left (or right) Noetherian $R$-algebras). \\
\\
Thus $\wideparen{U(L)}$ is an inverse limit of Noetherian Banach algebras $\widehat{U(\pi^n\mathcal{L})}_K$ with flat connecting morphisms and dense images, as required. 
\end{proof}
Note that we have shown that the Fr\'echet-Stein structure of $\wideparen{U(L)}$ is exhibited by \emph{any} $(R, \mathcal{A})$-Lie lattice $\mathcal{L}$, in the sense that for sufficiently large $n$, $\widehat{U(\pi^n\mathcal{L})}_K$ plays the role of the $U_n$ in Definition \ref{defineFS}.

\section{A global construction of $\wideparen{\mathscr{U}(\mathscr{L})}$ and further results}
\subsection{The sheaf $\wideparen{\mathscr{U}(\mathscr{L})}$}
We will now introduce the sheaf analogue of the theory in the previous section.

\begin{definition}[{see \cite[Definition 9.1]{Ardakov1}}]
\label{algebroid}
A \textbf{Lie algebroid} on a rigid analytic $K$-variety $X$ is a pair $(\rho, \mathscr{L})$ such that $\mathscr{L}$ is a locally free $\mathcal{O}_X$-module of finite rank on $X_{\mathrm{rig}}$ which is also a sheaf of $K$-Lie algebras, and $\rho: \mathscr{L}\to \mathcal{T}_X$ is an $\mathcal{O}$-linear map of sheaves of Lie algebras, satisfying
\begin{equation*}
[x, ay]=a[x, y]+\rho(x)(a)y
\end{equation*}
for any $x, y\in \mathscr{L}(U)$, $a\in \mathcal{O}_X(U)$, $U$ an admissible open subset of $X$.
\end{definition} 
Note that $\mathcal{T}_X$ is naturally a Lie algebroid on $X$ whenever $X$ is smooth.\\
\\
Let $X=\Sp A$ be an affinoid $K$-space and assume that $\mathscr{L}$ is a Lie algebroid on $X$ (the general case will then follow from glueing). We write $L=\mathscr{L}(X)$.\\
Fix an affine formal model $\mathcal{A}\subset A$ and an $(R, \mathcal{A})$-Lie lattice $\mathcal{L}\subset L$. We will first need to recall some notions of affinoid subdomains $Y$ of $X$ behaving well with respect to $\mathcal{L}$.

\begin{definition}[{see \cite[Definitions 3.1, 3.2]{Ardakov1}}]
\label{admissiblesubdom}
Let $Y=\Sp B$ be an affinoid subdomain of $X$, and let $\sigma: A\to B$ be the restriction map. We say $U$ is $\mathcal{L}$-\textbf{admissible} if $B$ has an affine formal model $\mathcal{B}$ such that $\sigma(\mathcal{A})\subseteq \mathcal{B}$ and the induced action of $\mathcal{L}$ on $B$ preserves $\mathcal{B}$. We call such a $\mathcal{B}$ an $\mathcal{L}$-\textbf{stable} affine formal model.
\end{definition}

We remark that any $Y$ is $\pi^n\mathcal{L}$-admissible for sufficiently large $n$. Picking any affine formal model $\mathcal{B}'$ for $B$, $\mathcal{B}:=\sigma(\mathcal{A})\mathcal{B}'$ is also an affine formal model by \cite[Lemma 3.1]{Ardakov1}, and it is preserved by $\pi^n\mathcal{L}$ for sufficiently large $n$, as $\mathcal{B}$ is topologically of finite type.\\
\\
The $\mathcal{L}$-admissible affinoid subdomains give rise to the G-topology $X_w(\mathcal{L})$ (see \cite[Lemma 3.2]{Ardakov1}).\\
For most of our purposes, $\mathcal{L}$-admissibility will not be a sufficiently strong property. Recall from the Gerritzen--Grauert Theorem Theorem \cite[Theorem 3.3/20]{Bosch} and from \cite[Proposition 3.3/16]{Bosch} that any affinoid subdomain of $X$ is the union of finitely many rational domains, and every rational domain can be obtained by iterating the construction of Laurent domains. This motivates the stronger notion of an $\mathcal{L}$-\textbf{accessible} subdomain, as introduced in \cite{Ardakov1}. We briefly recall the definitions, and refer to \cite{Ardakov1} for the proofs of the essential properties.

\begin{definition}[{see \cite[Definition 4.6]{Ardakov1}}]
\label{rationalac}
Let $Y$ be a rational subdomain of $X$. If $Y=X$, we say that it is $\mathcal{L}$-\textbf{accessible} in 0 steps. Inductively, if $n\geq 1$ then we say that it is $\mathcal{L}$-accessible in $n$ steps if there exists a chain $Y\subseteq Z\subseteq X$ such that the following is satisfied:
\begin{enumerate}[(i)]
\item $Z\subseteq X$ is $\mathcal{L}$-accessible in $(n-1)$ steps,
\item $Y=Z(f)$ or $Z(f^{-1})$ for some non-zero $f\in \mathcal{O}(Z)$,
\item there is an $\mathcal{L}$-stable affine formal model $\mathcal{C}\subset \mathcal{O}(Z)$ such that $\mathcal{L}\cdot f\subseteq \mathcal{C}$.
\end{enumerate}
A rational subdomain $Y\subseteq X$ is said to be $\mathcal{L}$-accessible if it is $\mathcal{L}$-accessible in $n$ steps for some $n\in \mathbb{N}$.
\end{definition} 
We will see below (see also \cite[Lemma 4.3]{Ardakov1}) that every $\mathcal{L}$-accessible rational subdomain is $\mathcal{L}$-admissible.

\begin{definition}[{see \cite[Definition 4.8]{Ardakov1}}]
\label{generalac}
An affinoid subdomain $Y$ of $X$ is said to be $\mathcal{L}$-\textbf{accessible} if it is $\mathcal{L}$-admissible and there exists a finite covering $Y=\cup_{j=1}^r X_j$ where each $X_j$ is an $\mathcal{L}$-accessible rational subdomain of $X$.\\
A finite covering $\{X_j\}$ of $X$ by affinoid subdomains is said to be $\mathcal{L}$-accessible if each $X_j$ is an $\mathcal{L}$-accessible affinoid subdomain of $X$.
\end{definition}

It is shown in \cite[Lemma 4.8]{Ardakov1} that the $\mathcal{L}$-accessible affinoid subdomains of $X$ together with the $\mathcal{L}$-accessible coverings form a Grothendieck topology $X_{\ac}(\mathcal{L})$ on $X$ (in \cite{Ardakov1}, it is assumed that $\mathcal{L}$ is a projective $\mathcal{A}$-module, but this assumption is not used in the proof).\\
\\
%Assuming for now that every rational subdomain is $\pi^n\mathcal{L}$-accessible for sufficiently large $n$ (see below), it is easy to see that the same holds for any affinoid subdomain $Y$ and for any finite covering by affinoid subdomains.\\
Note that if $Y=\Sp B$ is $\mathcal{L}$-accessible with $\mathcal{L}$-stable affine formal model $\mathcal{B}$, then $\mathcal{B}\otimes_{\mathcal{A}} \mathcal{L}$ is a $(R, \mathcal{B})$-Lie algebra, and so its image in $B\otimes_A L=\mathscr{L}(Y)$ (which is known to be obtained by quotienting out by the $\pi$-torsion) is an $(R, \mathcal{B})$-Lie lattice.\\
\\
We briefly recall some results and notation from \cite{Ardakov1} relating to $\mathcal{L}$-accessible affinoid subdomains.\\
The inductive nature of the definition of an accessible rational subdomain suggests that we should consider the basic cases of rational subdomains which are $\mathcal{L}$-accessible in one step. For this purpose, let $f\in A$ be a non-zero element, and choose $a\in \mathbb{N}$ such that $\pi^af\in \mathcal{A}$. We write
\begin{equation*}
u_1=\pi^at-\pi^af, \ \ u_2=\pi^aft-\pi^a
\end{equation*}
as elements of $A\langle t\rangle$.\\
We will consider the subdomains
\begin{equation*}
X_1=X(f), \ \ X_2=X(f^{-1}).
\end{equation*} 
Note that $X_i=\Sp C_i$, where 
\begin{equation*}
C_i=A\langle t\rangle/u_iA\langle t \rangle
\end{equation*}
for $i=1, 2$.\\
Write $x\cdot f=\rho(x)(f)$ for $x\in L$, and assume that $\mathcal{L}\cdot f\subset \mathcal{A}$. 

\begin{proposition}[{\cite[Proposition 4.2]{Ardakov1}}]
\label{twolifts}
There exist two lifts $\sigma_1, \sigma_2: \mathcal{L}\to \mathrm{Der}_R(\mathcal{A}\langle t\rangle)$ of the action of $\mathcal{L}$ on $\mathcal{A}$ to $\mathcal{A}\langle t\rangle$, given by
\begin{equation*}
\sigma_1(x)(t)=x\cdot f, \ \ \sigma_2(x)(t)=-t^2(x\cdot f)
\end{equation*}
\end{proposition}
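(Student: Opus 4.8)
The plan is to verify directly that the formulas for $\sigma_1$ and $\sigma_2$ define derivations of $\mathcal{A}\langle t\rangle$ which restrict to the given action of $\mathcal{L}$ on $\mathcal{A}$, and which preserve $\mathcal{A}\langle t\rangle$ (rather than just mapping into $\mathrm{Der}_R(A\langle t\rangle)$), and finally that $x\mapsto \sigma_i(x)$ is a Lie algebra homomorphism. The starting observation is that $\mathcal{A}\langle t\rangle$ is the $\pi$-adic completion of $\mathcal{A}[t]$, so a continuous $R$-derivation of $\mathcal{A}\langle t\rangle$ is uniquely determined by its restriction to $\mathcal{A}$ together with its value on $t$; conversely, given an $R$-derivation $\delta_0$ of $\mathcal{A}$ (here the action of some $x\in\mathcal{L}$) and an element $g\in\mathcal{A}\langle t\rangle$, the assignment $a\mapsto \delta_0(a)$ on $\mathcal{A}$ and $t\mapsto g$ extends uniquely to a derivation of $\mathcal{A}[t]$ by the Leibniz rule, namely $\sum a_n t^n \mapsto \sum \delta_0(a_n) t^n + \sum n a_n t^{n-1} g$, and this is $\pi$-adically continuous, hence extends to $\mathcal{A}\langle t\rangle$. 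So I would first record this extension principle (it is surely already used implicitly in \cite{Ardakov1}, and one may just cite Proposition \ref{twolifts}'s analogue there or spell it out in a line).

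Next I would check the two candidate values for $t$ actually lie in $\mathcal{A}\langle t\rangle$. For $\sigma_1$ this is immediate: $\sigma_1(x)(t)=x\cdot f\in\mathcal{A}\subseteq\mathcal{A}\langle t\rangle$ by the running hypothesis $\mathcal{L}\cdot f\subseteq\mathcal{A}$. For $\sigma_2$ we have $\sigma_2(x)(t)=-t^2(x\cdot f)\in\mathcal{A}\langle t\rangle$ as well, again since $x\cdot f\in\mathcal{A}$. Hence by the extension principle $\sigma_i(x)$ is a well-defined element of $\mathrm{Der}_R(\mathcal{A}\langle t\rangle)$, and tensoring with $K$ it lands in $\mathrm{Der}_R(A\langle t\rangle)$ lifting the action on $A$; restricting along $\mathcal{A}\to\mathcal{A}\langle t\rangle$ recovers the action of $\mathcal{L}$ on $\mathcal{A}$ by construction. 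The $\mathcal{A}$-linearity of $x\mapsto \sigma_i(x)$ in the appropriate sense, and $R$-linearity, are clear from the formulas since $x\cdot f$ depends $\mathcal{A}$-linearly on $x$ via $\rho$.

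The one genuine computation is that each $\sigma_i$ is a homomorphism of $R$-Lie algebras, i.e. $[\sigma_i(x),\sigma_i(y)]=\sigma_i([x,y])$ for $x,y\in\mathcal{L}$. Both sides are derivations of $\mathcal{A}\langle t\rangle$, so by the extension principle it suffices to check they agree on $\mathcal{A}$ and on $t$. On $\mathcal{A}$: $[\sigma_i(x),\sigma_i(y)]$ restricts to the commutator of the actions of $x$ and $y$ on $\mathcal{A}$, which is the action of $[x,y]$ since $\mathcal{L}\to\mathrm{Der}_R(\mathcal{A})$ is already a Lie homomorphism (part of $\mathcal{L}$ being an $(R,\mathcal{A})$-Lie lattice), matching $\sigma_i([x,y])$ on $\mathcal{A}$. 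On $t$ one must expand: for $\sigma_1$, $[\sigma_1(x),\sigma_1(y)](t)=\sigma_1(x)(y\cdot f)-\sigma_1(y)(x\cdot f)=x\cdot(y\cdot f)-y\cdot(x\cdot f)=[x,y]\cdot f=\sigma_1([x,y])(t)$, using that $\sigma_1(x)$ acts on elements of $\mathcal{A}\subseteq\mathcal{A}\langle t\rangle$ through the original action of $x$. For $\sigma_2$ the bracket on $t$ produces, via the Leibniz rule applied to $-t^2(y\cdot f)$, a term $-t^2\,x\cdot(y\cdot f)$ from the $\mathcal{A}$-part plus a term $2t(x\cdot f)\cdot t^2(y\cdot f)$ from differentiating $t^2$; the symmetric subtraction kills the second-order terms because $2t(x\cdot f)t^2(y\cdot f)-2t(y\cdot f)t^2(x\cdot f)=0$, leaving $-t^2\big(x\cdot(y\cdot f)-y\cdot(x\cdot f)\big)=-t^2([x,y]\cdot f)=\sigma_2([x,y])(t)$. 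So the hard part — really the only part requiring care — is this cancellation of the $t^3$-terms in the $\sigma_2$ computation, and it works precisely because of the quadratic shape $-t^2(x\cdot f)$, which is why this particular formula appears. I expect no further obstacle; everything else is bookkeeping with the universal extension property of derivations on a polynomial (completed) ring.
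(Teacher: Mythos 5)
The paper does not prove this statement; it simply cites it from Ardakov--Wadsley \cite[Proposition 4.2]{Ardakov1}, so there is no internal proof to compare against. Your argument is correct and is essentially the standard one the cited reference uses: the extension principle for $R$-derivations of $\mathcal{A}\langle t\rangle$ (determined by the restriction to $\mathcal{A}$ and the value on $t$, using that the $(R,\mathcal{A})$-Lie lattice condition forces $\rho(x)(\mathcal{A})\subseteq\mathcal{A}$), the observation that the hypothesis $\mathcal{L}\cdot f\subseteq\mathcal{A}$ places the prescribed values $x\cdot f$ and $-t^2(x\cdot f)$ inside $\mathcal{A}\langle t\rangle$, and the bracket check on $\mathcal{A}$ and on $t$ separately, with the cancellation of the $t^3$-terms for $\sigma_2$ coming from commutativity of $\mathcal{A}\langle t\rangle$.
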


It can be shown (see \cite[Lemma 4.3]{Ardakov1}) that $\sigma_i$ induces an $\mathcal{L}$-action on $C_i$ which agrees with the action defined via $\mathscr{L}(X)\to \mathscr{L}(X_i)$. Thus $X_i$ is an $\mathcal{L}$-admissible affinoid subdomain, with $\mathcal{L}$-stable affine formal model $\overline{\mathcal{C}_i}$, where
\begin{equation*}
\mathcal{C}_i=\mathcal{A}\langle t \rangle /u_i\mathcal{A}\langle t\rangle
\end{equation*}
and
\begin{equation*}
\overline{\mathcal{C}_i}=\mathcal{C}_i/\pi\mathrm{-tor}(\mathcal{C}_i).
\end{equation*}
Note this also verifies by an easy inductive argument that every $\mathcal{L}$-accessible rational subdomain is $\mathcal{L}$-admissible.

\begin{lemma}[{\cite[Proposition 7.6]{Ardakov1}}]
\label{finallyac}
Let $Y$ be an affinoid subdomain of $X$. Then $Y$ is $\pi^n\mathcal{L}$-accessible for sufficiently large $n$. Any finite affinoid covering of $X$ is $\pi^n\mathcal{L}$-accessible for sufficiently large $n$.
\end{lemma}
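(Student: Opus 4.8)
\emph{Proof proposal.} The plan is to reduce both statements to finitely many instances of the problem ``find \emph{some} $n$ making a given subdomain $\pi^n\mathcal{L}$-accessible'', to solve that problem one Laurent-type step at a time, and then to combine the finitely many answers by taking a maximum. The two ingredients that make this work are a monotonicity observation and the fact that finitely generated $\mathcal{A}$-submodules of $\mathcal{O}(Z)$ are absorbed into any affine formal model after rescaling $\mathcal{L}$.

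First I would record the monotonicity: if $Y$ is $\pi^m\mathcal{L}$-accessible and $n\geq m$, then $Y$ is $\pi^n\mathcal{L}$-accessible. Indeed $\pi^n\mathcal{L}\subseteq \pi^m\mathcal{L}$, so any $\pi^m\mathcal{L}$-stable affine formal model is $\pi^n\mathcal{L}$-stable, and any inclusion $\pi^m\mathcal{L}\cdot f\subseteq \mathcal{C}$ gives $\pi^n\mathcal{L}\cdot f\subseteq \mathcal{C}$; hence the very same chain of subdomains appearing in Definition \ref{rationalac} witnesses $\pi^n\mathcal{L}$-accessibility of a rational subdomain, and the covering in Definition \ref{generalac} is unchanged. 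The analogous (more elementary) statement holds for $\mathcal{L}$-admissibility. Together with the remark preceding Definition \ref{rationalac} that every affinoid subdomain is $\pi^n\mathcal{L}$-admissible for $n\gg 0$, this reduces the lemma to producing, for each of finitely many subdomains, one integer that makes it accessible.

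Next comes the single-step estimate. Suppose $Z\subseteq X$ is an affinoid subdomain known to be $\pi^{n_0}\mathcal{L}$-admissible, with $\pi^{n_0}\mathcal{L}$-stable affine formal model $\mathcal{C}\subset \mathcal{O}(Z)$, and let $0\neq f\in \mathcal{O}(Z)$. Writing $\mathcal{L}=\sum_i \mathcal{A}\partial_i$, the set $\mathcal{L}\cdot f=\sum_i \mathcal{A}\,\rho(\partial_i)(f)$ spans a finitely generated, hence bounded, $\mathcal{A}$-submodule of $\mathcal{O}(Z)=\mathcal{C}\otimes_R K$; so there is $m\geq n_0$ with $(\pi^m\mathcal{L})\cdot f\subseteq \mathcal{C}$, and $\mathcal{C}$ remains $\pi^m\mathcal{L}$-stable. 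Thus if $Z$ is $\pi^m\mathcal{L}$-accessible in $k$ steps, then $Z(f)$ and $Z(f^{-1})$ are $\pi^m\mathcal{L}$-accessible in $k+1$ steps by condition (iii) of Definition \ref{rationalac}. To prove the first assertion of the lemma I would now invoke the Gerritzen--Grauert theorem \cite[Theorem 3.3/20]{Bosch} and \cite[Proposition 3.3/16]{Bosch} to write $Y=\bigcup_{j=1}^r Y_j$ with each $Y_j$ rational in $X$ and obtained by a finite chain $Y_j=Z_{j,\ell_j}\subseteq \dots\subseteq Z_{j,0}=X$ of single Laurent-type passages. Each node $Z_{j,i}$ is $\pi^n\mathcal{L}$-admissible for $n\gg 0$; applying the single-step estimate along each chain (finitely many steps, finitely many chains, plus the admissibility of $Y$ itself) yields one integer $n$ such that an induction on $i$ along each chain shows every $Y_j$ is $\pi^n\mathcal{L}$-accessible, and $Y=\bigcup Y_j$ then exhibits $Y$ as $\pi^n\mathcal{L}$-accessible via Definition \ref{generalac}. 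For the second assertion, apply the first to each $X_j$ in the given finite covering to get $n_j$, set $n=\max_j n_j$, and use monotonicity.

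The main obstacle is purely organisational rather than analytic: one must fix the lattice $\pi^n\mathcal{L}$ \emph{once} so that it simultaneously satisfies condition (iii) of Definition \ref{rationalac} at every node of every chain in the Gerritzen--Grauert decomposition of $Y$ (and the admissibility conditions on top), which is exactly where the monotonicity observation and the absorption of $\mathcal{L}\cdot f$ into a rescaled formal model do the real work; no new geometric input beyond the structure theory of affinoid subdomains is needed.
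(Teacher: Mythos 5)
Your proof is correct and is the natural argument given the definitions; the paper itself only cites \cite[Proposition 7.6]{Ardakov1} rather than spelling out a proof, but the argument there is of the same shape. The three ingredients you isolate — monotonicity in the lattice $\pi^n\mathcal{L}$, the single-step absorption estimate $\pi^m\mathcal{L}\cdot f\subseteq\mathcal{C}$ (which uses only that $\mathcal{L}$ is a finitely generated $\mathcal{A}$-module and that $\mathcal{O}(Z)=\mathcal{C}\otimes_R K$), and the Gerritzen--Grauert reduction to chains of Weierstrass/inverse-Weierstrass passages — are exactly what the definition of $\mathcal{L}$-accessibility was built around, and the paper's surrounding remarks (the one just after Definition \ref{admissiblesubdom}, and the sentence motivating Definition \ref{rationalac}) already supply those two ingredients in the form you invoke them. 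One minor point worth making explicit: in the induction along a chain $X=Z_{j,0}\supseteq\cdots\supseteq Z_{j,\ell_j}=Y_j$ you need, at each node, a $\pi^{n_i}\mathcal{L}$-stable affine formal model of $\mathcal{O}(Z_{j,i})$ to feed into the single-step estimate; this is supplied by the fact (recorded just after Proposition \ref{twolifts}, via \cite[Lemma 4.3]{Ardakov1}) that every $\mathcal{L}$-accessible rational subdomain is $\mathcal{L}$-admissible, not merely by the coarser ``every affinoid is $\pi^n\mathcal{L}$-admissible for $n\gg 0$'' remark, since you want a model compatible with the $n_i$ already chosen. With that clarification the induction closes cleanly, and the argument makes no use of projectivity of $\mathcal{L}$, which is precisely what justifies importing the statement into the smoothness-free setting of this paper.
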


Many of our arguments will now establish properties first for $\mathcal{L}$-accessible rational subdomains in one step by analyzing the structures above and then argue by induction. The next lemma is the first instance of this strategy.

\begin{lemma}
\label{torsionofsubdom}
Let $Y=\Sp B$ be an $\mathcal{L}$-accessible rational subdomain of $X$ with $\mathcal{L}$-stable affine formal model $\mathcal{B}$. Then 
\begin{equation*}
\Tor^{\mathcal{A}}_s\left(\mathcal{B}, U_{\mathcal{A}}(\mathcal{L})\right)
\end{equation*}
has bounded $\pi$-torsion for each $s\geq 0$.
\end{lemma}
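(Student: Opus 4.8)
The statement concerns $\Tor^{\mathcal{A}}_s(\mathcal{B}, U_{\mathcal{A}}(\mathcal{L}))$ for $Y = \Sp B$ an $\mathcal{L}$-accessible rational subdomain. Since accessibility is defined inductively (accessible in $n$ steps), the natural strategy is induction on the number of steps. The base case $Y = X$ is trivial, since then $\mathcal{B}$ may be taken to be $\mathcal{A}$ itself (or differs from it by bounded torsion), and $\Tor^{\mathcal{A}}_s(\mathcal{A}, U_{\mathcal{A}}(\mathcal{L})) = 0$ for $s \geq 1$. For the inductive step, we have a chain $Y \subseteq Z \subseteq X$ with $Z$ accessible in $(n-1)$ steps and $Y = Z(f)$ or $Z(f^{-1})$ for suitable $f$, with an $\mathcal{L}$-stable formal model $\mathcal{C} \subseteq \mathcal{O}(Z)$ such that $\mathcal{L} \cdot f \subseteq \mathcal{C}$. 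The plan is to factor the base change $\mathcal{A} \to \mathcal{B}$ through $\mathcal{A} \to \mathcal{C} \to \mathcal{B}$ and control the Tor groups at each stage.

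\emph{Reduction to the one-step case.} By the spectral sequence (or a transitivity argument using the fact that $\mathcal{BT}$ is a Serre subcategory stable under the relevant operations) for the composite $\mathcal{A} \to \mathcal{C} \to \mathcal{B}$, it suffices to show: (a) $\Tor^{\mathcal{A}}_s(\mathcal{C}, U_{\mathcal{A}}(\mathcal{L}))$ has bounded $\pi$-torsion for all $s$ — this is exactly the inductive hypothesis applied to $Z$ (with $\mathcal{L}$-stable model $\mathcal{C}$); and (b) the one-step base change from $\mathcal{C}$, equipped with the induced Lie lattice $\mathcal{C} \otimes_{\mathcal{A}} \mathcal{L}$ (modulo $\pi$-torsion), to $\mathcal{B} = \overline{\mathcal{C}_i}$ preserves bounded $\pi$-torsion of the Tor groups. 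So the crux is the one-step case: replacing $\mathcal{A}$ by $\mathcal{C}$, $\mathcal{L}$ by the induced lattice, and $B$ by $C_i$, I must show $\Tor^{\mathcal{A}}_s(\overline{\mathcal{C}_i}, U_{\mathcal{A}}(\mathcal{L}))$ has bounded $\pi$-torsion, where $\mathcal{C}_i = \mathcal{A}\langle t\rangle / u_i \mathcal{A}\langle t\rangle$ with $u_i$ as in the excerpt.

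\emph{The one-step computation.} Here I would exploit the explicit presentation. First, $\overline{\mathcal{C}_i}$ and $\mathcal{C}_i$ differ by the $\pi$-torsion submodule of $\mathcal{C}_i$, which is bounded (being a quotient of a Noetherian ring, or by the arguments in \cite{Ardakov1}); by Lemma \ref{sequenceoftorsion} and the long exact Tor sequence it is enough to bound the $\pi$-torsion of $\Tor^{\mathcal{A}}_s(\mathcal{C}_i, U_{\mathcal{A}}(\mathcal{L}))$. Now $\mathcal{C}_i$ has a two-term free resolution over $\mathcal{A}\langle t\rangle$, namely $0 \to \mathcal{A}\langle t\rangle \xrightarrow{u_i} \mathcal{A}\langle t\rangle \to \mathcal{C}_i \to 0$ (the map $u_i$ being injective since $u_i$ is a non-zero-divisor in the domain, up to passing to torsion-free quotients). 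Since $\mathcal{A}\langle t\rangle$ is flat over $\mathcal{A}$ (it is $\pi$-adically completed polynomial ring, flat over $\mathcal{A}$), we get $\Tor^{\mathcal{A}}_s(\mathcal{C}_i, U_{\mathcal{A}}(\mathcal{L})) = \Tor^{\mathcal{A}\langle t\rangle}_s(\mathcal{C}_i, \mathcal{A}\langle t\rangle \otimes_{\mathcal{A}} U_{\mathcal{A}}(\mathcal{L}))$, which for $s \geq 2$ vanishes and for $s \leq 1$ is computed by the complex $\mathcal{A}\langle t\rangle \otimes_{\mathcal{A}} U_{\mathcal{A}}(\mathcal{L}) \xrightarrow{u_i} \mathcal{A}\langle t\rangle \otimes_{\mathcal{A}} U_{\mathcal{A}}(\mathcal{L})$. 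So $\Tor_s = 0$ for $s \geq 2$, $\Tor_0 = \mathcal{C}_i \otimes_{\mathcal{A}} U_{\mathcal{A}}(\mathcal{L})$, and $\Tor_1$ is the kernel of multiplication by $u_i$ on $\mathcal{A}\langle t\rangle \widehat{\otimes}_{\mathcal{A}} U_{\mathcal{A}}(\mathcal{L})$. One identifies $\mathcal{A}\langle t\rangle \otimes_{\mathcal{A}} U_{\mathcal{A}}(\mathcal{L})$ with $U_{\mathcal{A}\langle t\rangle}(\mathcal{A}\langle t\rangle \otimes_{\mathcal{A}} \mathcal{L})$ (up to completion/torsion, using \cite[Proposition 2.3]{Ardakov1}), which by Rinehart's theorem is a $t$-adically-controlled free module; the kernel of multiplication by the monic-in-$t$ element $u_i$ on such a module is then bounded $\pi$-torsion — this follows because after inverting $\pi$, $u_i$ becomes a non-zero-divisor on $U_A(L)\langle t\rangle$ (equivalently $C_i \otimes_A U_A(L)$ is a localization, hence flat, so the analogous $K$-linear Tor vanishes), and then a degree/Noetherianity argument on the filtered pieces $F_n U_{\mathcal{A}}(\mathcal{L})$ shows the $\pi$-torsion of this kernel is killed by a single power of $\pi$ uniformly.

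\emph{Main obstacle.} The genuinely delicate point is (b), the one-step base change, and within it the uniform bound on the $\pi$-torsion of the $u_i$-multiplication kernel: a priori each graded piece of the degree filtration on $U_{\mathcal{A}}(\mathcal{L})$ could contribute torsion, and one must verify these bounds are uniform in the filtration degree. I expect this to require combining Rinehart's PBW description (Theorem \ref{Rinehart}) with the fact that $\gr U_{\mathcal{A}}(\mathcal{L})$ is a Noetherian ring (so that the relevant annihilators stabilize), together with the observation that $\mathcal{L} \cdot f \subseteq \mathcal{C}$ ensures the two lifts $\sigma_i$ of Proposition \ref{twolifts} genuinely land in $\Der_R(\mathcal{A}\langle t\rangle)$, making the commutation relations between $u_i$ and the generators of $\mathcal{L}$ live in low filtration degree. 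Threading the boundedness through the \emph{composite} chain in the induction — i.e.\ checking that the Serre-subcategory/transitivity argument for $\mathcal{A} \to \mathcal{C} \to \mathcal{B}$ is legitimate when only bounded-torsion control (not flatness) is available — will also need a small argument, most naturally via the change-of-rings spectral sequence $\Tor^{\mathcal{C}}_p(\mathcal{B}, \Tor^{\mathcal{A}}_q(\mathcal{C}, -)) \Rightarrow \Tor^{\mathcal{A}}_{p+q}(\mathcal{B}, -)$ combined with repeated application of Lemma \ref{sequenceoftorsion}.
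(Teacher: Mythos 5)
Your high-level plan — induction on the number of accessibility steps, with the base case trivial and the inductive step reducing to a single Laurent/rational step — matches the paper. But the mechanism you propose for the inductive step is genuinely different, and harder than it needs to be.

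The paper does \emph{not} pass through a change-of-rings spectral sequence for $\mathcal{A}\to\mathcal{C}\to\mathcal{B}$. Everything stays in $\Tor^{\mathcal{A}}_*(-,U_{\mathcal{A}}(\mathcal{L}))$. One takes the short exact sequence $0\to\mathcal{C}\langle t\rangle\xrightarrow{u_i\cdot}\mathcal{C}\langle t\rangle\to\mathcal{C}_i\to0$ simply as a sequence of $\mathcal{A}$-modules, producing a long exact sequence of $T_s:=\Tor^{\mathcal{A}}_s(-,U_{\mathcal{A}}(\mathcal{L}))$. The crucial input is the flat base change $T_s(\mathcal{C}\langle t\rangle)\cong\mathcal{C}\langle t\rangle\otimes_{\mathcal{C}}T_s(\mathcal{C})$ (since $\mathcal{C}\langle t\rangle$ is $\mathcal{C}$-flat), which ties the terms of the long exact sequence directly to the inductive hypothesis on $T_s(\mathcal{C})$; then Lemma \ref{sequenceoftorsion}, applied to $T_s(\mathcal{C}\langle t\rangle)\to T_s(\mathcal{C}_i)\to T_{s-1}(\mathcal{C}\langle t\rangle)$, gives bounded $\pi$-torsion of $T_s(\mathcal{C}_i)$. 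Two further short exact sequences handle the passage from $\mathcal{C}_i$ to $\overline{\mathcal{C}_i}$ and from $\overline{\mathcal{C}_i}$ to an arbitrary $\mathcal{L}$-stable model. No spectral sequence appears.

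Two substantive cautions about your route. First, ``bounded $\pi$-torsion'' is \emph{not} a Serre subcategory of $R$-modules: it is closed under submodules and extensions, but not under quotients (e.g.\ $\prod_n R\twoheadrightarrow\prod_n R/\pi^nR$). Your spectral-sequence argument therefore needs to exploit the stronger fact that for $s\geq1$ the Tor groups $\Tor^{\mathcal{A}}_s(\mathcal{C},U_{\mathcal{A}}(\mathcal{L}))$ are not merely of bounded torsion but are \emph{killed outright} by a power of $\pi$ (because $U_A(L)$ is flat over $A$, so these Tor groups vanish after inverting $\pi$). Modules killed by a fixed power of $\pi$ \emph{are} closed under subquotients, and then the finite filtration of the abutment behaves; if you only track ``bounded torsion submodule'' you cannot descend through $E^\infty$. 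The paper sidesteps this issue entirely. Second, you over-diagnose the difficulty of the one-step kernel: there is no filtration-uniformity issue to worry about, because $\mathcal{C}\langle t\rangle\otimes_{\mathcal{A}}U_{\mathcal{A}}(\mathcal{L})\cong U_{\mathcal{C}\langle t\rangle}(\mathcal{C}\langle t\rangle\otimes_{\mathcal{A}}\mathcal{L})$ is a Noetherian ring (Rinehart's surjection onto $\gr$), hence has bounded $\pi$-torsion, and any submodule — in particular the kernel of multiplication by $u_i$ — inherits the bound. Finally, a small slip: $u_i$ is not monic in $t$; it is $\pi^a$ times a degree-one element, which is why this torsion bookkeeping is needed at all.
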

\begin{proof}
We will abbreviate $\text{Tor}^{\mathcal{A}}_s(\mathcal{B}, U_{\mathcal{A}}(\mathcal{L}))$ to $T_s(\mathcal{B})$.\\
\\
Suppose $Y$ is $\mathcal{L}$-accessible in $n$ steps. We will argue by induction on $n$. The case $n=0$ is straightforward: the statement is trivial for $\mathcal{B}=\mathcal{A}$, but any other $\mathcal{L}$-stable affine formal model (which must contain $\mathcal{A}$ by definition) is the unit ball of some equivalent residue norm (see \cite[Proposition 3.1/20]{Bosch}), so we are done by Lemma \ref{equivalenttorsion} and Lemma \ref{isoequivtorsion}.\\
\\
Now suppose the result holds for $\mathcal{L}$-accessible rational subdomains in $n-1$ steps, and let $Y$ be $\mathcal{L}$-accessible in $n$ steps.\\
For $s=0$, note that
\begin{equation*}
\mathcal{B}\otimes_{\mathcal{A}} U(\mathcal{L})\cong U_{\mathcal{B}}(\mathcal{B}\otimes_{\mathcal{A}} \mathcal{L})
\end{equation*}
is a Noetherian ring by Theorem \ref{Rinehart}, so has bounded $\pi$-torsion.\\
\\
By definition, there exists a chain $Y\subseteq Z\subseteq X$ such that $Z=\Sp C$ is $\mathcal{L}$-accessible in $n-1$ steps, with $\mathcal{L}$-stable affine formal model $\mathcal{C}$, and there is some non-zero $f\in C$ such that $\mathcal{L}\cdot f\subseteq \mathcal{C}$, and $Y=Z(f)=Z_1$ or $Y=Z(f^{-1})=Z_2$. The argument now proceeds in the same way for $i=1$ and $i=2$.\\
Recall from the above that we have short exact sequences
\begin{equation*}
\begin{xy}
\xymatrix{
0 \ar[r] &C\langle t\rangle \ar[r]^{u_i\cdot}& C\langle t\rangle \ar[r] & C_i\ar[r] & 0
}
\end{xy}
\end{equation*}
and
\begin{equation*}
\begin{xy}
\xymatrix{
0 \ar[r] & \mathcal{C}\langle t \rangle \ar[r] & \mathcal{C}\langle t\rangle \ar[r] & \mathcal{C}_i\ar[r] & 0,
}
\end{xy}
\end{equation*}
such that $\overline{\mathcal{C}_i}=\mathcal{C}_i/\pi\text{-tor}(\mathcal{C}_i)\subset C_i$ is an $\mathcal{L}$-stable affine formal model.\\
\\
We will prove the lemma in three steps. First, we will show that $T_s(\mathcal{C}_i)$ has bounded $\pi$-torsion for $s\geq 1$, then prove the same for $T_s(\overline{\mathcal{C}_i})$, and finally make the step from this particular affine formal model to an arbitrary $\mathcal{L}$-stable affine formal model.\\
\\
For the first step, consider the short exact sequence
\begin{equation*}
\begin{xy}
\xymatrix{
0 \ar[r]& \mathcal{C}\langle t\rangle \ar[r]^{u_i\cdot}& \mathcal{C}\langle t\rangle \ar[r] & \mathcal{C}_i \ar[r] & 0,
}
\end{xy}
\end{equation*}
giving rise to the long exact sequence
\begin{equation*}
\dots \to T_s(\mathcal{C}\langle t \rangle)\to T_s(\mathcal{C}\langle t\rangle)\to T_s(\mathcal{C}_i)\to T_{s-1}(\mathcal{C}\langle t\rangle) \to \dots
\end{equation*}
By \cite[Remark 7.3/2]{Bosch}, $\mathcal{C}\langle t\rangle$ is flat over $\mathcal{C}$, and hence
\begin{equation*}
T_s(\mathcal{C}\langle t\rangle)\cong \mathcal{C}\langle t\rangle \otimes_{\mathcal{C}} T_s(\mathcal{C})
\end{equation*}
by \cite[Proposition 3.2.9, Corollary 3.2.10]{Weibel}.\\
Note that for $s\geq 1$, $T_s(\mathcal{C})$ is $\pi$-torsion, since $C$ is flat over $A$ (by \cite[Corollary 4.1/5]{Bosch}), and by inductive hypothesis, it is thus annihilated by some power $\pi^{n_s}$, for some $n_s\in \mathbb{N}$. But then $\pi^{n_s}$ annihilates $T_s(\mathcal{C}\langle t\rangle)$, and we can apply Lemma \ref{sequenceoftorsion} to deduce that $T_s(\mathcal{C}_i)$ has bounded $\pi$-torsion for $s\geq 1$ (for the case $s=1$, we need to know that $\mathcal{C}\langle t\rangle \otimes_{\mathcal{A}} U_{\mathcal{A}}(\mathcal{L})$ has bounded $\pi$-torsion, but this is precisely the Noetherianity argument above).\\
\\
We now pass from $\mathcal{C}_i$ to $\overline{\mathcal{C}_i}$. Note that $\mathcal{C}_i$ is a Noetherian ring, so its $\pi$-torsion $S$ is annihilated by some power $\pi^m$. We have the short exact sequence
\begin{equation*}
0 \to S\to \mathcal{C}_i\to \overline{\mathcal{C}_i}\to 0,
\end{equation*}
and tensoring with $U(\mathcal{L})$ yields the long exact sequence
\begin{equation*}
\dots \to T_s(S)\to T_s(\mathcal{C}_i)\to T_s(\overline{\mathcal{C}_i})\to T_{s-1}(S)\to \dots
\end{equation*}
Now $T_s(S)$ is annihilated by $\pi^m$ for every $s$, and $T_s(\mathcal{C}_i)$ is annihilated by some power of $\pi$ whenever $s\geq 1$ (it has bounded $\pi$-torsion by the above, and $\mathcal{C}_i\otimes_R K=B$ is flat over $A$). Thus we can again apply Lemma \ref{sequenceoftorsion} to see that $T_s(\overline{\mathcal{C}_i})$ has bounded $\pi$-torsion for $j\geq 1$.\\
\\
Lastly, consider an arbitrary $\mathcal{L}$-stable affine formal model $\mathcal{B}$ of $B$. This is the unit ball of some residue norm on $B$, and since $\mathcal{A}\subseteq \mathcal{B}$, this turns $B$ into a Banach $A$-module. Since all residue norms are equivalent, the above in conjunction with Lemma \ref{equivalenttorsion} and Lemma \ref{isoequivtorsion} tells us that $T_s(\mathcal{B})$ has bounded $\pi$-torsion for any $s\geq 1$.
\end{proof}

We now fix an $(R, \mathcal{A})$-Lie lattice $\mathcal{L}$ inside $L$ such that the subalgebra $U_0$ of $U_A(L)$ generated by $\mathcal{A}$ and $\mathcal{L}$ is $\pi$-adically separated. Thus $U_0$ is the unit ball of some norm on $U_A(L)$.\\
\\
We will now define sheaves of algebras $\mathscr{U}_n(\mathscr{L})$ on $X_{\ac}(\pi^n\mathcal{L})$, and set
\begin{equation*}
\wideparen{\mathscr{U}(\mathscr{L})}(U)=\varprojlim \mathscr{U}_n(\mathscr{L})(U)
\end{equation*}
for any affinoid subdomain $U\subseteq X$.\\
Since any affinoid subdomain (and any finite affinoid covering) is in $X_{\text{ac}}(\pi^n\mathcal{L})$ for sufficiently large $n$ by Lemma \ref{finallyac}, this defines a sheaf on $X_w$. Hence we can extend $\wideparen{\mathscr{U}(\mathscr{L})}$ uniquely to a sheaf on $X$ with respect to the strong Grothendieck topology by \cite[Corollary 5.2/5]{Bosch}, which we will also denote by $\wideparen{\mathscr{U}(\mathscr{L})}$. We will show later that this agrees with the construction of $\wideparen{\mathscr{U}(\mathscr{L})}$ in \cite{Ardakov1}.\\
\\
First, let us define $\mathscr{U}_n(\mathscr{L})$.\\
Given $U=\Sp B\subseteq X$ a $\pi^n\mathcal{L}$-accessible subdomain, $\mathcal{B}$ a $\pi^n\mathcal{L}$-stable formal model in $B$, we set
\begin{equation*}
\mathscr{U}_n(\mathscr{L})(U)= \widehat{U_B(\mathscr{L}(U))},
\end{equation*}
where the completion is with respect to the semi-norm whose unit ball is the image of $U_{\mathcal{B}}(\mathcal{B}\otimes_{\mathcal{A}} \pi^n\mathcal{L})$ inside $U_B(\mathscr{L}(U))=U_B(B\otimes_A L)$.\\
Note that by \cite[Proposition 2.3]{Ardakov1}, $\mathscr{U}_n(\mathscr{L})(U)$ is isomorphic to
\begin{equation*}
B\widehat{\otimes}_A U_A(L),
\end{equation*}
where $U_A(L)$ is equipped with the norm with unit ball $U_n$ and $B$ is equipped with the residue norm with unit ball $\mathcal{B}$. Since all residue norms on $B$ are equivalent, this also shows that our definiton of $\mathscr{U}_n(\mathscr{L})$ is independent of choices of $\pi^n\mathcal{L}$-stable affine formal models.\\ 
Clearly $\mathscr{U}_n(\mathscr{L})$ is a presheaf on $X_w(\pi^n\mathcal{L})$. We write $\mathscr{U}_{X, \pi^n\mathcal{L}}$ when we need to stress the dependency on the ambient space $X$ and the choice of Lie lattice.
\begin{lemma}
\label{Dnrestrict}
Let $X=\Sp A$ be an affinoid $K$-space, $\mathcal{L}$ an $(R, \mathcal{A})$-Lie lattice in $L=\mathscr{L}(X)$, and let $Y=\Sp B$ be an $\mathcal{L}$-accessible affinoid subdomain with $\mathcal{L}$-stable affine formal model $\mathcal{B}$. Write $\mathcal{L}'$ for the image of $\mathcal{B}\otimes_{\mathcal{A}} \mathcal{L}$ inside $B\otimes_A L=L'=\mathscr{L}(Y)$. Then $\mathcal{L}'$ is an $(R, \mathcal{B})$-Lie lattice and the restriction $\mathscr{U}_{X, \mathcal{L}}|_Y$ is canonically isomorphic to $\mathscr{U}_{Y, \mathcal{L}'}$ on $Y_{\ac}(\mathcal{L}')$.
\end{lemma}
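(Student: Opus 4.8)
The plan is to reduce the statement to a purely algebraic comparison at the level of the model enveloping algebras and then pass to completions. First I would check that $\mathcal{L}'$ is indeed an $(R,\mathcal{B})$-Lie lattice: it is finitely generated over $\mathcal{B}$ as the image of $\mathcal{B}\otimes_{\mathcal{A}}\mathcal{L}$, it spans $L'$ over $K$ since $\mathcal{B}\otimes_{\mathcal{A}}\mathcal{L}\otimes_R K = B\otimes_A L$, it is closed under the Lie bracket because $\mathcal{L}$ is and the bracket on $L'$ extends the one on $\mathcal{B}\otimes\mathcal{L}$, and the $\mathcal{L}'$-action on $B$ preserves $\mathcal{B}$ precisely because $\mathcal{B}$ was chosen $\mathcal{L}$-stable. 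So $\mathcal{L}'$ has all the required properties, and consequently the G-topology $Y_{\ac}(\mathcal{L}')$ makes sense.

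Next I would address the compatibility of the accessibility structures: if $Z\subseteq Y$ is $\mathcal{L}'$-accessible (as a subdomain of $Y$), then $Z$ is $\mathcal{L}$-accessible as a subdomain of $X$, and $\mathcal{L}'$-stable formal models of $\mathcal{O}(Z)$ are $\mathcal{L}$-stable. This is an induction on the number of accessibility steps, following the inductive definition (Definition \ref{rationalac}): in one step, an $\mathcal{L}'$-accessible rational subdomain $Z(g)$ of $Y$ with $\mathcal{L}'\cdot g\subseteq\mathcal{C}$ for some $\mathcal{L}'$-stable $\mathcal{C}$ is, via the chain $Z(g)\subseteq Z\subseteq Y\subseteq X$, also $\mathcal{L}$-accessible in $X$ since the $\mathcal{L}$-action on $\mathcal{O}(Z)$ agrees with the $\mathcal{L}'$-action (both are induced via $\mathscr{L}(X)\to\mathscr{L}(Z)$ factoring through $\mathscr{L}(Y)$). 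The general affinoid case then follows by taking finite rational coverings. This shows that $Y_{\ac}(\mathcal{L}')$ is a full subcategory of $X_{\ac}(\mathcal{L})$ restricted to subdomains of $Y$, so it makes sense to compare the two sheaves on it.

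The heart of the argument is then the identification of sections. For a subdomain $Z=\Sp C$ in $Y_{\ac}(\mathcal{L}')$ with $\mathcal{L}'$-stable (equivalently $\mathcal{L}$-stable) formal model $\mathcal{C}$, I would compare
\[
\mathscr{U}_{X,\mathcal{L}}(Z) = \widehat{U_C(\mathscr{L}(Z))}, \qquad \mathscr{U}_{Y,\mathcal{L}'}(Z) = \widehat{U_C(\mathscr{L}(Z))},
\]
where in the first the unit ball is the image of $U_{\mathcal{C}}(\mathcal{C}\otimes_{\mathcal{A}}\mathcal{L})$ and in the second it is the image of $U_{\mathcal{C}}(\mathcal{C}\otimes_{\mathcal{B}}\mathcal{L}')$. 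Since $\mathcal{L}'$ is by definition the image of $\mathcal{B}\otimes_{\mathcal{A}}\mathcal{L}$ in $\mathscr{L}(Y)$ and $\mathcal{C}\otimes_{\mathcal{B}}(\mathcal{B}\otimes_{\mathcal{A}}\mathcal{L}) = \mathcal{C}\otimes_{\mathcal{A}}\mathcal{L}$, the two submodules of $\mathscr{L}(Z) = C\otimes_A L$ differ only by a bounded amount of $\pi$-torsion (killed when we pass from $\mathcal{C}\otimes_{\mathcal{A}}\mathcal{L}$ to its image in the torsionfree $C\otimes_A L$); hence the images of the respective enveloping algebras inside $U_C(\mathscr{L}(Z))$ define equivalent semi-norms, and completing gives a canonical isomorphism, compatible with the algebra structure by functoriality of $U_C(-)$. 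Using the description $\mathscr{U}_n(\mathscr{L})(Z)\cong C\widehat{\otimes}_A U_A(L)$ (respectively $C\widehat{\otimes}_B U_B(L')$) and transitivity of completed tensor products over $A\to B\to C$ would be an alternative, perhaps cleaner, route. Finally, naturality in $Z$ — i.e. that these isomorphisms commute with the restriction maps of both sheaves — follows from functoriality of the enveloping algebra and of completion.

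The main obstacle I expect is the bookkeeping in the accessibility induction of the second paragraph: one must be careful that an $\mathcal{L}'$-stable formal model appearing at an intermediate step of an $\mathcal{L}'$-accessibility chain in $Y$ genuinely qualifies as an $\mathcal{L}$-stable formal model in the corresponding $\mathcal{L}$-accessibility chain in $X$, and that condition (iii) of Definition \ref{rationalac} ($\mathcal{L}\cdot f\subseteq\mathcal{C}$ versus $\mathcal{L}'\cdot f\subseteq\mathcal{C}$) transfers correctly. This is where the identification of the $\mathcal{L}$-action on restrictions with the $\mathcal{L}'$-action, via $\mathscr{L}(X)\to\mathscr{L}(Y)\to\mathscr{L}(Z)$, does the real work, and it should be invoked explicitly.
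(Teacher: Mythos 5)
Your proposal is correct, and it reconstructs essentially the argument behind the paper's one-line proof, which simply cites \cite[Lemma 4.6]{Ardakov1}. You correctly identify the three components: (1) that $\mathcal{L}'$ inherits the structure of an $(R,\mathcal{B})$-Lie lattice (the paper takes this for granted in the paragraph preceding the lemma, noting that $\mathcal{B}\otimes_\mathcal{A}\mathcal{L}$ is an $(R,\mathcal{B})$-Lie algebra and hence its image in the $\pi$-torsionfree module $B\otimes_A L$ is a Lie lattice); (2) that $Y_{\ac}(\mathcal{L}')$ sits inside $X_{\ac}(\mathcal{L})$; and (3) that the two unit balls agree, so the completed sections coincide. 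Your observation in step (3) that the images of $U_{\mathcal{C}}(\mathcal{C}\otimes_{\mathcal{A}}\mathcal{L})$ and $U_{\mathcal{C}}(\mathcal{C}\otimes_{\mathcal{B}}\mathcal{L}')$ inside $U_C(\mathscr{L}(Z))$ are \emph{equal} (the surjection $\mathcal{C}\otimes_{\mathcal{A}}\mathcal{L}\twoheadrightarrow\mathcal{C}\otimes_{\mathcal{B}}\mathcal{L}'$ has $\pi$-torsion kernel, which dies in the torsionfree target, so functoriality of $U_{\mathcal{C}}(-)$ forces equality of images) is sharper than ``equivalent semi-norms'' and is the cleanest way to see the isomorphism as Banach algebras.

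One point that deserves a little more than the sentence ``the general affinoid case then follows by taking finite rational coverings'': when $Y$ is an $\mathcal{L}$-accessible \emph{affinoid} subdomain that is not itself rational, an $\mathcal{L}'$-accessible rational subdomain $W\subseteq Y$ need not be a rational subdomain of $X$, so your induction on accessibility steps (which inserts $Y$ or an intermediate $W_0$ into the chain $W\subseteq W_0\subseteq X$) does not literally establish that $W$ is an $\mathcal{L}$-accessible rational subdomain of $X$. What one shows instead is that $W$ is an $\mathcal{L}$-accessible \emph{affinoid} subdomain of $X$: it is $\mathcal{L}$-admissible by the action-factoring argument you give, and it is covered by the $W\cap X_j$ for the given $\mathcal{L}$-accessible rational covering $Y=\bigcup X_j$, which are $\mathcal{L}$-accessible rational subdomains of $X$ (this uses that $X_{\ac}(\mathcal{L})$ is a Grothendieck topology, so finite intersections of accessible rationals are accessible). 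You flagged this as the main obstacle, which is the right instinct; the resolution is the two-step structure of Definition \ref{generalac}.
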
 
\begin{proof}
This is exactly as in \cite[Lemma 4.6]{Ardakov1}.
\end{proof}

\begin{theorem}
\label{sheafDn}
$\mathscr{U}_n(\mathscr{L})$ is a sheaf on $X_{\ac}(\pi^n\mathcal{L})$ and has vanishing higher \v{C}ech cohomology with respect to any covering in $X_{\ac}(\pi^n\mathcal{L})$.
\end{theorem}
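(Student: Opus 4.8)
The plan is to prove the single statement that for every covering $\mathfrak{U}=\{U_i\}$ of every $\pi^n\mathcal{L}$-accessible affinoid subdomain $U\subseteq X$ (in particular of $X$ itself), the augmented \v{C}ech complex $\check{C}^\bullet(\mathfrak{U},\mathscr{U}_n(\mathscr{L}))$ is exact: its exactness at the first two terms is the sheaf axiom for $\mathfrak{U}$, and its exactness at the remaining terms is the vanishing of higher \v{C}ech cohomology. Using Lemma \ref{Dnrestrict} (compatibility of $\mathscr{U}_n(\mathscr{L})$ with restriction) one may first assume $U=X$, and then, using the reduction to elementary coverings available on $X_{\ac}(\pi^n\mathcal{L})$ — namely, that it suffices to treat the Laurent coverings $X=X(f)\cup X(f^{-1})$ with $\pi^n\mathcal{L}\cdot f$ contained in an affine formal model, exactly via the Gerritzen--Grauert-type d\'evissage of \cite{Ardakov1} — one reduces to showing exactness of $\check{C}^\bullet(\mathfrak{U},\mathscr{U}_n(\mathscr{L}))$ for such an elementary covering. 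Note that here $X(f)$, $X(f^{-1})$ and $X(f)\cap X(f^{-1})$ are all $\pi^n\mathcal{L}$-accessible rational subdomains, so that $\mathscr{U}_n(\mathscr{L})$ is defined on all of them.

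For such a covering, let $C^\bullet$ denote the augmented \v{C}ech complex of the structure sheaf $\mathcal{O}_X$, i.e. the short complex with terms $A$, $\mathcal{O}(X(f))\oplus\mathcal{O}(X(f^{-1}))$ and $\mathcal{O}(X(f)\cap X(f^{-1}))$. By Tate's acyclicity theorem $C^\bullet$ is exact, and as a bounded complex of Banach $A$-modules with continuous differentials it consists of strict morphisms by Lemma \ref{closedimagestrict} and the remark following it; moreover each of its terms is of the form $\mathcal{O}(Y)$ for a $\pi^n\mathcal{L}$-accessible rational subdomain $Y$, so we may equip the terms with norms of value set $|K^*|$ whose unit balls are $\pi^n\mathcal{L}$-stable affine formal models. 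By \cite[Proposition 2.3]{Ardakov1} and the functoriality of $\widehat{\otimes}$, the complex $\check{C}^\bullet(\mathfrak{U},\mathscr{U}_n(\mathscr{L}))$ is canonically isomorphic to $C^\bullet\widehat{\otimes}_A U_A(L)$, where $U_A(L)$ carries the norm with unit ball $U_n$. Now $U_A(L)$ is flat as a left $A$-module by Lemma \ref{envisflat} (since $L$ is finitely generated projective over $A$); taking $\mathcal{U}=U_{\mathcal{A}}(\pi^n\mathcal{L})$, which surjects onto $U_n$ and satisfies $\mathcal{U}\otimes_R K\cong U_A(L)$, Lemma \ref{torsionofsubdom} (applied with $\pi^n\mathcal{L}$ in place of $\mathcal{L}$) shows that $\Tor_s^{\mathcal{A}}(\mathcal{B}_Y,U_{\mathcal{A}}(\pi^n\mathcal{L}))$ has bounded $\pi$-torsion for every $s\geq 0$ and every term $Y$ of $C^\bullet$. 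Since $C^\bullet$ is exact, all of its cohomology vanishes, so the remaining hypotheses of the right $A$-module analogue of Theorem \ref{fullcomplexstrict} hold trivially, and we conclude that $C^\bullet\widehat{\otimes}_A U_A(L)$ is exact — this is precisely an application of Proposition \ref{Mainresultcompltensor}.(ii). Hence $\check{C}^\bullet(\mathfrak{U},\mathscr{U}_n(\mathscr{L}))$ is exact, and, via the reduction, the same holds for arbitrary coverings in $X_{\ac}(\pi^n\mathcal{L})$.

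Essentially all of the substance is already contained in the results of Section 2 and in Lemma \ref{torsionofsubdom}; what requires care — and is the main obstacle — is the bookkeeping. One must check that the reduction to elementary coverings on $X_{\ac}(\pi^n\mathcal{L})$ is compatible with the presheaf $\mathscr{U}_n(\mathscr{L})$, for which Lemma \ref{Dnrestrict} is the key input, and one must track left and right module conventions carefully so that $\check{C}^\bullet(\mathfrak{U},\mathscr{U}_n(\mathscr{L}))$ is genuinely of the form $C^\bullet\widehat{\otimes}_A U_A(L)$ with $U_A(L)$ a fixed flat \emph{left} $A$-module occupying the right tensor slot, so that the right $A$-module version of Theorem \ref{fullcomplexstrict} applies. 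An alternative to the reduction step, if one prefers to invoke Tate's theorem directly for an arbitrary $\pi^n\mathcal{L}$-accessible covering, is to first extend Lemma \ref{torsionofsubdom} from $\mathcal{L}$-accessible rational subdomains to arbitrary $\mathcal{L}$-accessible affinoid subdomains, which can be done by covering such a subdomain by accessible rationals and running a spectral-sequence (or iterated long exact sequence) argument with $\Tor^{\mathcal{A}}_\bullet(-,U_{\mathcal{A}}(\pi^n\mathcal{L}))$ together with Lemma \ref{sequenceoftorsion}.
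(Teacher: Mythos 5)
Your proposal identifies exactly the right core ingredients — Tate's acyclicity theorem, Lemma \ref{torsionofsubdom}, and Theorem \ref{fullcomplexstrict} (i.e., Proposition \ref{Mainresultcompltensor}.(ii)) — but you insert a Laurent-covering d\'evissage that the paper deliberately avoids, and this is where the two routes genuinely diverge. The paper's proof exploits the fact that Theorem \ref{fullcomplexstrict} handles \emph{arbitrary} bounded strict exact complexes, not just three-term ones: after using \cite[Lemma 4.3/2]{Bosch} to refine an arbitrary $\pi^n\mathcal{L}$-accessible covering to one by $\pi^n\mathcal{L}$-accessible \emph{rational} subdomains, the full augmented \v{C}ech complex of $\mathcal{O}$ is exact by Tate, each term is of the form $\mathcal{O}(Y)$ for $Y$ an accessible rational subdomain, so Lemma \ref{torsionofsubdom} applies to every term, and Theorem \ref{fullcomplexstrict} then gives strict exactness and exactness after completion in one stroke. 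No reduction to Laurent coverings is needed. In your route, once you have reduced to a simple Laurent covering you only have a three-term complex, for which the full strength of Theorem \ref{fullcomplexstrict} is overkill — but the reduction itself costs you the Gerritzen--Grauert d\'evissage inside the $X_{\ac}(\pi^n\mathcal{L})$ topology (BGR-style refinement and restriction lemmas, carried out level by level so that everything stays $\pi^n\mathcal{L}$-accessible), which you acknowledge but do not carry out. That bookkeeping is precisely the kind of thing the machinery of Section 2 was designed to sidestep. Your alternative suggestion at the end (extending Lemma \ref{torsionofsubdom} to arbitrary accessible affinoid subdomains) is likewise more than is needed: the paper only ever applies Lemma \ref{torsionofsubdom} to rational accessible subdomains, because the one-step rational refinement already suffices. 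So your outline would, with effort, yield a proof, but it is a more circuitous path that does not take full advantage of the generality of Theorem \ref{fullcomplexstrict}, which is the point of the paper's approach.
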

\begin{proof}
Let $U= \Sp B\in X_{\ac}(\pi^n\mathcal{L})$, and let $\mathfrak{V}=(V_i)_i$ be a (finite) $\pi^n\mathcal{L}$-accessible covering of $U$. By definition, each $V_i$ is the finite union of $\pi^n\mathcal{L}$-accessible rational subdomains, so using \cite[Lemma 4.3/2]{Bosch}, we can assume without loss of generality that each $V_i$ is rational and $\pi^n\mathcal{L}$-accessible.\\
By \cite[Corollary 8.2.1/5]{BGR}, the \v{C}ech complex
\begin{equation*}
0\to B\to \oplus_i \mathcal{O}_X(V_i)\to \oplus_{i, j} \mathcal{O}_X(V_{ij}) \to \dots
\end{equation*}
is exact. Equipping each term with a residue norm induced by a $\pi^n\mathcal{L}$-stable affine formal model (and equipping $A$ with the residue norm with unit ball $\mathcal{A}$) turns this into a complex of Banach $A$-modules with continuous boundary morphisms. The boundary morphisms are in fact strict by Lemma \ref{closedimagestrict}. \\
\\
We now wish to apply Theorem \ref{fullcomplexstrict} (tensoring on the right instead of the left). The \v{C}ech complex above is a strict complex of Banach modules over the Banach algebra $A$, with discrete value sets, and $U_A(L)$ is flat over $A$, since $L$ is projective (see Lemma \ref{envisflat}). Since the covering is finite, we have $\check{C}^j(\mathfrak{V}, \mathcal{O})=0$ for sufficiently large $j$.\\
Lemma \ref{torsionofsubdom} now implies that 
\begin{equation*}
\Tor^{\mathcal{A}}_s\left(\check{C}^j(\mathfrak{V}, \mathcal{O})^\circ, U_{\mathcal{A}}(\pi^n\mathcal{L})\right)
\end{equation*}
has bounded $\pi$-torsion for each $j$ and each $s\geq 0$.\\
\\
Thus all conditions of Theorem \ref{fullcomplexstrict} are satisfied, and the complex
\begin{equation*}
0 \to B\otimes_A U_A(L) \to \oplus \mathcal{O}_X(V_i)\otimes_A U_A(L)\to \dots
\end{equation*}
is strict exact and remains exact after completion with respect to the corresponding tensor semi-norms. 
\end{proof} 

Thus we have constructed a sheaf $\wideparen{\mathscr{U}(\mathscr{L})}$ on $X_{\rig}$ such that for any affinoid subdomain $V=\Sp B\subseteq X$, we have
\begin{equation*}
\wideparen{\mathscr{U}(\mathscr{L})}(V)=\varprojlim \widehat{U(\mathscr{L}(V))}
\end{equation*}
where the limit is taken over $n$, varying the norm on $U(\mathscr{L}(V))$ determined by the unit ball $U(\mathcal{B}\otimes \pi^n\mathcal{L})$ for some affine formal model $\mathcal{B}$.\\
Thus 
\begin{equation*}
\wideparen{\mathscr{U}(\mathscr{L})}(V)=\wideparen{U(\mathscr{L}(V))}
\end{equation*}
for any affinoid subdomain $V$.\\
Ardakov and Wadsley have constructed a sheaf satisfying this property for any affinoid subdomain $V$ which admits a \emph{smooth} Lie lattice, and since such subdomains form a base of the topology, it follows by uniqueness of extension (see \cite[9.1]{Ardakov1}) that our construction agrees with theirs.\\
\\
We can also relate the cohomology of $\wideparen{\mathscr{U}(\mathscr{L})}$ to the cohomologies of $\mathscr{U}_n(\mathscr{L})$. Fixing a finite affinoid covering $\mathfrak{V}$ of $X$, the terms $\check{C}^j(\mathfrak{V}, \mathscr{U}_n(\mathscr{L}))$ satisfy the Mittag-Leffler property as described in \cite[0.13.2.4]{EGA} for each $j$, so \cite[0.13.2.3]{EGA} implies that $\wideparen{\mathscr{U}(\mathscr{L})}$ also has vanishing higher \v{C}ech cohomology groups, as we obtain
\begin{equation*}
\check{\mathrm{H}}^j(\mathfrak{V}, \varprojlim \mathscr{U}_n(\mathscr{L}))\cong \varprojlim \check{\mathrm{H}}^j(\mathfrak{V}, \mathscr{U}_n(\mathscr{L}))=0
\end{equation*}
for any $j\geq 1$.\\
\\
Applying \cite[Tag 03F9]{Stacksproject}, we thus have
\begin{equation*}
\mathrm{H}^j(X_{\ac}(\pi^n\mathcal{L}), \mathscr{U}_n(\mathscr{L}))\cong \check{\mathrm{H}}^j(\mathfrak{V}, \mathscr{U}_n(\mathscr{L}))
\end{equation*}
and 
\begin{equation*}
\mathrm{H}^j(X, \wideparen{\mathscr{U}(\mathscr{L})})\cong \check{\mathrm{H}}^j(\mathfrak{V}, \wideparen{\mathscr{U}(\mathscr{L})})
\end{equation*}
for any $j\geq 0$, and both terms are zero for $j\geq 1$.\\
\\
Our main example for a Lie algebroid is the tangent sheaf $\mathcal{T}_X$ of $X$ in the case when $X$ is a smooth affinoid $K$-variety. In this case, we write $\wideparen{\mathcal{D}}_X=\wideparen{\mathscr{U}(\mathcal{T}_X)}$.\\
Together with Theorem \ref{envisFS}, the above establishes Theorem \ref{Mainresult} from the introduction, except for part (iii), which we will discuss below.\\
\\
Suppose $X$ is some rigid analytic $K$-variety (not necessarily affinoid) and $\mathscr{L}$ a Lie algebroid on $X$. Then we can glue our construction above to obtain a sheaf $\wideparen{\mathscr{U}(\mathscr{L})}$. If $X$ is moreover separated, \cite[Tag 03F7]{Stacksproject} implies that
\begin{equation*}
\mathrm{H}^j(X, \wideparen{\mathscr{U}(\mathscr{L})})\cong \check{\mathrm{H}}^j(\mathfrak{V}, \wideparen{\mathscr{U}(\mathscr{L})})
\end{equation*}
for any finite affinoid covering $\mathfrak{V}$ and any $j\geq 0$.

\subsection{Flat localization for $\mathscr{U}_n$}
Let $X=\Sp A$ be an affinoid $K$-space, $\mathcal{A}$ an affine formal model in $A$ and $\mathcal{L}$ an $(R, \mathcal{A})$-Lie lattice in $L=\mathscr{L}(X)$ for a Lie algebroid $\mathscr{L}$. \\
Let $Y$ be a $\pi^n\mathcal{L}$-accessible affinoid subdomain, and let $\mathscr{U}_n=\mathscr{U}_n(\mathscr{L})$ be the sheaf on $X_{\ac}(\pi^n\mathcal{L})$ as constructed earlier. We wish to prove the following result.
\begin{theorem}
\label{Dnflat}
$\mathscr{U}_n(Y)$ is a flat $\mathscr{U}_n(X)$-module on both sides.
\end{theorem}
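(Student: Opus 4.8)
The plan is to exploit the description $\mathscr{U}_n(Y)\cong B\widehat{\otimes}_A U_A(L)\cong B\widehat{\otimes}_A\mathscr{U}_n(X)$ (by \cite[Proposition 2.3]{Ardakov1}, together with the fact that the completed tensor product depends only on the completion $\mathscr{U}_n(X)=\widehat{U_A(L)}$ of $U_A(L)$), so that flatness of the restriction map $\mathscr{U}_n(X)\to\mathscr{U}_n(Y)$ becomes a statement about the exactness of the functor $B\widehat{\otimes}_A(-)$. This is exactly the situation of Section 2, with $U=B$, which is flat over $A$ because $\Sp B\hookrightarrow X$ is an affinoid subdomain; the bounded-$\pi$-torsion information of Lemma \ref{torsionofsubdom} will play the role that flatness of $U_{\mathcal{A}}(\mathcal{L})$ over $\mathcal{A}$ plays in \cite{Ardakov1} when a smooth Lie lattice is available. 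Since left and right modules are treated symmetrically via the right-handed versions of the results of Section 2, it suffices to prove left flatness; I fix a $\pi^n\mathcal{L}$-stable formal model $\mathcal{B}$ of $B$, so that $\mathscr{U}_n(X)$ has unit ball $\widehat{U_n}$ and $\mathscr{U}_n(Y)$ the residue norm with unit ball the image of $U_{\mathcal{B}}(\mathcal{B}\otimes_{\mathcal{A}}\pi^n\mathcal{L})$.

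As $\mathscr{U}_n(X)$ is a Noetherian Banach algebra, it suffices to prove $\Tor^{\mathscr{U}_n(X)}_s(\mathscr{U}_n(Y),N)=0$ for all $s\ge1$ and all finitely generated left $\mathscr{U}_n(X)$-modules $N$. Choose such an $N$, equipped with a norm whose unit ball $N^{\circ}$ is a finitely generated $\widehat{U_n}$-module, and a resolution $\cdots\to P_1\to P_0\to N\to 0$ by finite free $\mathscr{U}_n(X)$-modules. This is a bounded-above complex of Banach $\mathscr{U}_n(X)$-modules whose differentials are strict, the images being closed as kernels of the next maps (Lemma \ref{closedimagestrict}). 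Since the $P_j$ are finite free, $B\widehat{\otimes}_A P_j=\mathscr{U}_n(Y)^{\oplus r_j}=\mathscr{U}_n(Y)\otimes_{\mathscr{U}_n(X)}P_j$, so $B\widehat{\otimes}_A P_{\bullet}$ is precisely the complex computing $\Tor^{\mathscr{U}_n(X)}_{\bullet}(\mathscr{U}_n(Y),N)$. I would then apply Theorem \ref{fullcomplexstrict} to $C^{\bullet}=P_{\bullet}$ with $U=B$: the only non-formal hypotheses to verify are that $\Tor^{\mathcal{A}}_s(\mathcal{B},(P_j)^{\circ})$ and $\Tor^{\mathcal{A}}_s(\mathcal{B},N^{\circ})$ have bounded $\pi$-torsion for all $s$; granting this, the theorem gives $B\widehat{\otimes}_A\mathrm{H}^j(P_{\bullet})\cong\mathrm{H}^j(B\widehat{\otimes}_A P_{\bullet})$, and since $\mathrm{H}^j(P_{\bullet})=0$ for $j\ne 0$ this forces $\Tor^{\mathscr{U}_n(X)}_s(\mathscr{U}_n(Y),N)=0$ for $s\ge1$, as wanted. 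Because $(P_j)^{\circ}$ is a finite free $\widehat{U_n}$-module and $N^{\circ}$ is a finitely generated $\widehat{U_n}$-module, the Tor conditions reduce, through the long exact sequences of Tor and Lemma \ref{sequenceoftorsion}, to the single statement that $\Tor^{\mathcal{A}}_s(\mathcal{B},\widehat{U_n})$ has bounded $\pi$-torsion for every $s\ge0$.

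The main obstacle is establishing precisely this last statement, i.e. upgrading Lemma \ref{torsionofsubdom} --- which concerns the uncompleted algebra $U_{\mathcal{A}}(\pi^n\mathcal{L})$, and hence $U_n$ up to bounded $\pi$-torsion --- to the $\pi$-adic completion $\widehat{U_n}$, across which $\Tor$ does not commute and along which $\mathcal{B}$ fails to be module-finite over $\mathcal{A}$. I would handle this via the degree filtration: each graded piece $\gr_i U_n$ is a finitely generated $\mathcal{A}$-module, and $\gr\widehat{U_n}$ is the $\pi$-adic completion of the topologically finitely generated $\mathcal{A}$-algebra $\gr U_n$, itself a quotient of a Tate algebra $\mathcal{A}\langle\underline x\rangle$ which is flat over $\mathcal{A}$ by \cite[Remark 7.3/2]{Bosch}; one then transfers the bounded-$\pi$-torsion property from Lemma \ref{torsionofsubdom} along the filtered and graded spectral sequences, using that all the groups $\Tor^{\mathcal{A}}_s(\mathcal{B},-)$ in sight are $\pi$-torsion for $s\ge1$ (as $B$ is flat over $A$) and finitely generated over Noetherian $R$-algebras, hence automatically bounded by Lemma \ref{sequenceoftorsion}. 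The delicate point here is the non-exhaustiveness of the completed degree filtration on $\widehat{U_n}$, which must be dealt with by an Artin--Rees type comparison of the subspace and intrinsic $\pi$-adic topologies on the $\gr_i$ and their finite sums. This is exactly the place where the absence of a smooth Lie lattice is felt: for projective $\mathcal{L}$ all of these Tor groups vanish outright, and one may invoke Proposition \ref{Mainresultcompltensor}.(i) (equivalently Corollary \ref{flatunitballcomplex}) in place of the full strength of Theorem \ref{fullcomplexstrict}.
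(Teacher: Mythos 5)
Your approach is genuinely different from the paper's, and the comparison is instructive, but your proposal has gaps that I don't think your sketched strategy closes.

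The paper proves flatness by the route of \cite{Ardakov1}: first for Laurent domains $X_i = X(f^{\pm 1})$ that are $\mathcal{L}$-accessible in one step, via Lemma \ref{sesforloc}, which produces a short exact sequence of right $\widehat{U(\mathcal{L}_i)}_K$-modules
\begin{equation*}
0 \to \widehat{U(\mathcal{L}_i)}_K \xrightarrow{\,u_i\cdot\,} \widehat{U(\mathcal{L}_i)}_K \to \mathscr{U}_0(X_i) \to 0,
\end{equation*}
proved by applying Lemma \ref{sesstrictiff} to the \emph{uncompleted} sequence $0\to U(L_i)\to U(L_i)\to C_i\otimes U(L)\to 0$, where the bounded-$\pi$-torsion input is just Noetherianity of $U_{\mathcal{C}_i}(\mathcal{C}_i\otimes\mathcal{L})$, and then completing via Proposition \ref{strictcompletionexact}; flatness then follows from \cite[Lemma 4.3.b), Theorem 4.5]{Ardakov1}, and the induction on the number of accessibility steps and the passage to general accessible affinoid subdomains is carried over from \cite[Theorem 4.9.a)]{Ardakov1}. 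You instead try to prove Tor-vanishing directly by applying Theorem \ref{fullcomplexstrict} with $U=B$ to a free resolution over $\mathscr{U}_n(X)$. The reduction to $B\widehat{\otimes}_A(-)$ and the identification $B\widehat{\otimes}_A P_j\cong\mathscr{U}_n(Y)\otimes_{\mathscr{U}_n(X)}P_j$ are fine. The key advantage of the paper's route is precisely that it never needs Tor-boundedness against the \emph{$\pi$-adic completion} $\widehat{U_n}$: all the $\pi$-torsion control is done on the uncompleted ring, and completion is only applied at the end to a strict sequence.

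There are two concrete gaps in your argument. First, you claim that the hypotheses of Theorem \ref{fullcomplexstrict} for $(P_j)^\circ$ and $N^\circ$ all reduce, via long exact sequences and Lemma \ref{sequenceoftorsion}, to the single statement that $\Tor^{\mathcal{A}}_s(\mathcal{B},\widehat{U_n})$ has bounded $\pi$-torsion. The reduction for $(P_j)^\circ\cong\widehat{U_n}^{\oplus r_j}$ is immediate, but the reduction for $N^\circ$ is not: a short exact sequence $0\to Z\to \widehat{U_n}^{\oplus b}\to N^\circ\to 0$ yields a Tor long exact sequence in which the terms $T_s(Z)$ are just as unknown as $T_s(N^\circ)$, and Lemma \ref{sequenceoftorsion} requires knowledge of two of the three adjacent terms, not one. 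Without a descending induction (which would require some finite Tor-dimension statement that is not available here) this step is unjustified. Second, the obstacle you correctly identify --- upgrading Lemma \ref{torsionofsubdom} from $U_{\mathcal{A}}(\pi^n\mathcal{L})$ to $\widehat{U_n}$ --- is not resolved by your filtration sketch: the degree filtration on $\widehat{U_n}$ is not exhaustive, so the associated spectral sequence does not converge without further input, and you acknowledge this yourself without resolving it. There is actually a cleaner way to handle this particular point, which you do not use: $\widehat{U_n}$ is flat over $U_n$ by \cite[3.2.3.(iv)]{Berthelot} (as already invoked in Lemma \ref{fgfrcompletion}), so the flat base change isomorphism $\Tor^{\mathcal{A}}_s(\mathcal{B},\widehat{U_n})\cong \Tor^{\mathcal{A}}_s(\mathcal{B},U_n)\otimes_{U_n}\widehat{U_n}$ transfers bounded $\pi$-torsion directly from Lemma \ref{torsionofsubdom}; for $s\ge1$ the right side is annihilated by the same power of $\pi$, and for $s=0$ one splits off the torsion submodule and uses flatness again. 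But even with this fix, the $T_s(N^\circ)$ issue remains. Finally, Lemma \ref{torsionofsubdom} is stated only for $\mathcal{L}$-accessible \emph{rational} subdomains; your argument as written would need an additional step to cover arbitrary $\mathcal{L}$-accessible affinoid $Y$, whereas the paper handles that case by the argument of \cite[Theorem 4.9.a)]{Ardakov1}.
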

As usual, replacing $\mathcal{L}$ by the Lie lattice $\pi^n\mathcal{L}$, it is sufficient to prove the statement in the case $n=0$.\\
\\
This theorem was proved by Ardakov and Wadsley (see \cite[Theorem 4.9]{Ardakov1}) under the additional assumption that $\mathcal{L}$ is smooth (i.e. projective over $\mathcal{A}$). Our proof will be almost entirely as in \cite{Ardakov1}, but we will need to generalize the proof of \cite[Proposition 4.3.c)]{Ardakov1}. The original argument can be interpreted as an instance of Corollary \ref{flatunitballcomplex}, which we are going to replace by the more general Theorem \ref{fullcomplexstrict} in order to remove the smoothness assumption.\\ 
For large parts, however, we will be able to refer to \cite{Ardakov1}.\\
\\
Again, we will first prove the result for $\mathcal{L}$-accessible rational subdomains in one step, then inductively for general $\mathcal{L}$-accessible rational subdomains, and finally for arbitrary $Y\in X_{\ac}(\mathcal{L})$.\\
\\
Let $f\in A$ be non-zero, and write again
\begin{equation*}
X_1=X(f)=\Sp C_1, \ X_2=X(f^{-1})=\Sp C_2.
\end{equation*}
Assume $\mathcal{L}\cdot f\subseteq \mathcal{A}$, so that $X_i$ is $\mathcal{L}$-accessible for $i=1, 2$.\\
As before, choose $a\in \mathbb{N}$ such that $\pi^af\in \mathcal{A}$, and set
\begin{equation*}
u_1=\pi^af-\pi^at\in \mathcal{A}\langle t\rangle, \ u_2=\pi^aft-\pi^a\in \mathcal{A}\langle t\rangle.
\end{equation*}
Recall the notation from before, with $\mathcal{C}_i=\mathcal{A}\langle t\rangle/u_i\mathcal{A}\langle t \rangle$, and $\overline{\mathcal{C}_i}=\mathcal{C}_i/\pi\mathrm{-tor}(\mathcal{C}_i)$ an $\mathcal{L}$-stable affine formal model of $C_i$.\\
We write $\mathcal{L}_i=\mathcal{A}\langle t\rangle \otimes_{\mathcal{A}} \mathcal{L}$ for the $(R, \mathcal{A}\langle t\rangle)$-Lie algebra with anchor map $1\otimes \sigma_i$ as defined in Proposition \ref{twolifts}, and $L_i=\mathcal{L}_i\otimes_R K$.

\begin{lemma}[{see \cite[Proposition 4.3.c)]{Ardakov1}}]
\label{sesforloc}
There exists a short exact sequence
\begin{equation*}
\begin{xy}
\xymatrix{
0 \ar[r] & \widehat{U(\mathcal{L}_i)}_K \ar[r]^{u_i\cdot} & \widehat{U(\mathcal{L}_i)}_K \ar[r] & \mathscr{U}_0(X_i) \ar[r] & 0
}
\end{xy}
\end{equation*}
of right $\widehat{U(\mathcal{L}_i)}_K$-modules, analogously for the left module structure.
\end{lemma}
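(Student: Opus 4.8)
\emph{Plan.} The idea is to exhibit the desired sequence as the image, under a completed tensor product, of a two-term complex of affinoid algebras, so that Theorem \ref{fullcomplexstrict} takes over the role played by Corollary \ref{flatunitballcomplex} (equivalently, by the flatness of a smooth Lie lattice's enveloping algebra) in \cite[Proposition 4.3.c)]{Ardakov1}. Let $C^\bullet$ be the cochain complex of Banach $A$-modules concentrated in degrees $0$ and $1$, with $C^0=C^1=A\langle t\rangle$, unit ball $\mathcal{A}\langle t\rangle$, and differential left multiplication by $u_i$. Since $\pi^a$ is a unit of $A\langle t\rangle$, the element $u_i$ differs by a unit from a degree-one distinguished polynomial (for $X_1$) resp.\ from $ft-1$ (for $X_2$), and in either case one checks directly that this is a non-zero-divisor; hence $\mathrm{H}^0(C^\bullet)=0$ and $\mathrm{H}^1(C^\bullet)=A\langle t\rangle/u_iA\langle t\rangle=C_i$, carrying the quotient norm whose unit ball is the image of $\mathcal{A}\langle t\rangle$, namely $\overline{\mathcal{C}_i}$. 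The differential is strict by Lemma \ref{closedimagestrict}, since its image $u_iA\langle t\rangle=\ker(A\langle t\rangle\to C_i)$ is closed; after rescaling we may assume every term has value set $|K^*|$.

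Next I would invoke the right-module analogue of Theorem \ref{fullcomplexstrict} with $U=U_A(L)$ — flat over $A$ by Lemma \ref{envisflat}, since $L$ is projective — and integral model $\mathcal{U}=U_{\mathcal{A}}(\mathcal{L})$. Writing $T_s(-)=\Tor_s^{\mathcal{A}}(-,U_{\mathcal{A}}(\mathcal{L}))$, the hypotheses read: (a) $T_s(\mathcal{A}\langle t\rangle)$ has bounded $\pi$-torsion, which is clear because $\mathcal{A}\langle t\rangle$ is flat over $\mathcal{A}$, so $T_s$ vanishes for $s\geq 1$ while $T_0=\mathcal{A}\langle t\rangle\otimes_{\mathcal{A}}U_{\mathcal{A}}(\mathcal{L})$ is Noetherian; (b) $T_s(\overline{\mathcal{C}_i})$ has bounded $\pi$-torsion, which is exactly Lemma \ref{torsionofsubdom} applied to $X_i$ (which is $\mathcal{L}$-accessible in one step) and its $\mathcal{L}$-stable affine formal model $\overline{\mathcal{C}_i}$; while $\mathrm{H}^0(C^\bullet)=0$ makes one condition trivial and the conditions for large $j$ are vacuous as $C^\bullet$ is bounded. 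Theorem \ref{fullcomplexstrict} then yields that $C^\bullet\widehat{\otimes}_A U_A(L)$ has strict differentials and that $\mathrm{H}^j(C^\bullet)\widehat{\otimes}_A U_A(L)\xrightarrow{\ \sim\ }\mathrm{H}^j(C^\bullet\widehat{\otimes}_A U_A(L))$ for all $j$.

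It then remains to identify the terms. By Lemma \ref{tensorunitball} the tensor-seminorm unit ball of $A\langle t\rangle\otimes_A U_A(L)$ is the image of $\mathcal{A}\langle t\rangle\otimes_{\mathcal{A}}U_{\mathcal{A}}(\mathcal{L})$, which is the same $\mathcal{A}\langle t\rangle$-subalgebra of $A\langle t\rangle\otimes_A U_A(L)\cong U_{A\langle t\rangle}(L_i)$ (using \cite[Proposition 2.3]{Ardakov1}) as the image of $U(\mathcal{L}_i)$; hence $A\langle t\rangle\widehat{\otimes}_A U_A(L)\cong\widehat{U(\mathcal{L}_i)}_K$, under which the differential $u_i\cdot\otimes\mathrm{id}$ becomes left multiplication by $u_i$. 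Likewise, Lemma \ref{tensorunitball} and \cite[Proposition 2.3]{Ardakov1} identify $C_i\widehat{\otimes}_A U_A(L)$ with the completion of $U_{C_i}(C_i\otimes_A L)$ at the lattice generated over $\overline{\mathcal{C}_i}$ by the image of $\mathcal{L}$, i.e.\ at the image of $U_{\overline{\mathcal{C}_i}}(\overline{\mathcal{C}_i}\otimes_{\mathcal{A}}\mathcal{L})$ — which is precisely the unit ball defining $\mathscr{U}_0(X_i)$ — so $C_i\widehat{\otimes}_A U_A(L)\cong\mathscr{U}_0(X_i)$. Substituting these into the previous paragraph, $\mathrm{H}^0=0$ gives injectivity of $u_i\cdot$ on $\widehat{U(\mathcal{L}_i)}_K$ and $\mathrm{H}^1\cong\mathscr{U}_0(X_i)$ identifies the cokernel, producing the short exact sequence. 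It is one of right $\widehat{U(\mathcal{L}_i)}_K$-modules because it is the completion of the algebraic short exact sequence $0\to U_{A\langle t\rangle}(L_i)\xrightarrow{u_i\cdot}U_{A\langle t\rangle}(L_i)\to U_{A\langle t\rangle}(L_i)/u_iU_{A\langle t\rangle}(L_i)\to 0$ of right $U_{A\langle t\rangle}(L_i)$-modules: indeed $u_i$ is a normal element, since $\sigma_1(x)(u_1)=0$ and $\sigma_2(x)(u_2)=-(x\cdot f)t\,u_2\in u_2A\langle t\rangle$ for $x\in L$ (using $\mathcal{L}\cdot f\subseteq\mathcal{A}$), so $u_iU_{A\langle t\rangle}(L_i)=U_{A\langle t\rangle}(L_i)u_i$ is a two-sided ideal and left multiplication by $u_i$ is a morphism of right modules. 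The left-module statement is symmetric.

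\emph{Main obstacle.} There is no single hard step, because the genuine novelty relative to \cite{Ardakov1} — dropping smoothness of $\mathcal{L}$ — has been packaged into Theorem \ref{fullcomplexstrict} (the bounded-$\pi$-torsion replacement for the flat case) and into Lemma \ref{torsionofsubdom}, which supplies exactly the $\Tor$-bound needed in hypothesis (b) above. What requires care is the normed-module bookkeeping of the third paragraph, matching $A\langle t\rangle\widehat{\otimes}_A U_A(L)$ and $C_i\widehat{\otimes}_A U_A(L)$ with $\widehat{U(\mathcal{L}_i)}_K$ and $\mathscr{U}_0(X_i)$ via Lemma \ref{tensorunitball} and \cite[Proposition 2.3]{Ardakov1}, together with the elementary checks that $u_i$ is a non-zero-divisor and a normal element of the relevant enveloping algebras so that the algebraic sequence is exact and consists of module maps on both sides.
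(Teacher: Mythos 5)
Your proof is correct, but it takes a more heavily-engineered route than the paper does. The paper applies Lemma \ref{sesstrictiff} directly to the strict short exact sequence $0\to A\langle t\rangle\xrightarrow{u_i\cdot}A\langle t\rangle\to C_i\to 0$ and the flat $A$-module $U_A(L)$; the only non-trivial input is that the model tensor $\overline{\mathcal{C}_i}\otimes_{\mathcal{A}}U_{\mathcal{A}}(\mathcal{L})$ has bounded $\pi$-torsion, and this is read off immediately from Noetherianity of $U_{\overline{\mathcal{C}_i}}(\overline{\mathcal{C}_i}\otimes\mathcal{L})$ — no higher Tor groups and no appeal to Lemma \ref{torsionofsubdom} are needed. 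You instead repackage the same data as a two-term cochain complex and invoke Theorem \ref{fullcomplexstrict}, which forces you to verify bounded $\pi$-torsion of $T_s$ for all $s\geq 0$, for which you correctly call on Lemma \ref{torsionofsubdom}. This is sound (and the hypotheses for large $j$ are indeed vacuous since the complex is bounded), but it is more than is necessary: for a short exact sequence Theorem \ref{fullcomplexstrict} collapses back to an application of Lemma \ref{sesstrictiff} with only the $s=0$ condition mattering. On the other hand, your approach mirrors the one the paper does use for the \v{C}ech complex in Theorem \ref{sheafDn}, so it is a natural unification. Two further remarks: your identification of the tensor-seminorm unit balls via Lemma \ref{tensorunitball} and \cite[Proposition 2.3]{Ardakov1} matches the paper's, and your explicit verification that $u_i$ is normal in $U_{A\langle t\rangle}(L_i)$ (using $\sigma_1(x)(u_1)=0$ and $\sigma_2(x)(u_2)\in u_2A\langle t\rangle$) usefully makes explicit the ``analogously for the left module structure'' claim that the paper passes over without comment.
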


\begin{proof}
By definition, the sequence
\begin{equation*}
\begin{xy}
\xymatrix{
0 \ar[r]& A\langle t\rangle \ar[r]^{u_i\cdot} &  A\langle t \rangle \ar[r]^p &  C_i \ar[r] & 0
}
\end{xy}
\end{equation*}
is exact. \\
Equip $A\langle t\rangle$ with a residue norm with unit ball $\mathcal{A}\langle t \rangle$, and $C_i$ with a residue norm with unit ball $\overline{\mathcal{C}_i}$. Since the maps are continuous, Lemma \ref{closedimagestrict} implies that this short exact sequence consists of strict morphisms.\\
\\
Since $L$ is a projective $A$-module, we know that $U_A(L)$ is a flat $A$-module by Lemma \ref{envisflat}. Thus
\begin{equation*}
0 \to U(L_i)\to U(L_i) \to U_{C_i}(C_i\otimes L)\to 0
\end{equation*} 
is a short exact sequence of right $U(L_i)$-modules, where we have used the isomorphism (from \cite[Proposition 2.3]{Ardakov1})
\begin{equation*}
C_i\otimes_A U_A(L)\cong U_{C_i}(C_i\otimes_A L),
\end{equation*}
likewise for the other terms.\\
The corresponding tensor semi-norms on the first two terms have as unit balls the images of
\begin{equation*}
\mathcal{A}\langle t\rangle \otimes_{\mathcal{A}} U_{\mathcal{A}}(\mathcal{L})\cong U_{\mathcal{A}\langle t\rangle}(\mathcal{L}_i),
\end{equation*} 
inside $U(L_i)$. Similarly the unit ball of the tensor semi-norm on $U_{C_i}(C_i\otimes L)\cong C_i\otimes U(L)$ is the image of $\mathcal{C}_i\otimes_{\mathcal{A}} U_{\mathcal{A}}(\mathcal{L})\cong U_{\mathcal{C}_i}(\mathcal{C}_i\otimes \mathcal{L})$. In particular, its completion is $\mathscr{U}_0(X_i)$.\\
\\
But now $U_{\mathcal{C}_i}(\mathcal{C}_i\otimes \mathcal{L})$ is Noetherian, so has bounded $\pi$-torsion, and we can invoke Lemma \ref{sesstrictiff} to see that the completion 
\begin{equation*}
0 \to \widehat{U(L_i)}\to \widehat{U(L_i)}\to C_i\widehat{\otimes}_A U_A(L)\to 0
\end{equation*} 
is exact. Using \cite[Lemma 2.6]{Ardakov1} again, $\widehat{U(L_i)}\cong \widehat{U(\mathcal{L}_i)}_K$, and by the above,
\begin{equation*}
C_i\widehat{\otimes}_A U_A(L)\cong \mathscr{U}_0(X_i),
\end{equation*}
as required.
\end{proof}

\begin{proof}[Proof of Theorem \ref{Dnflat}]
First assume that $Y=X_i$, $i=1, 2$, as in the above discussion. \\
By \cite[Lemma 4.3.b)]{Ardakov1}, $T_i=\widehat{U(\mathcal{L}_i)}_K$ is a flat right $\mathscr{U}_0(X)$-module, and by Lemma \ref{sesforloc}, we have $\mathscr{U}_0(X_i)\cong T_i/u_iT_i$ as a right $\widehat{U(\mathcal{L}_i)}_K$-module. Now the proof of \cite[Theorem 4.5]{Ardakov1} goes through unchanged. Thus we have proved the theorem for the case when $Y$ is a rational subdomain which is $\mathcal{L}$-accessible in one step.\\
\\
Now suppose $Y$ is a rational subdomain which is $\mathcal{L}$-accessible in $n$ steps, and let $Y\subseteq Z\subseteq X$ be as in Definition \ref{rationalac}, i.e. $Y=Z(f)$ or $Z(f^{-1})$ for some non-zero $f\in \mathcal{O}(Z)$. We can assume inductively that $\mathscr{U}_0(Z)$ is flat over $\mathscr{U}_0(X)$. \\
Write $Z=\Sp B$, and let $\mathcal{B}$ be an $\mathcal{L}$-stable affine formal model of $B$ such that $\mathcal{L}\cdot f\subseteq \mathcal{B}$. Then $L'=\mathcal{T}_X(Z)=B\otimes_A L$, and the image of $\mathcal{B}\otimes_{\mathcal{A}} \mathcal{L}$ in $L'$, which we will denote by $\mathcal{L}'$, is an $(R, \mathcal{B})$-Lie lattice in $L'$.\\
\\
By the argument above, $\mathscr{U}_{Z, \mathcal{L}'}(Y)$ is flat over $\mathscr{U}_{Z, \mathcal{L}'}(Z)$, and by Lemma \ref{Dnrestrict}, this says that $\mathscr{U}_0(Y)$ is flat over $\mathscr{U}_0(Z)$. Since we assumed that $\mathscr{U}_0(Z)$ is flat over $\mathscr{U}_0(X)$, this shows $\mathscr{U}_0(Y)$ is flat over $\mathscr{U}_0(X)$.\\
\\
For the case of a general $\mathcal{L}$-accessible affinoid subdomain $Y$, the argument of \cite[Theorem 4.9.a)]{Ardakov1} now goes through unchanged.
\end{proof}

\subsection{Coadmissible $\wideparen{\mathscr{U}(\mathscr{L})}$-modules}
Finally, we define coadmissible modules, which are the analogues of coherent modules for Fr\'echet--Stein algebras.
\begin{definition}[{see \cite[section 3]{Schneider03}}]
\label{defcoad}
A (left) module $M$ of a (left) Fr\'echet--Stein algebra $U=\varprojlim U_n$ is called (left) \textbf{coadmissible} if $M=\varprojlim M_n$, such that the following is satisfied for every $n$:
\begin{enumerate}[(i)]
\item $M_n$ is a finitely generated (left) $U_n$-module.
\item The natural morphism $U_n\otimes_{U_{n+1}}M_{n+1}\to M_n$ is an isomorphism.
\end{enumerate}
\end{definition}

We record the following basic results from \cite[section 3]{Schneider03}.
\begin{proposition}
\label{coadproperties}
Let $M=\varprojlim M_n$ be a coadmissible $U=\varprojlim U_n$-module. Then the following hold:
\begin{enumerate}[(i)]
\item The natural morphism $U_n\otimes_U M\to M_n$ is an isomorphism for each $n$.
\item The system $(M_n)_n$ has the Mittag-Leffler property as described in \cite[0.13.2.4]{EGA}.
\item The category of coadmissible $U$-modules is an abelian category, containing all finitely presented $U$-modules.
\end{enumerate}
\end{proposition}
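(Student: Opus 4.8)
The plan is to deduce all three parts from the structure theory of Schneider--Teitelbaum \cite[\S 3]{Schneider03} (which is why the paper only \emph{records} them), so what I would really do is isolate the steps and the single non-formal ingredient.

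For (ii): I would first equip each $M_n$ with its canonical Banach topology as a finitely generated module over the Noetherian Banach algebra $U_n$ --- independent of the chosen finite presentation because finitely generated submodules of $U_n^k$ are closed, so any surjection $U_n^k\to M_n$ is strict by the open mapping theorem. Then the transition map $M_{n+1}\to M_n$ has dense image: images of generators of $M_{n+1}$ over $U_{n+1}$ generate $M_n$ over $U_n$ by Definition~\ref{defcoad}(ii), so the resulting surjection $U_n^k\to M_n$ is open and carries the dense subspace $U_{n+1}^k$ (dense by Definition~\ref{defineFS}(ii)) onto the image of $M_{n+1}$. A projective system of Banach spaces with dense-image transition maps is Mittag--Leffler in the sense of \cite[0.13.2.4]{EGA} (the closures of all the images are everything), which is exactly (ii); the same argument applies to any coadmissible system, a fact used below.

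The crux is the flatness of $U_n$ over $U$ on both sides. For this one reduces, for finitely generated left ideals $\mathfrak a=Ua_1+\dots+Ua_k$, to $\Tor_1^U(U_n,U/\mathfrak a)=0$: writing $\mathfrak a_m$ for the ideal of $U_m$ generated by the $a_i$, the flatness of $U_n$ over $U_m$ (from Definition~\ref{defineFS}(i) and transitivity of flatness) makes $(\mathfrak a_m)$ and $(U_m/\mathfrak a_m)$ coadmissible systems, and the Mittag--Leffler step lets one pass to $\varprojlim$ in the associated short exact sequences via \cite[0.13.2.3]{EGA}; comparing with $0\to\mathfrak a\to U\to U/\mathfrak a\to 0$ and using the density of $U$ in each $U_m$ then forces the Tor group to vanish. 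This is the one place where Noetherianity of the $U_n$ and the Mittag--Leffler property genuinely have to be combined, so it is the main obstacle; I would present the argument as \cite[\S 3]{Schneider03}.

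Granting flatness, (i) and (iii) are formal. For (i), $(U_n\otimes_U M)_n$ is a coadmissible system with natural maps to $(M_n)_n$, and passing to $\varprojlim$ (using the Mittag--Leffler step and the density of $M$ in each $M_n$) identifies $\varprojlim(U_n\otimes_U M)$ with $M$, hence $U_n\otimes_U M$ with $M_n$ level by level. For (iii), flatness of $U_n$ over $U$ lets one compute $\ker$, $\coker$, $\im$ and $\coim$ of a morphism $f\colon M\to N$ levelwise, where $U_n$ is Noetherian Banach; so $(\ker f_n)$ and $(\coker f_n)$ are coadmissible systems, $\ker f=\varprojlim\ker f_n$ and $\coker f=\varprojlim\coker f_n$ are coadmissible, $\coim f\cong\im f$ because this holds at each level, and finite direct sums are trivially coadmissible, giving an abelian category; applying $U_n\otimes_U-$ to a finite presentation $U^a\to U^b\to P\to 0$ exhibits any finitely presented $P$ as $\varprojlim U_n\otimes_U P$ with $(U_n\otimes_U P)_n$ coadmissible.
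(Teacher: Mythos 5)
The paper does not prove this proposition: it simply records the statements as ``basic results from \cite[\S 3]{Schneider03}''. Your sketch correctly identifies and follows the arguments in that reference --- dense transition maps of Banach modules give the topological Mittag--Leffler condition, flatness of $U_n$ over $U$ is the one non-formal ingredient (proved by comparing the ideal systems $(\mathfrak a_m)$ with $\mathfrak a$ via $\varprojlim$), and (i), (iii) then follow formally --- so your approach matches the paper's (which is to cite Schneider--Teitelbaum) in content; you simply unpack what the citation contains.
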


Just as a Fr\'echet--Stein algebra $B=\varprojlim B_n$ carries a natural Fr\'echet topology as the inverse limit topology of the Banach norms on $B_n$, so any coadmissible $B$-module $M=\varprojlim M_n$ carries a canonical Fr\'echet topology induced by the canonical Banach module structures on each $M_n$.\\

Given a $(K, A)$-Lie algebra $L$ which is finitely generated projective over $A$, finitely generated $U_A(L)$-modules give rise to coadmissible $\wideparen{U_A(L)}$-modules in a natural way as follows.\\
Choose an $(R, \mathcal{A})$-Lie lattice $\mathcal{L}$ in $L$ and write $U_n=U_{\mathcal{A}}(\pi^n\mathcal{L})$. Given a finitely generated $U_A(L)$-module $M$, we obtain the coadmissible module 
\begin{equation*}
\wideparen{M}:=\varprojlim \left(\widehat{U_n}_K\otimes_{U(L)} M\right),
\end{equation*}
which we might call the coadmissible completion of $M$, similarly to \cite[7.1]{Ardakov1}.  
\begin{lemma}
\label{fgfrcompletion}
The functor $M\mapsto \wideparen{M}$ is an exact functor from finitely generated $U_A(L)$-modules to coadmissible $\wideparen{U_A(L)}$-modules.
\end{lemma}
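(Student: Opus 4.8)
The plan is to prove the two claims of Lemma \ref{fgfrcompletion} separately: first that $M\mapsto\wideparen{M}$ genuinely lands in coadmissible $\wideparen{U_A(L)}$-modules, and then that it is exact. For the first claim, write $U_n=U_{\mathcal{A}}(\pi^n\mathcal{L})$ and $\wideparen{M}_n=\widehat{U_n}_K\otimes_{U(L)}M$. Since $M$ is finitely generated over the Noetherian ring $U_A(L)$, each $\wideparen{M}_n$ is finitely generated over the Noetherian Banach algebra $\widehat{U_n}_K$, giving condition (i) of Definition \ref{defcoad}. For condition (ii), I would use that $\widehat{U_n}_K$ is flat over $\widehat{U_{n+1}}_K$ (this is exactly the Fr\'echet--Stein structure established in Theorem \ref{envisFS}, in the form noted at the end of Section 3 that \emph{any} Lie lattice exhibits it for $n$ large): picking a finite presentation $U(L)^a\to U(L)^b\to M\to 0$ of $M$, base-changing along $U(L)\to\widehat{U_n}_K$ and using that tensor product is right exact, $\wideparen{M}_n$ is the cokernel of $\widehat{U_n}_K^a\to\widehat{U_n}_K^b$, and then $\widehat{U_n}_K\otimes_{\widehat{U_{n+1}}_K}\wideparen{M}_{n+1}\cong\wideparen{M}_n$ follows from right-exactness and the free case $\widehat{U_n}_K\otimes_{\widehat{U_{n+1}}_K}\widehat{U_{n+1}}_K^b\cong\widehat{U_n}_K^b$. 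Hence $\wideparen{M}=\varprojlim\wideparen{M}_n$ is coadmissible.

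For exactness, take a short exact sequence $0\to M'\to M\to M''\to 0$ of finitely generated $U_A(L)$-modules. Applying $\widehat{U_n}_K\otimes_{U(L)}-$ gives a right-exact sequence, so it remains to check left-exactness at the level of $\wideparen{M}'_n\to\wideparen{M}_n$, and then to pass to the inverse limit. For the level-$n$ statement I would invoke flatness of $\widehat{U_n}_K$ over $U_A(L)$. This flatness is precisely the kind of statement obtained from Emerton's criterion in Theorem \ref{envisFS} (it is the $U_{n}\to U_{0}$ case of that argument, or more directly follows because $\widehat{U_n}_K$ is, up to rescaling the lattice, one of the Fr\'echet--Stein pieces and the connecting maps $\widehat{U_{m+1}}_K\to\widehat{U_m}_K$ are flat, while $U_A(L)$ is the dense common subring). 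Granting that each $\widehat{U_n}_K$ is flat over $U_A(L)$, the sequences $0\to\wideparen{M}'_n\to\wideparen{M}_n\to\wideparen{M}''_n\to 0$ are exact. Finally, $\varprojlim$ is left-exact in general, and since $(\wideparen{M}'_n)_n$ satisfies the Mittag--Leffler property by Proposition \ref{coadproperties}.(ii) (coadmissibility of $\wideparen{M}'$ from the first part), $\varprojlim^1$ of this system vanishes, so applying $\varprojlim$ to the level-$n$ short exact sequences preserves exactness on the right as well. This yields $0\to\wideparen{M'}\to\wideparen{M}\to\wideparen{M''}\to 0$ exact.

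The main obstacle I anticipate is pinning down cleanly that $\widehat{U_n}_K$ is flat over $U_A(L)$ (equivalently over $\widehat{U_0}_K=\widehat{U}_K$ together with flatness of $\widehat{U}_K$ over $U_A(L)$). The excerpt proves flatness of the connecting maps $\widehat{U_{n+1}}_K\to\widehat{U_n}_K$ via Emerton's Proposition 5.3.10, but the statement needed here compares a completion with the dense uncompleted algebra $U_A(L)$; one should check that the same filtration argument, or the standard fact that a Noetherian Banach completion is flat over a dense Noetherian subalgebra under the appropriate conditions, applies. Alternatively, since $M$ is finitely \emph{presented}, one can sidestep abstract flatness and argue directly with presentations as sketched above, reducing exactness of $M\mapsto\wideparen{M}$ to the statement that for a surjection $U(L)^b\twoheadrightarrow M$ with kernel $N$ (automatically finitely generated, by Noetherianity) one has $\wideparen{N}=\ker(\widehat{U_n}_K^b\to\wideparen{M}_n)$ compatibly in $n$ — which again is exactly flatness of $\widehat{U_n}_K$ over $U(L)$, but now only needs to be verified for maps between finite free modules, where it is more transparent. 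Everything else (Noetherianity inputs, Mittag--Leffler, right-exactness of tensor) is routine given the results already established.
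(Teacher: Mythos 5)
Your overall architecture is sound and mirrors the paper's: establish flatness of each $\widehat{U_n}_K$ over $U_A(L)$ to get exactness at every finite level, then pass to the inverse limit using the Mittag--Leffler property supplied by coadmissibility. But you correctly flag the central ingredient --- flatness of $\widehat{U_n}_K$ over $U_A(L)$ --- as your ``main obstacle'' and then do not actually close it. None of the routes you sketch quite works as stated. Emerton's criterion, as deployed in the proof of Theorem \ref{envisFS}, gives flatness of the \emph{connecting} maps $\widehat{U_{n+1}}_K\to\widehat{U_n}_K$ between completed pieces, not flatness of a completion over the dense uncompleted algebra $U_A(L)$. And the ``sidestep via finite presentations'' you propose doesn't actually sidestep anything: checking exactness on maps of finite free modules is just a reformulation of flatness, not a simplification of it. So as written there is a genuine gap.

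The paper closes it by a much more elementary reduction: since $U_A(L)=U_n\otimes_R K$, one has a natural isomorphism $\widehat{U_n}_K\otimes_{U_A(L)}M\cong\widehat{U_n}\otimes_{U_n}M$ for any $U_A(L)$-module $M$, which reduces the desired flatness to flatness of the $\pi$-adic completion $\widehat{U_n}$ over the Noetherian ring $U_n$ --- and that is a standard fact (cited from Berthelot, 3.2.3.(iv)). This avoids Emerton's machinery entirely at this step, and is worth internalizing: flatness of a $\pi$-adic completion of a Noetherian $R$-algebra is the workhorse, not the Banach-algebra flatness results. Your coadmissibility argument (conditions (i) and (ii) via finite generation and flatness of the connecting maps from Theorem \ref{envisFS}) and your concluding Mittag--Leffler step (via Proposition \ref{coadproperties}.(ii)) are both fine and indeed spell out details the paper leaves implicit.
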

\begin{proof}
First note that $\widehat{U_n}_K$ is flat over $U_A(L)$ for each $n$, since $\widehat{U_n}$ is flat over $U_n$ by \cite[3.2.3.(iv)]{Berthelot} and $\widehat{U_n}_K\otimes_{U(L)}M\cong \widehat{U_n}\otimes_{U_n}M$ for each $U(L)$-module $M$.\\
If $0\to M\to M'\to M''\to 0$ is a short exact sequence of finitely generated $U_A(L)$-modules, flatness of $\widehat{U_n}_K$ over $U_A(L)$ ensures the exactness of
\begin{equation*}
0\to \widehat{U_n}_K\otimes M\to \widehat{U_n}_K\otimes M'\to \widehat{U_n}_K\otimes M''\to 0
\end{equation*} 
for each $n$, and the result follows from \cite[0.13.2.4]{EGA}.
\end{proof}

Let $X=\Sp A$ be an affinoid $K$-space, and $\mathscr{L}$ a Lie algebroid on $X$. To a given coadmissible $\wideparen{\mathscr{U}(\mathscr{L})}(X)$-module $M$ we will now associate a $\wideparen{\mathscr{U}(\mathscr{L})}$-module by a form of \textbf{localization}. To achieve this, we first need the correct form of tensor product, as discussed in \cite[section 7]{Ardakov1}.\\  
\\
Given Fr\'echet--Stein algebras $U=\varprojlim U_n$ and $V=\varprojlim V_n$ with compatible maps $U_n\to V_n$ and a coadmissible $U$-module $M$, we can form a coadmissible $V$-module
\begin{equation*}
V\wideparen{\otimes}_U M:=\varprojlim (V_n\otimes_{U_n} M_n)\cong \varprojlim (V_n\otimes_U M).
\end{equation*}
In particular, given an affinoid $K$-space $X$ and a coadmissible $\wideparen{\mathscr{U}(\mathscr{L})}(X)$-module $M$, we can form a presheaf $\Loc M$ defined by
\begin{equation*}
\Loc M (U)=\wideparen{\mathscr{U}(\mathscr{L})}(U)\wideparen{\otimes}_{\wideparen{\mathscr{U}(\mathscr{L})}(X)}M
\end{equation*}
for each affinoid subdomain $U\subseteq X$.\\
In this way we obtain a functor from the category of coadmissible $\wideparen{\mathscr{U}(\mathscr{L})}(X)$-modules to presheaves of $\wideparen{\mathscr{U}(\mathscr{L})}$-modules (we will justify below that $\Loc M$ is indeed a sheaf). Note that for any affinoid subdomain $U\subseteq X$, $\Loc M(U)$ will be a coadmissible $\wideparen{\mathscr{U}(\mathscr{L})}(U)$-module.\\
Our discussion in the previous subsection now immediately proves Theorem \ref{Mainresult}.(iii).
\begin{proposition}
\label{Dcflat}
If $Y\subseteq X$ is an affinoid subdomain of $X$, then $\wideparen{\mathscr{U}(\mathscr{L})}(Y)$ is c-flat over $\wideparen{\mathscr{U}(\mathscr{L})}(X)$, i.e. the functor $\wideparen{\mathscr{U}(\mathscr{L})}(Y)\wideparen{\otimes}_{\wideparen{\mathscr{U}(\mathscr{L})}(X)}-$ is an exact functor from coadmissible $\wideparen{\mathscr{U}(\mathscr{L})}(X)$-modules to coadmissible $\wideparen{\mathscr{U}(\mathscr{L})}(Y)$-modules.
\end{proposition}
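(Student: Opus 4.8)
The plan is to reduce the statement to Theorem~\ref{Dnflat} combined with the standard inverse-limit formalism for Fr\'echet--Stein algebras, so that, as anticipated above, the proposition becomes essentially immediate once the pieces are lined up. Write $U=\wideparen{\mathscr{U}(\mathscr{L})}(X)$ and $V=\wideparen{\mathscr{U}(\mathscr{L})}(Y)$. By Lemma~\ref{finallyac} there is an $n_0$ such that $Y$ is $\pi^n\mathcal{L}$-accessible for all $n\geq n_0$; restricting to such $n$ we obtain compatible Fr\'echet--Stein presentations
\begin{equation*}
U=\varprojlim_{n\geq n_0}U_n,\qquad V=\varprojlim_{n\geq n_0}V_n,\qquad U_n=\mathscr{U}_n(\mathscr{L})(X),\quad V_n=\mathscr{U}_n(\mathscr{L})(Y),
\end{equation*}
together with ring homomorphisms $U_n\to V_n$ (the restriction maps of the sheaf $\mathscr{U}_n(\mathscr{L})$) commuting with the transition maps. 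By Theorem~\ref{Dnflat}, $V_n$ is flat over $U_n$ on both sides for each $n\geq n_0$.

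First I would check that the functor $V\wideparen{\otimes}_U-$ genuinely lands in coadmissible $V$-modules. Given a coadmissible $U$-module $M=\varprojlim M_n$, each $V_n\otimes_{U_n}M_n$ is finitely generated over $V_n$, and the coadmissibility relation for $M$ together with associativity of tensor products gives
\begin{equation*}
V_n\otimes_{V_{n+1}}\bigl(V_{n+1}\otimes_{U_{n+1}}M_{n+1}\bigr)\cong V_n\otimes_{U_{n+1}}M_{n+1}\cong V_n\otimes_{U_n}M_n,
\end{equation*}
so $(V_n\otimes_{U_n}M_n)_n$ is a coadmissible system and $V\wideparen{\otimes}_U M=\varprojlim(V_n\otimes_{U_n}M_n)$ is a well-defined coadmissible $V$-module.

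Next I would take a short exact sequence $0\to M'\to M\to M''\to 0$ of coadmissible $U$-modules and treat it levelwise. By Proposition~\ref{coadproperties}.(i) together with the flatness of $U_n$ over $U$ from the general theory of Fr\'echet--Stein algebras \cite[section 3]{Schneider03}, the functor $M\mapsto M_n=U_n\otimes_U M$ is exact on coadmissible modules, so
\begin{equation*}
0\to M'_n\to M_n\to M''_n\to 0
\end{equation*}
is exact for each $n\geq n_0$, and applying the two-sided flat functor $V_n\otimes_{U_n}-$ of Theorem~\ref{Dnflat} preserves this exactness. Finally I would pass to $\varprojlim_{n\geq n_0}$: by the first step each of the three systems $(V_n\otimes_{U_n}M'_n)_n$, $(V_n\otimes_{U_n}M_n)_n$, $(V_n\otimes_{U_n}M''_n)_n$ underlies a coadmissible $V$-module, hence satisfies the Mittag--Leffler condition of \cite[0.13.2.4]{EGA}, so \cite[0.13.2.3]{EGA} yields exactness of
\begin{equation*}
0\to V\wideparen{\otimes}_U M'\to V\wideparen{\otimes}_U M\to V\wideparen{\otimes}_U M''\to 0,
\end{equation*}
which is the desired c-flatness.

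I do not expect a serious obstacle here: the only new input beyond \cite{Ardakov1} is Theorem~\ref{Dnflat}, which removes the smoothness hypothesis on $\mathcal{L}$ that \cite[Theorem 4.9]{Ardakov1} imposes, and the remaining argument is precisely that of \cite[Theorem 7.7]{Ardakov1}. The one point deserving a word of care is that the Fr\'echet--Stein presentation of $V$ used above (indexed by the lattices $\pi^n\mathcal{L}$ coming from $X$) is cofinal with the one produced by Theorem~\ref{envisFS} from an $(R,\mathcal{B})$-Lie lattice intrinsic to $Y$, so that coadmissibility of $V$-modules and the functor $\wideparen{\otimes}$ do not depend on this choice.
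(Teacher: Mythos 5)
Your argument is correct and is essentially the same as the paper's: the paper's proof is the one-line citation "Theorem~\ref{Dnflat} together with \cite[Proposition 7.5.b)]{Ardakov1}", where the cited proposition carries out precisely the inverse-limit and Mittag--Leffler bookkeeping that you have written out explicitly. The only difference is that you unpack that reference in detail, which is harmless but not a new route.
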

\begin{proof}
This is Theorem \ref{Dnflat} together with \cite[Proposition 7.5.b)]{Ardakov1}.
\end{proof}
Our next aim is to establish a module analogue of Theorem \ref{sheafDn}, i.e. $\Loc M$ should be the inverse limit of sheaves on well chosen sites, with vanishing higher cohomology.\\
\\
We define the analogues of the sheaves $\mathscr{U}_n$ in the module case as follows.\\
If $M$ is a coadmissible $\wideparen{\mathscr{U}(\mathscr{L})}(X)$-module and $\mathcal{M}=\Loc M$, we define the presheaf $\mathcal{M}_n$ on $X_{\ac}(\pi^n\mathcal{L})$ by
\begin{align*}
\mathcal{M}_n(V)&=\mathscr{U}_n(\mathscr{L})(V)\otimes_{\mathscr{U}_n(\mathscr{L})(X)}(\mathscr{U}_n(\mathscr{L})(X)\otimes_{\wideparen{\mathscr{U}(\mathscr{L})}(X)} M)\\
&=\mathscr{U}_n(\mathscr{L})(V)\otimes_{\wideparen{\mathscr{U}(\mathscr{L})}(X)} M.
\end{align*} 
Note that by \cite[7.3]{Ardakov1},
\begin{equation*}
\mathcal{M}_n(V)=\mathscr{U}_n(\mathscr{L})(V)\otimes_{\wideparen{\mathscr{U}(\mathscr{L})}(V)} \mathcal{M}(V),
\end{equation*}
so $\mathcal{M}(V)=\varprojlim \mathcal{M}_n(V)$ by definition of $\wideparen{\otimes}$, and $\mathcal{M}_n(V)$ is a finitely generated $\mathscr{U}_n(\mathscr{L})(V)$-module for every $V$ in $X_{\ac}(\pi^n\mathcal{L})$ by Proposition \ref{coadproperties}.

\begin{theorem}
\label{Mnsheaf}
Let $X=\Sp A$ be an affinoid $K$-space, $\mathscr{L}$ a Lie algebroid on $X$, and let $M$ be a coadmissible $\wideparen{\mathscr{U}(\mathscr{L})}(X)$-module. Fix an affine formal model $\mathcal{A}$ of $A$ and an $(R, \mathcal{A})$-Lie lattice $\mathcal{L}$ in $\mathscr{L}(X)$, and let $\mathscr{U}_n(\mathscr{L})$ be as in Theorem \ref{sheafDn}. \\
Then the presheaf $\mathcal{M}_n$ is a sheaf on $X_{\ac}(\pi^n\mathcal{L})$ and has vanishing higher \v{C}ech cohomology with respect to any $\pi^n\mathcal{L}$-accessible covering.
\end{theorem}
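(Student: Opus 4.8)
The plan is to deduce Theorem~\ref{Mnsheaf} from Theorems~\ref{sheafDn} and~\ref{Dnflat} by a base-change argument, in the same spirit in which Theorem~\ref{sheafDn} was deduced from the exactness of the \v{C}ech complex of $\mathcal{O}_X$. Fix $V=\Sp B\in X_{\ac}(\pi^n\mathcal{L})$ and a $\pi^n\mathcal{L}$-accessible covering $\mathfrak{V}=(V_i)_{i=1}^r$ of $V$, which is finite by Definition~\ref{generalac}. Put $M_n=\mathscr{U}_n(\mathscr{L})(X)\otimes_{\wideparen{\mathscr{U}(\mathscr{L})}(X)}M$. By the definition of $\mathcal{M}_n$ one has $\mathcal{M}_n(W)=\mathscr{U}_n(\mathscr{L})(W)\otimes_{\mathscr{U}_n(\mathscr{L})(X)}M_n$ for every $W\in X_{\ac}(\pi^n\mathcal{L})$, with the restriction morphisms of $\mathcal{M}_n$ being those of $\mathscr{U}_n(\mathscr{L})$ tensored with $\mathrm{id}_{M_n}$. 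Hence the augmented \v{C}ech complex $0\to\mathcal{M}_n(V)\to\oplus_i\mathcal{M}_n(V_i)\to\oplus_{i,j}\mathcal{M}_n(V_{ij})\to\cdots$ is obtained from the augmented \v{C}ech complex $\check{C}^\bullet$ of $\mathscr{U}_n(\mathscr{L})$ with respect to $\mathfrak{V}$ by applying $-\otimes_{\mathscr{U}_n(\mathscr{L})(X)}M_n$, so it suffices to show that this functor preserves the exactness of $\check{C}^\bullet$.

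By Theorem~\ref{sheafDn} the augmented complex $\check{C}^\bullet=\bigl(0\to\mathscr{U}_n(\mathscr{L})(V)\to\oplus_i\mathscr{U}_n(\mathscr{L})(V_i)\to\cdots\bigr)$ is exact — at the first two terms because $\mathscr{U}_n(\mathscr{L})$ is a sheaf, and in positive degrees because its higher \v{C}ech cohomology vanishes — and it is bounded since $\mathfrak{V}$ is finite. Every term of $\check{C}^\bullet$ is of the form $\mathscr{U}_n(\mathscr{L})(W)$ for a $\pi^n\mathcal{L}$-accessible affinoid subdomain $W$ of $X$; for the multiple intersections $W=V_{i_0\cdots i_j}=V_{i_0}\cap\cdots\cap V_{i_j}$ this uses that $X_{\ac}(\pi^n\mathcal{L})$ is a Grothendieck topology, so finite intersections of accessible subdomains are again accessible. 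Thus, by Theorem~\ref{Dnflat}, every term of $\check{C}^\bullet$ is flat as a \emph{right} $\mathscr{U}_n(\mathscr{L})(X)$-module.

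It remains to invoke the elementary fact that an exact, bounded complex of flat right modules over a ring $R'$ stays exact upon applying $-\otimes_{R'}N$ for any left $R'$-module $N$: splitting $\check{C}^\bullet$ into short exact sequences $0\to Z^j\to\check{C}^j\to Z^{j+1}\to 0$, a descending induction from the top term shows that each $Z^j$ is flat, so each such short exact sequence remains exact after $-\otimes_{R'}N$, and splicing them back gives exactness of $\check{C}^\bullet\otimes_{R'}N$. Applying this with $R'=\mathscr{U}_n(\mathscr{L})(X)$ and $N=M_n$ yields the exactness of the augmented \v{C}ech complex of $\mathcal{M}_n$, which is precisely the assertion that $\mathcal{M}_n$ is a sheaf on $X_{\ac}(\pi^n\mathcal{L})$ with vanishing higher \v{C}ech cohomology.

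I do not expect any serious obstacle: the argument is purely formal once Theorems~\ref{sheafDn} and~\ref{Dnflat} are in hand. The two points that genuinely require a word of justification are that all \v{C}ech terms $\mathscr{U}_n(\mathscr{L})(V_{i_0\cdots i_j})$ are flat over $\mathscr{U}_n(\mathscr{L})(X)$ — which rests on $X_{\ac}(\pi^n\mathcal{L})$ being closed under finite intersections — and the left/right bookkeeping: Theorem~\ref{Dnflat} supplies flatness on both sides, so tensoring the complex $\check{C}^\bullet$ of right $\mathscr{U}_n(\mathscr{L})(X)$-modules with the left module $M_n$ is legitimate.
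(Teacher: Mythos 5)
Your argument is correct, and it takes a genuinely different homological route from the paper's. The paper mimics the argument of \cite[Corollary 8.2.1/5]{BGR}: it chooses a finite presentation $0 \to N' \to \mathscr{U}_n(\mathscr{L})(X)^{\oplus k} \to N \to 0$ (available since $N=M_n$ is finitely generated and $\mathscr{U}_n(\mathscr{L})(X)$ is Noetherian), uses Theorem \ref{Dnflat} to tensor this into a short exact sequence of augmented \v{C}ech complexes, and then runs a dimension-shifting induction: the long exact sequence and Theorem \ref{sheafDn} give $\check{\mathrm{H}}^i(\mathscr{U}_n\otimes N)\cong \check{\mathrm{H}}^{i+1}(\mathscr{U}_n\otimes N')$, and iterating kills all cohomology. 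You instead apply flatness to the \v{C}ech complex itself rather than to the module: since $\check{C}^\bullet=\check{C}^\bullet_{\mathrm{aug}}(\mathfrak{V}, \mathscr{U}_n)$ is a bounded exact complex whose terms are flat right $\mathscr{U}_n(\mathscr{L})(X)$-modules (Theorem \ref{Dnflat}, using that $X_{\ac}(\pi^n\mathcal{L})$ is closed under finite intersections), the standard descending-induction argument shows every syzygy $Z^j=\ker(\check{C}^j\to\check{C}^{j+1})$ is flat, whence $-\otimes_{\mathscr{U}_n(\mathscr{L})(X)} M_n$ preserves exactness. Both proofs rely on exactly the same two ingredients (Theorems \ref{sheafDn} and \ref{Dnflat}); the payoff of your version is that it makes no use of Noetherianity or finite generation of $M_n$, so it establishes the vanishing for an arbitrary left $\mathscr{U}_n(\mathscr{L})(X)$-module $N$, whereas the paper's induction needs $N$ finitely generated to run.
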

\begin{proof}
This is a straightforward variant of \cite[Corollary 8.2.1/5]{BGR}. We prove the slightly more general statement that for any finitely generated $\mathscr{U}_n(\mathscr{L})(X)$-module $N$, the presheaf 
\begin{equation*}
V\mapsto \mathscr{U}_n(\mathscr{L})(V)\otimes_{\mathscr{U}_n(\mathscr{L})(X)} N
\end{equation*}
is a sheaf on $X_{\ac}(\pi^n\mathcal{L})$ with vanishing higher cohomology groups.\\
Since $N$ is finitely generated, we have a short exact sequence
\begin{equation*}
0 \to N'\to \mathscr{U}_n(\mathscr{L})(X)^{\oplus k} \to N\to 0,
\end{equation*}
where $N'$ is also a finitely generated $\mathscr{U}_n(\mathscr{L})(X)$-module by Noetherianity. Now given a finite $\pi^n\mathcal{L}$-accessible covering $\mathfrak{V}$, Theorem \ref{Dnflat} implies that we have short exact sequences
\begin{equation*}
0 \to \check{C}^\bullet (\mathscr{U}_n\otimes N')\to \check{C}^\bullet(\mathscr{U}_n)^{\oplus k} \to \check{C}^\bullet (\mathscr{U}_n\otimes N)\to 0,
\end{equation*}
where we abbreviate $\check{C}_{\text{aug}}^i(\mathfrak{V}, M)$ to $\check{C}^i(M)$. Taking the corresponding long exact sequence of cohomology, it follows from the vanishing of the cohomology in the middle term (Theorem \ref{sheafDn}) that we obtain isomorphisms
\begin{equation*}
\mathrm{\check{H}}^i(\mathscr{U}_n\otimes N)\cong \mathrm{\check{H}}^{i+1}(\mathscr{U}_n \otimes N').
\end{equation*}
Since $N'$ was also finitely generated, it follows by an inductive argument that the augmented \v{C}ech complex is exact, as required.
\end{proof}
In particular, we see as before that $\Loc M$ is a sheaf on $X_w$, extending uniquely to a sheaf on $X_{\rig}$, and has vanishing higher \v{C}ech cohomology with respect to any (finite) affinoid covering. We have thus proved Theorem \ref{Mainresultmodule}.(ii).\\
\\
Now let $X$ be a rigid analytic $K$-space, $\mathscr{L}$ a Lie algebroid on $X$. In analogy with coherent $\mathcal{O}_X$-modules, we call a $\wideparen{\mathscr{U}(\mathscr{L})}$-module $\mathcal{M}$ \textbf{coadmissible} if there exists an admissible covering $\mathfrak{U}=(U_i)_i$ of $X_w$ by affinoid spaces such that for each $i$, the following is satisfied:
\begin{enumerate}[(i)]
\item $\mathcal{M}(U_i)$ is a coadmissible $\wideparen{\mathscr{U}(\mathscr{L})}(U_i)$-module.
\item The natural morphism $\Loc (\mathcal{M}(U_i))\to \mathcal{M}|_{U_i}$ is an isomorphism.
\end{enumerate}
If $\mathcal{M}$ satisfies the above for a certain admissible covering $\mathfrak{U}$, then we say $\mathcal{M}$ is $\mathfrak{U}$-coadmissible.\\
\\
We also mention a theorem mirroring the classical result for coherent $\mathcal{O}_X$-modules, as given in Theorem \ref{Mainresultmodule}.(i) in the introduction.
\begin{theorem}[see {\cite[Theorem 8.4]{Ardakov1}}]
\label{Kiehlloccoad}
Let $X$ be an affinoid $K$-space, $\mathfrak{U}$ an admissible covering in $X_w$, and let $\mathcal{M}$ be a $\mathfrak{U}$-coadmissible $\wideparen{\mathscr{U}(\mathscr{L})}$-module. Then
\begin{equation*}
\mathcal{M}\cong \Loc (\mathcal{M}(X)).
\end{equation*} 
\end{theorem}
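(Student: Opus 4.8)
The plan is to reduce to a finite affinoid covering and then, level by level, invoke the Kiehl-type argument of \cite[\S 8]{Ardakov1}, feeding in the lattice-free results established above (Theorems \ref{sheafDn}, \ref{Dnflat}, \ref{envisFS}, \ref{Mnsheaf} and Proposition \ref{Dcflat}) in place of those that previously required a smooth Lie lattice. First I would use that $X_w$ is quasi-compact to replace $\mathfrak{U}$ by a finite refinement $\mathfrak{V}=(V_j)$ by affinoid subdomains; if $V_j\subseteq U_{i(j)}$, then $\mathcal{M}|_{V_j}$ is the restriction of $\Loc(\mathcal{M}(U_{i(j)}))$, which by transitivity of $\wideparen{\otimes}$ \cite[\S 7]{Ardakov1} is again $\Loc$ of the coadmissible module $\wideparen{\mathscr{U}(\mathscr{L})}(V_j)\wideparen{\otimes}_{\wideparen{\mathscr{U}(\mathscr{L})}(U_{i(j)})}\mathcal{M}(U_{i(j)})$, coadmissibility coming from Proposition \ref{Dcflat} applied with ambient space $U_{i(j)}$. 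So $\mathcal{M}$ is $\mathfrak{V}$-coadmissible, and we may assume $\mathfrak{U}$ finite. Fixing an affine formal model $\mathcal{A}$ and an $(R,\mathcal{A})$-Lie lattice $\mathcal{L}$ in $\mathscr{L}(X)$, Lemma \ref{finallyac} provides $n_0$ so that $\mathfrak{U}$ and all its finite intersections lie in $X_{\ac}(\pi^n\mathcal{L})$ for $n\geq n_0$.

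Next I would, for each $n\geq n_0$, glue the local data into a sheaf $\mathcal{M}_n$ on $X_{\ac}(\pi^n\mathcal{L})$. Setting $\mathcal{M}_n(U_i):=\mathscr{U}_n(\mathscr{L})(U_i)\otimes_{\wideparen{\mathscr{U}(\mathscr{L})}(U_i)}\mathcal{M}(U_i)$, this is a finitely generated $\mathscr{U}_n(\mathscr{L})(U_i)$-module, and by Proposition \ref{coadproperties}.(i) it is the $n$-th term of the coadmissible module $\mathcal{M}(U_i)$, so $\mathcal{M}(U_i)=\varprojlim_n\mathcal{M}_n(U_i)$. By Theorem \ref{Mnsheaf} (applied on $U_i$ with its induced lattice, via Lemma \ref{Dnrestrict}), $V\mapsto\mathscr{U}_n(\mathscr{L})(V)\otimes_{\mathscr{U}_n(\mathscr{L})(U_i)}\mathcal{M}_n(U_i)$ is a sheaf on the corresponding accessible site of $U_i$ with vanishing higher \v{C}ech cohomology; using \cite[7.3]{Ardakov1} together with $\mathcal{M}|_{U_i}\cong\Loc(\mathcal{M}(U_i))$, one checks these sheaves are canonically identified over the overlaps $U_{ij}$, hence glue to a sheaf $\mathcal{M}_n$ on $X_{\ac}(\pi^n\mathcal{L})$ with $\mathcal{M}(V)=\varprojlim_n\mathcal{M}_n(V)$ for every affinoid subdomain $V$.

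Finally I would apply a Kiehl-type theorem to the Banach sheaf $\mathscr{U}_n(\mathscr{L})$ on $X_{\ac}(\pi^n\mathcal{L})$: since $\mathcal{M}_n$ is, on the finite covering $\mathfrak{U}$, locally the localization of a finitely generated $\mathscr{U}_n(\mathscr{L})(U_i)$-module, and the only inputs to Kiehl's argument are the flatness of the restriction maps of $\mathscr{U}_n(\mathscr{L})$ (Theorem \ref{Dnflat}), the Noetherianity of the sections, and the vanishing of the higher \v{C}ech cohomology of $\mathscr{U}_n(\mathscr{L})$ (Theorem \ref{sheafDn}) -- all now available without a smooth Lie lattice -- one obtains that $\mathcal{M}_n(X)$ is a finitely generated $\mathscr{U}_n(\mathscr{L})(X)$-module and that $\mathscr{U}_n(\mathscr{L})(V)\otimes_{\mathscr{U}_n(\mathscr{L})(X)}\mathcal{M}_n(X)\to\mathcal{M}_n(V)$ is an isomorphism for all $V\in X_{\ac}(\pi^n\mathcal{L})$. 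Flatness of $\mathscr{U}_n(\mathscr{L})(X)$ over $\mathscr{U}_{n+1}(\mathscr{L})(X)$ (Theorem \ref{envisFS} and the remark following it) together with the sheaf property then gives $\mathscr{U}_n(\mathscr{L})(X)\otimes_{\mathscr{U}_{n+1}(\mathscr{L})(X)}\mathcal{M}_{n+1}(X)\cong\mathcal{M}_n(X)$, so $M:=\mathcal{M}(X)=\varprojlim_n\mathcal{M}_n(X)$ is a coadmissible $\wideparen{\mathscr{U}(\mathscr{L})}(X)$-module with $n$-th term $\mathcal{M}_n(X)$; passing to the inverse limit over $n$ in the displayed isomorphisms yields $\Loc M(V)=\wideparen{\mathscr{U}(\mathscr{L})}(V)\wideparen{\otimes}_{\wideparen{\mathscr{U}(\mathscr{L})}(X)}M\cong\varprojlim_n\mathcal{M}_n(V)=\mathcal{M}(V)$, compatibly with restriction, i.e. $\mathcal{M}\cong\Loc(\mathcal{M}(X))$.

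The main obstacle is the Kiehl-type statement for $\mathscr{U}_n(\mathscr{L})$: one must check that $X_{\ac}(\pi^n\mathcal{L})$ is refined by the $\mathcal{L}$-accessible rational subdomains, so that Kiehl's two-step induction (reduce to a Laurent covering, then bootstrap through iterated rational localizations) applies -- this is exactly the combinatorial structure recalled above from \cite{Ardakov1}. A secondary point is verifying in the second step that the level-$n$ modules $\mathcal{M}_n(U_i)$ arising from the fixed lattice $\mathcal{L}$ on $X$ really are the canonical Fr\'echet--Stein pieces of $\mathcal{M}(U_i)$; this rests on the remark after Theorem \ref{envisFS} that the Fr\'echet--Stein structure of $\wideparen{\mathscr{U}(\mathscr{L})}(U_i)$ is exhibited by any $(R,\cdot)$-Lie lattice, in particular by the image of $\pi^n\mathcal{L}$.
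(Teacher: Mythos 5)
Your proposal is correct and follows exactly the route the paper intends: the paper simply cites \cite[Theorem 8.4]{Ardakov1} and asserts that the argument carries over once the smooth-Lie-lattice hypotheses are replaced by the lattice-free inputs (Theorems \ref{sheafDn}, \ref{Dnflat}, \ref{envisFS}, \ref{Mnsheaf} and Proposition \ref{Dcflat}), which is precisely what you spell out. Your reduction to a finite covering, the level-$n$ gluing via Lemma \ref{Dnrestrict} and the remark after Theorem \ref{envisFS}, the Kiehl-type step for $\mathscr{U}_n(\mathscr{L})$, and the passage to the inverse limit all track the structure of Ardakov--Wadsley's proof.
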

The proof is as in \cite[Theorem 8.4]{Ardakov1}, where the result is given under the assumption that $X$ admits a smooth Lie lattice.\\
Note that the theorem implies that if $\mathcal{M}$ is a $\mathfrak{U}$-coadmissible $\wideparen{\mathscr{U}(\mathscr{L})}$-module on some smooth rigid analytic $K$-space $X$, then $\mathcal{M}$ is coadmissible with respect to any affinoid covering.

\end{document}